\newtheorem{dfn}{Definition}[section]
\newtheorem{thm}[dfn]{Theorem}
\newtheorem{prop}[dfn]{Proposition}
\newtheorem{lem}[dfn]{Lemma}
\newtheorem{cor}[dfn]{Corollary}
\numberwithin{equation}{section}
\title[Stable determination of the potential in the high frequency limit]{Stable determination of the potential for the Helmholtz equation in the high frequency limit from boundary measurements}
\author{Mourad Choulli}
\address{Universit\'{e} de Lorraine, 34 cours L\'{e}opold, 54052 Nancy cedex, France}
\email{mourad.choulli@univ-lorraine.fr}
\author{Hiroshi Takase}
\address{Institute of Mathematics for Industry, Kyushu University, 744 Motooka, Nishi-ku, Fukuoka 819-0395, Japan}
\email{htakase@imi.kyushu-u.ac.jp}
\date{\today}
\keywords{Helmholtz equation in the high frequency limit, partial Dirichlet-to-Neumann map, stability inequality, quantitative uniqueness of continuation, quantitative Runge approximation}
\subjclass[2020]{35R30, 35J05, 35J10}
\begin{document}
\begin{abstract}
We establish a triple logarithmic stability estimate of determining the potential in a Helmholtz equation from a partial Dirichlet-to-Neumann map in the high frequency limit. This estimate is proved under the assumption that the potential is known near the boundary of a domain when the dimension is greater than or equal to $3$. In addition, we show a triple logarithmic stability for an interior impedance problem.
\end{abstract}
\maketitle

\section{Introduction and main result}\label{section1}

Let $n\ge 3$ be an integer, $\Omega\subset \mathbb{R}^n$ be a $C^{1,1}$ bounded domain and $(g_{jk})\in W^{1,\infty}(\Omega;\mathbb{R}^{n\times n})$ be a symmetric matrix-valued function satisfying, for some $\delta_0>0$
\[
\sum_{j,k=1}^n g_{jk}\eta^j\eta^k\ge\delta_0|\eta|^2,\quad x\in\Omega,\; \eta=(\eta^1,\ldots,\eta^n)\in\mathbb{R}^n.
\]
Note that $(g^{jk})$ the matrix inverse to $(g_{jk})$ is uniformly positive definite as well. Throughout this paper, $\Delta_g$ denotes the Laplace-Beltrami operator associated with the metric tensor $g=\sum_{j,k=1}^ng_{jk}dx^j\otimes dx^k$ given by
\[
\Delta_g u:=\left(1/{\sqrt{|g|}}\right)\sum_{j,k=1}^n\partial_j\left(\sqrt{|g|}g^{jk}\partial_k u\right),
\]
where $|g|:=\mbox{det}(g)$. When $g=\mathbf{I}$, the flat metric, we write simply $\Delta$ for the standard Laplacian $\Delta_\mathbf{I}$. 

The Riemannian measures on $\Omega$ and $\partial \Omega$ will denoted respectively by $d\mu:=\sqrt{|g|}dx$ and $ds:=\sqrt{|g|}d\sigma$, where $d\sigma$ is the surface measure on $\partial \Omega$.

For $q\in L^\infty(\Omega;\mathbb{R})$, consider the operator
\[
A_q u:=(-\Delta_g+q)u,\quad u\in H^2(\Omega)\cap H^1_0(\Omega).
\]
Let $\sigma(A_q)$ denotes the  spectrum of $A_q$. It is well known that $\sigma(A_q)$ consists of eigenvalues. Recall that $R_q(\lambda):=(A_q-\lambda)^{-1}$, the resolvent of $A_q$, belongs to $\mathscr{B}(L^2(\Omega))$ for all $\lambda\in \rho(A_q):=\mathbb{C}\setminus\sigma(A_q)$, the resolvent set of $A_q$.

Fix $q_0\in L^\infty(\Omega;\mathbb{R})$ and $\kappa_0>0$ arbitrarily and set $\kappa:=\|q_0\|_{L^\infty(\Omega)}+\kappa_0$. Define for $\lambda\in \rho(A_{q_0})$
\[
\mathscr{Q}_\lambda:=\left\{q=q_0+q'\in L^\infty(\Omega;\mathbb{R});\; \|q'\|_{L^\infty(\Omega)}<\min\left(\|R_{q_0}(\lambda)\|^{-1},\kappa_0\right)\right\},
\]
where $\|R_{q_0}(\lambda)\|$ is the operator norm of $R_{q_0}(\lambda)$ in $\mathscr{B}(L^2(\Omega))$. For all $\lambda\in\rho(A_{q_0})$ and $q\in\mathscr{Q}_\lambda$, we verify that $\|q\|_{L^\infty(\Omega)}\le \kappa$ and 
\[
\lambda\in\rho(A_{q_0})\Longrightarrow\lambda\in \bigcap_{q\in\mathscr{Q}_\lambda}\rho(A_q)
\]
(see the proof of Lemma \ref{Lem2.1_CAC_2023}). In particular, $R_q(\lambda)\in \mathscr{B}(L^2(\Omega))$ for all $\lambda\in\rho(A_{q_0})$ and $q\in\mathscr{Q}_\lambda$. Set for $\lambda\in\rho(A_{q_0})$
\[
\sigma_\lambda:=\bigcup_{q\in\mathscr{Q}_\lambda}\sigma(A_q)
\]
and
\[
\mathbf{e}_\lambda:=\max\left(\frac{1}{\mbox{dist}(\lambda,\sigma_\lambda)},1\right)(\ge 1).
\]

In the rest of this text, $\Gamma$ and $\Sigma$ will be two closed subsets of $\partial\Omega$ with nonempty interior, and $\Omega_0\Subset\Omega$ will denote a $C^{1,1}$ open set such that $\Omega_1:=\Omega\setminus \overline{\Omega_0}$ is connected.

Define the closed subspace of $H^{3/2}(\partial\Omega)$ given by
\[
H^{3/2}_\Gamma(\partial\Omega):=\left\{\varphi\in H^{3/2}(\partial\Omega);\; \mbox{supp}(\varphi)\subset \Gamma\right\}
\]
This space is endowed with the norm of $H^{3/2}(\partial\Omega)$. Also, define
\[
H^{1/2}(\Sigma):=\{\varphi=h_{|\Sigma};\; h\in H^{1/2}(\partial\Omega)\}
\]
endowed with the quotient norm
\[
\|\varphi\|_{H^{1/2}(\Sigma)}:=\inf\{\|h\|_{H^{1/2}(\partial\Omega)};\; h_{|\Sigma}=\varphi\}.
\]

For all  $\lambda\in\rho(A_{q_0})$, $q\in\mathscr{Q}_\lambda$ and $\varphi\in H^{3/2}_\Gamma(\partial\Omega)$, the BVP
\begin{equation}\label{BVP}
\begin{cases}
(-\Delta_g+q-\lambda)u=0\quad \text{in}\; \Omega,
\\
u_{|\partial\Omega}=\varphi.
\end{cases}
\end{equation}
admits a unique solution $u=u_{q,\lambda}(\varphi)\in H^2(\Omega)$. Define the partial Dirichlet-to-Neumann maps $\Lambda_{q,\lambda}$ given by
\[
\Lambda_{q,\lambda}\colon H^{3/2}_\Gamma(\partial\Omega)\ni \varphi\mapsto \partial_{\nu_g} u_{q,\lambda}(\varphi)_{|\Sigma}\in H^{1/2}(\Sigma),
\]
where
\[
\partial_{\nu_g}=|\nu|_g^{-1}\sum_{j,k=1}^n \nu_jg^{jk}\partial_k,\quad |\nu|_g:=\sqrt{\sum_{j,k=1}^n g^{jk}\nu_j\nu_k},
\]
$\nu=(\nu_1,\ldots ,\nu_n)$ being the unit normal vector field to $\partial \Omega$.

From the usual elliptic $H^2(\Omega)$ a priori estimates, there exist two constants $c_0=c_0(n,\Omega,g)>0$ and $C_0=C_0(n,\Omega,g,\kappa,\lambda)>0$ such that
\begin{align*}
&\|\partial_{\nu_g} u_{q,\lambda}(\varphi)_{|\Sigma}\|_{H^{1/2}(\Sigma)} \le \|\partial_{\nu_g} u_{q,\lambda}(\varphi)\|_{H^{1/2}(\partial \Omega)}
\\
&\hskip 5cm \le c_0\|u_{q,\lambda}(\varphi)\|_{H^2(\Omega)}\le C_0\|\varphi\|_{H^{3/2}(\partial \Omega)}.
\end{align*}
In other words, $\Lambda_{q,\lambda}\in\mathscr{B}(H^{3/2}_\Gamma(\partial\Omega); H^{1/2}(\Sigma))$.

We are mainly interested in the problem of determining $q$ from $\Lambda_{q,\lambda}$. We establish a stability inequality that explicitly states the dependence of the positive spectral parameter $\lambda$ on the constants.

Set
\[
\mathbf{b}_\lambda:=\sqrt{2\cosh(\sqrt{\lambda}/2)}\left(\ge \sqrt{2}\right),\quad \lambda >0.
\]

For all $c>0$, set $\mathfrak{e}(c)=e^{e^{e^c}}$, $L(r)=\log \log \log r$, $r>\mathfrak{e}(c)$ and define $\Phi_c\colon(0,\infty)\rightarrow\mathbb{R}$ by
\[
\Phi_{c} (r):=r^{-1}\chi_{\left]0,\mathfrak{e}(c)\right]}(r)+L(r)^{-2/(n+2)}\chi_{\left]\mathfrak{e}(c),\infty\right[}(r),
\]
where $\chi_J$ denotes the characteristic function of an interval $J\subset\mathbb{R}$.


The results in Sections \ref{section2}, \ref{section3} and \ref{section4} hold for nonflat metric $g$. However, in the proof of Section \ref{section5}, we assume flatness $g=\mathbf{I}$ in order to use the complex geometric optics solutions constructed in Appendix \ref{sectionC}. Therefore, in the first main result described next, we assume $g=\mathbf{I}$. Note that although the Dirichlet-to-Neumann map $\Lambda_{q,\lambda}$ and the function spaces $\mathscr{S}^j_{q,\lambda}$, $j=0,1$ appearing in Section \ref{section3} depend on $g$, we have avoided explicitly stating this to avoid cumbersome notation.

\begin{thm}\label{triple_log_stability}
Let $n\ge 3$ be an integer. Assume $g=\mathbf{I}$ and set $\varsigma=(n,\Omega,\Omega_0,\kappa)$. There exist constants $C=C(\varsigma,\Gamma,\Sigma)>0$, $c=c(\varsigma,\Gamma)>0$ and $\theta=\theta(\varsigma,\Sigma)\in(0,1)$ such that for all $\lambda\in\rho(A_{q_0})$ so that $\lambda\ge 1$ and for all $q_j\in\mathscr{Q}_\lambda$, $j=1,2$, satisfying $q_1=q_2$ in $\Omega_1$ we have
\[
C\|q_1-q_2\|_{H^{-1}(\Omega)}
\le \lambda^5\mathbf{e}_\lambda^3\mathbf{b}_\lambda\Phi_{c}\left(\|\Lambda_{q_1,\lambda}-\Lambda_{q_2,\lambda}\|^{-\theta}\right).
\]
\end{thm}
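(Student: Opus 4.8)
The plan is to combine three ingredients in the standard sequence for Calderón-type problems with a single boundary measurement at fixed (large) frequency: (i) a boundary-determination/stability step that converts the difference of partial DtN maps into a bound on the difference of solutions in the interior of $\Omega$; (ii) a quantitative Runge-type approximation that lets us replace the special solutions by complex geometric optics (CGO) solutions of $(-\Delta + q_j - \lambda)u_j = 0$; and (iii) the usual integral identity plus Fourier estimate recovering $q_1 - q_2$ in $H^{-1}(\Omega)$. Concretely, for $\varphi_1, \varphi_2 \in H^{3/2}_\Gamma(\partial\Omega)$ with solutions $u_1 = u_{q_1,\lambda}(\varphi_1)$ and $u_2 = u_{q_2,\lambda}(\varphi_2)$, one has the Alessandrini-type identity
\[
\int_\Omega (q_1 - q_2)\, u_1 \overline{u_2}\, dx = \int_\Sigma (\Lambda_{q_1,\lambda} - \Lambda_{q_2,\lambda})\varphi_1 \, \overline{\varphi_2}\, ds + (\text{terms on } \partial\Omega\setminus\Sigma),
\]
and since $q_1 = q_2$ on $\Omega_1 = \Omega\setminus\overline{\Omega_0}$, the left side is an integral over $\Omega_0\Subset\Omega$ only. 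The boundary pieces outside $\Sigma$ are the obstruction: they are controlled by a quantitative unique continuation from $\Sigma$ (three-sphere / Carleman inequality), which is exactly where the \emph{double} logarithm enters, and by the $H^2$ a priori bounds recalled in the excerpt, which is where the factors $\lambda^5 \mathbf{e}_\lambda^3 \mathbf{b}_\lambda$ get tracked.

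The second and technically heaviest step is to make the CGO solutions usable. Build $u_j = e^{\rho_j\cdot x}(1 + \psi_j)$ with $\rho_j \in \mathbb{C}^n$, $\rho_j\cdot\rho_j = -\lambda$ (so that $(-\Delta-\lambda)e^{\rho_j\cdot x}=0$), and $\rho_1 = \rho_1(\xi,\tau)$, $\rho_2 = \rho_2(\xi,\tau)$ chosen so that $\rho_1 + \overline{\rho_2} = i\xi$ for a prescribed frequency $\xi \in \mathbb{R}^n$ and a large parameter $\tau$; the correction terms $\psi_j$ solve a conjugated equation and satisfy $\|\psi_j\|_{L^2} = O(\tau^{-1})$ uniformly, but now with constants depending on $\lambda$ through $|\rho_j| \sim \sqrt{\tau^2 + \lambda}$ — this is the source of the $\mathbf{b}_\lambda = \sqrt{2\cosh(\sqrt\lambda/2)}$ factor, which is essentially $\sup_{|x|\le \operatorname{diam}\Omega/2}e^{|\mathrm{Re}\,\rho_j||x|}$ type growth when $\tau$ is taken comparable to $\sqrt\lambda$. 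Because these CGO solutions do not have support in $\Gamma$, one invokes the quantitative Runge approximation (cited in the keywords) to approximate $u_j|_{\Omega_0}$ by genuine solutions $u_{q_j,\lambda}(\varphi_j)$ with $\varphi_j \in H^{3/2}_\Gamma(\partial\Omega)$, paying a logarithmic price $\Phi_c$ in the approximation rate; this is the \emph{third} logarithm, and the exponent $2/(n+2)$ in $L(r)^{-2/(n+2)}$ comes from optimizing the Runge-rate against the $\tau$-growth of the CGO corrector when one finally plugs into the Fourier transform estimate
\[
|\widehat{(q_1-q_2)\chi_{\Omega_0}}(\xi)| \le C\!\left(\tau^{-1}\mathbf{b}_\lambda + e^{C\tau}\|\Lambda_{q_1,\lambda}-\Lambda_{q_2,\lambda}\|^{\theta'}\right),
\]
valid for $|\xi|\le \tau$. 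Splitting $\|q_1-q_2\|_{H^{-1}}^2 = \int_{|\xi|\le R} + \int_{|\xi|>R}$, bounding the high-frequency tail by $R^{-1}$ (using the a priori $L^\infty$, hence $L^2$, bound $\le 2\kappa$), and then optimizing in $R$ and $\tau$ produces the triple-logarithmic right-hand side.

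I expect the main obstacle to be the \textbf{uniform-in-$\lambda$ bookkeeping}: all of the constructions above (CGO correctors, unique continuation, Runge approximation, elliptic regularity) are classical at $\lambda = 0$, but here every constant must be made explicit in $\lambda$, in $\mathbf{e}_\lambda = \max(1/\operatorname{dist}(\lambda,\sigma_\lambda),1)$, and in $\mathbf{b}_\lambda$, and then these growth factors must be balanced \emph{against} the logarithmic gains from the two stability/approximation steps without destroying the $\Phi_c$ structure. In particular, the Runge approximation step must be proved quantitatively with a norm estimate that depends on $\lambda$ only polynomially (the $\lambda^5\mathbf{e}_\lambda^3$) times the $\mathbf{b}_\lambda$ from the CGO side; getting the \emph{polynomial} (rather than exponential) dependence on $\lambda$ and $\mathbf{e}_\lambda$ here, via careful use of the resolvent bound built into the definition of $\mathscr{L}_\lambda$, is the delicate point. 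The reduction to $g = \mathbf{I}$ (assumed in this theorem) removes the extra difficulty of constructing CGO for variable metrics, so the argument can use the classical flat-space Faddeev-type solutions throughout.
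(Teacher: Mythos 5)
Your overall architecture is the same as the paper's: (i) a quantitative unique continuation estimate with explicit $\lambda$-dependence, (ii) a quantitative Runge approximation built on it, (iii) CGO solutions of $(-\Delta+q-\lambda)u=0$ with $\xi\cdot\xi=\lambda$, and (iv) a Fourier estimate plus parameter optimization. You also correctly identify the central difficulty as uniform-in-$\lambda$ bookkeeping. However, there are two concrete mismatches that are more than cosmetic.

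First, you misattribute the factor $\mathbf{b}_\lambda=\sqrt{2\cosh(\sqrt\lambda/2)}$. It does \emph{not} come from the growth of the CGO correctors: the CGO estimates in the paper give $\|u_\xi\|_{L^2(\Omega_0)}\lesssim e^{\varkappa|\Im\xi|}$ and $\|w_\xi\|_{L^2}\lesssim|\Im\xi|^{-1}$, which produce the $e^{\varkappa(|\eta|+2\tau)}$ factor but no $\mathbf{b}_\lambda$. Instead, $\mathbf{b}_\lambda$ enters through the \emph{local quantitative unique continuation} used in the integral inequality (Theorem \ref{Thm2.2_CAC_2023}): the paper lifts the $\lambda$-dependent operator $-\Delta-\lambda+q$ to the $\lambda$-free operator $-\Delta-\partial_t^2+q$ on $\Omega\times(0,1)$ by setting $v=e^{\sqrt\lambda t}u$, applies a Carleman-type three-ball inequality there, and the conjugation produces the ratio $\sinh\sqrt\lambda/\sinh(\sqrt\lambda/2)=\mathbf{b}_\lambda^2$. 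This $e^{\sqrt\lambda t}$ lift is the central device for tracking the high-frequency parameter through all the UCP steps, and your sketch never mentions it. Without some such device you would get exponential (not polynomial) loss in $\lambda$ from the Carleman estimates.

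Second, the intermediate decomposition you propose (a partial-data Alessandrini identity with explicit boundary terms on $\partial\Omega\setminus\Sigma$, to be controlled by UCP) is not what the paper does, and it is not clear it can be made to work cleanly. The paper's integral inequality (Proposition \ref{integral_inequality}) instead sets $v:=v_2-\widetilde v_2=R_{q_1}(\lambda)((q_1-q_2)v_2)$, multiplies by a cutoff $\psi\in C_0^\infty(\Omega)$ with $\psi\equiv 1$ near $\overline{\Omega_0}$, and applies Green's formula to get $\int_{\Omega_0}(q_1-q_2)v_2v_1\,d\mu=\int_\Omega[\Delta,\psi]v\,v_1\,d\mu$. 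The commutator is supported in a compact subset $U\Subset\Omega_1$, and the local UCP (Theorem \ref{Thm2.2_CAC_2023}) bounds $\|v\|_{H^1(U)}$ by $\|v\|_{H^1(\Omega_1)}$ and $\|\partial_{\nu}v\|_{L^2(\Sigma)}$; the latter is controlled by $\|\Lambda_{q_1,\lambda}-\Lambda_{q_2,\lambda}\|$ since $v_2|_{\partial\Omega}\in H^{3/2}_\Gamma$. There is simply no uncontrolled boundary term on $\partial\Omega\setminus\Sigma$ to deal with. Your proposed route would require you to estimate such terms directly, which is strictly harder.

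Third, and most importantly for the final shape of the inequality, your displayed Fourier estimate
\[
|\widehat{(q_1-q_2)\chi_{\Omega_0}}(\eta)|\lesssim \tau^{-1}\mathbf{b}_\lambda+e^{C\tau}\|\Lambda_{q_1,\lambda}-\Lambda_{q_2,\lambda}\|^{\theta'}
\]
involves only the CGO parameter $\tau$ and the spectral cutoff $R$. That is a \emph{two}-parameter optimization, which can only yield a double logarithm. The paper's corresponding inequality carries a third free parameter $\varepsilon$ coming from the quantitative UCP/Runge step, giving an extra term of the form $\lambda^{9/2}\varepsilon^{1/8}e^{\varkappa(|\eta|+2\tau)}$ together with a $\mathbf{k}_\lambda e^{e^{c/\varepsilon}}e^{\varkappa(|\eta|+2\tau)}\|\Lambda_{q_1,\lambda}-\Lambda_{q_2,\lambda}\|^\theta$ term. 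The double-exponential factor $e^{e^{c/\varepsilon}}$ from UCP is exactly what forces one additional logarithm; after choosing $\varepsilon=\tau^{-16/(n+2)}e^{-4\varkappa\tau}$ and $s=\tau^{2/(n+2)}$ and then optimizing in $\tau$ against $\mathfrak{C}=\|\Lambda_{q_1,\lambda}-\Lambda_{q_2,\lambda}\|^\theta$, one gets $L(1/\mathfrak{C})^{-2/(n+2)}$. Your sketch gestures at a ``logarithmic price'' from Runge but never makes $\varepsilon$ explicit in the Fourier estimate, and without it you would arrive at a double, not triple, logarithmic modulus.
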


We emphasize that the condition $\lambda \ge 1$ can be replaced by $\lambda\ge \lambda_0$, where $\lambda_0$ is arbitrarily fixed. In this case, the constant $C$ in the statement of Theorem \ref{triple_log_stability} also depends on $\lambda_0$.

Theorem \ref{triple_log_stability} gives a precise dependence of the stability constant on the sufficiently large spectral parameter $\lambda$. It is worth noting that the stability deteriorates not only for high frequency but also when the square of the frequency is close to $\sigma_\lambda$. The difference between the stability in Theorem \ref{triple_log_stability} and those obtained in \cite{GarciaFerrero2022, Krupchyk2019, Ruland2019} is that the dependence of $\lambda$ appearing in the constant of modulus on continuity is explicitly given. Double logarithmic stability inequality has already been obtained  by the first author \cite[Theorem 1.1]{Choulli2023a} in the case $n=3$.

We also point out that we have a single logarithmic stability inequality  for the partial Dirichlet-to-Neumann map given by
\[
H^{3/2}(\partial\Omega)\ni \varphi\mapsto \partial_{\nu_g} u_{q,\lambda}(\varphi)_{|\Sigma}\in H^{1/2}(\Sigma),
\]
This result was established by the first author in \cite[Theorem 1.6]{Choulli2023a}.

The rest of this text is organized as follows. In Section \ref{section2} we give some results for  global quantitative uniqueness of continuation adapted to our problem. Using these results, we obtain a quantitative Runge approximation result in Section \ref{section3}. Section \ref{section4} is devoted to establishing a preliminary inequality by means of the result in Section \ref{section3}. In Section \ref{section5}, we prove Theorem \ref{triple_log_stability} by using the integral inequality in Section \ref{section4} and complex geometric optic solutions. Finally, in Section \ref{section6}, we modify the analysis we performed in the case of the Dirichlet boundary condition to extend Theorem \ref{triple_log_stability} to the case of an impedance boundary condition.
 Auxiliary results are presented in appendices.

\section{Global quantitative uniqueness of continuation}\label{section2}

Define
\[
H^1_{0,\Sigma}(\Omega):=\{u\in H^1(\Omega);\; u_{|\Sigma}=0\}
\]
and, for notational simplicity, set $\zeta:=(n,\Omega,g,\kappa)$

\begin{thm}\label{Thm2.3_CAC_2023}
There exist constants $C=C(\zeta,\Sigma)>0$ and $c=c(\zeta,\Sigma)>0$ such that for all $\lambda\ge 1$, $q\in L^\infty(\Omega)$ satisfying $\|q\|_{L^\infty(\Omega)}\le\kappa$, $0<\varepsilon<1$ and $u\in H^2(\Omega)\cap H^1_{0,\Sigma}(\Omega)$ we have
\[
C\|u\|_{H^1(\Omega)}\le\lambda\varepsilon^{1/8} \|u\|_{H^2(\Omega)}+e^{e^{c/\varepsilon}}\left(\|(\Delta_g-q+\lambda)u\|_{L^2(\Omega)}+\|\partial_{\nu_g} u\|_{L^2(\Sigma)}\right).
\]
\end{thm}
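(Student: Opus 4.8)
The plan is to combine a three-balls (propagation of smallness) inequality with a boundary Carleman estimate, and then optimize in a way that produces the double-exponential constant $e^{e^{c/\varepsilon}}$. The global structure of the argument should mirror the classical scheme for quantitative unique continuation from a boundary subset: (i) near the interior we have good three-balls interpolation inequalities, which chain together along a sequence of overlapping balls (a ``spyglass'' argument) to propagate smallness from any fixed interior ball to a fixed compact subset, with a H\"older modulus; (ii) near the boundary portion $\Sigma$, where the Cauchy data $u_{|\Sigma}=0$ and $\partial_{\nu_g}u_{|\Sigma}$ are controlled, one uses a boundary Carleman estimate (or a local-at-the-boundary three-balls inequality) to convert the boundary information into smallness of $u$ in an interior collar; (iii) combining, one controls $\|u\|_{H^1}$ on all of $\Omega$ by an interpolation inequality of the form $\|u\|_{H^1(\Omega)}\le C\left(\|u\|_{H^2(\Omega)}^{1-\alpha}D^\alpha\right)$, where $D:=\|(\Delta_g-q+\lambda)u\|_{L^2(\Omega)}+\|\partial_{\nu_g}u\|_{L^2(\Sigma)}$ is the ``data'', and $\alpha\in(0,1)$ is a small exponent coming from iterating the three-balls chains. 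Then one linearizes: the estimate $\|u\|_{H^1}\le C\,M^{1-\alpha}D^\alpha$ with $M:=\|u\|_{H^2}$ implies, by Young's inequality, $\|u\|_{H^1}\le \eta M + C\eta^{-(1-\alpha)/\alpha}D$ for every $\eta>0$. To get the stated form one sets $\eta$ comparable to $\lambda\varepsilon^{1/8}$ and tracks how the constants depend on $\lambda$ and on the number of balls in the chain.

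The source of the \emph{double-exponential} factor deserves a word, since that is the most delicate point. A single boundary Carleman estimate typically yields only a single-logarithmic modulus, i.e. a constant of the form $e^{c/\varepsilon}$; the extra exponential comes from the fact that the domain of continuation is not a half-space but a general $C^{1,1}$ domain, so one must iterate the boundary estimate along a chain whose length grows, and the errors compound multiplicatively. Equivalently: a local interpolation inequality near a boundary point has the shape $\|u\|\le e^{c/\varepsilon}(\text{data}) + \varepsilon^\beta\|u\|_{H^2}$ (optimized Carleman), and chaining $N$ of these replaces $e^{c/\varepsilon}$ by $e^{cN/\varepsilon}$ while $\varepsilon^\beta$ stays of the same order only if one re-optimizes, forcing $N$ itself to be taken of order $e^{c/\varepsilon}$ to absorb accumulated remainders — this is what produces $e^{e^{c/\varepsilon}}$. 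The factor $\lambda$ in front of $\varepsilon^{1/8}\|u\|_{H^2}$ arises because the lower-order term $(q-\lambda)u$ in the equation has size $\sim\lambda\|u\|$, and after the H\"older interpolation the spectral parameter survives linearly; keeping it linear (rather than, say, $\lambda^{3/2}$) requires a small amount of care in how $\lambda$ is carried through the Carleman weights, which is why the explicit dependence on $\lambda$ is highlighted as a feature of the theorem.

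Concretely, the steps I would carry out, in order, are: (1) fix a reference ball $B_0\Subset\Omega\setminus\overline{\Sigma\text{-collar}}$ and state the interior three-balls inequality for solutions of $(\Delta_g-q+\lambda)u=f$, with constants uniform for $\|q\|_\infty\le\kappa$ and $\lambda\ge 1$ (standard, via Carleman estimates for $\Delta_g$ with a radial weight, absorbing $qu$ and $\lambda u$ into the right-hand side since the Carleman parameter can be taken $\gtrsim\lambda$); (2) chain these to get $\|u\|_{H^1(\Omega_0')}\le C M^{1-\alpha}\left(\|u\|_{H^1(B_0)}+D\right)^\alpha$ for a slightly shrunk interior domain $\Omega_0'$; (3) prove the boundary counterpart: a Carleman estimate near a point of the interior of $\Sigma$, with the Cauchy data on $\Sigma$ controlled, giving $\|u\|_{H^1(B_0)}\le e^{e^{c/\varepsilon}}D + \varepsilon^{?}\|u\|_{H^2(\Omega)}$ after optimization; (4) insert (3) into (2), split the collar near $\partial\Omega$ where one just uses trace/elliptic estimates controlled by $D$ and $M$, and combine all pieces; (5) finally re-optimize the small parameters so that every $\|u\|_{H^2}$-coefficient is $\le\lambda\varepsilon^{1/8}$ and every data-coefficient is $\le e^{e^{c/\varepsilon}}$, adjusting $c$. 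The main obstacle is step (3) together with the bookkeeping in (5): getting the \emph{boundary} Carleman estimate with the correct joint dependence on $\varepsilon$ and $\lambda$, and verifying that the double-exponential survives the final optimization without degrading to a triple exponential or picking up a worse power of $\lambda$ — this is where the specific exponent $1/8$ and the precise shape $e^{e^{c/\varepsilon}}$ get pinned down, and where I would expect to spend most of the effort (likely invoking a previously established boundary Carleman estimate, cf. the authors' earlier work, rather than reproving it from scratch).
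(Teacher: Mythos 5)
Your plan takes a genuinely different route from the paper, and more importantly it leaves exactly the hard steps unresolved — the ones you yourself flag at the end.

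The paper does not redo any Carleman or three-balls machinery at all, and does not optimize anything. The entire proof is a \emph{dimension-lifting} reduction: one introduces the auxiliary variable $t$, passes to the cylinder $\Omega\times(0,1)$, and sets $v(x,t):=e^{\sqrt{\lambda}t}u(x)$. Then $(\Delta_g+\partial_t^2-q)v=e^{\sqrt{\lambda}t}(\Delta_g-q+\lambda)u$, so the spectral parameter disappears from the operator; $v$ solves a $\lambda$-free elliptic equation in the cylinder with vanishing Dirichlet trace on $\Sigma\times(0,1)$. One then applies, as a black box, \cite[Theorem 1]{Choulli2020} with $j=1$ and $s=1/4$, which is a global quantitative unique continuation inequality for a $\lambda$-independent elliptic operator in a Lipschitz domain: it already contains both the double-exponential constant $e^{e^{c/\varepsilon}}$ and the exponent $1/8$ (which is $s/2$ with $s=1/4$). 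Neither of those is derived in this paper, so your heuristic account of where they ``come from'' (chaining, re-optimization, etc.), while plausible as folklore for the cited result, is not part of this proof. Finally, dividing by the common factor $\bigl(\lambda^{-1/2}e^{\sqrt{\lambda}}\sinh\sqrt{\lambda}\bigr)^{1/2}$ coming from $\int_0^1 e^{2\sqrt{\lambda}t}\,dt$ gives the stated inequality; the linear factor $\lambda$ in front of $\varepsilon^{1/8}\|u\|_{H^2(\Omega)}$ arises purely because $\partial_t^2 v=\lambda v$, so $\|v\|_{H^2(\Omega\times(0,1))}$ is larger than $\|v\|_{H^1(\Omega\times(0,1))}$ by a factor $\lambda$, not because of the lower-order term $(q-\lambda)u$ as you suggest.

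As a blind attempt your outline is not wrong in spirit — it is the standard route by which results such as \cite[Theorem 1]{Choulli2020} are themselves proved — but as a proof of \emph{this} theorem it has a genuine gap: you never actually obtain the stated $\lambda$-dependence (the single power of $\lambda$ on the $\|u\|_{H^2}$ term and the $\lambda$-free coefficient on the data term), the precise exponent $1/8$, or the double-exponential, and you explicitly defer all three to a hypothetical Carleman estimate ``cf.\ the authors' earlier work.'' Tracking $\lambda$ directly through $\lambda$-dependent Carleman weights, as you propose, is exactly what the substitution $v=e^{\sqrt{\lambda}t}u$ is designed to avoid; without that trick you would need to reprove (or locate) a quantitative UCP with the correct explicit joint dependence on $\lambda$ and $\varepsilon$, and you give no argument that this can be done with only a linear loss in $\lambda$. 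So the missing idea is the lifting to $\Omega\times(0,1)$ together with the recognition that a $\lambda$-free version of the estimate already exists and can simply be pulled back.
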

\begin{proof}
We will apply a result of quantitative uniqueness of continuation and must represent the dependence of the spectral parameter $\lambda$ on constants explicitly. To do this, we apply the result by Choulli \cite{Choulli2020} to an elliptic operator $-\Delta_g-\partial_t^2+q$ without the spectral parameter in the Lipschitz domain $\Omega\times(0,1)$.

According to \cite[Theorem 1]{Choulli2020} with $j=1$ and $s=1/4$, there exist constants $C=C(\zeta,\Sigma)>0$ and $c=c(\zeta,\Sigma)>0$, for any  $0<\varepsilon<1$ and $v\in H^2(\Omega\times (0,1))$ satisfying $v_{|\Sigma\times(0,1)}=0$,  we have
\begin{align*}
&C\|v\|_{H^1(\Omega\times(0,1))}\le \varepsilon^{1/8} \|v\|_{H^2(\Omega\times(0,1))}
\\
&\hskip 3cm +e^{e^{c/\varepsilon}}\left(\|(\Delta_g+\partial_t^2-q)v\|_{L^2(\Omega\times(0,1))}+\|\partial_{\nu_g} v\|_{L^2(\Sigma\times(0,1))}\right).
\end{align*}

Henceforth, $C=C(\zeta,\Sigma)>0$ will denote a generic constant. Since for any $\lambda\ge 1$, $v:=e^{\sqrt{\lambda}t}u$ with $u\in H^2(\Omega)\cap H^1_{0,\Sigma}(\Omega)$ satisfies
\[
C\|v\|_{H^1(\Omega\times(0,1))}\ge \left(\lambda^{-1/2}e^{\sqrt{\lambda}}\sinh\sqrt{\lambda}\right)^{1/2}\|u\|_{H^1(\Omega)},
\]
\[
C\|v\|_{H^2(\Omega\times(0,1))}\le \lambda\left(\lambda^{-1/2} e^{\sqrt{\lambda}}\sinh\sqrt{\lambda}\right)^{1/2}\|u\|_{H^2(\Omega)}
\]
and
\begin{align*}
&C\left(\|(\Delta_g+\partial_t^2-q)v\|_{L^2(\Omega\times(0,1))}+\|\partial_{\nu_g} v\|_{L^2(\Sigma\times(0,1))}\right)
\\
&\hskip 1cm \le \left(\lambda^{-1/2}e^{\sqrt{\lambda}}\sinh\sqrt{\lambda}\right)^{1/2}\left(\|(\Delta_g-q+\lambda)u\|_{L^2(\Omega)}+\|\partial_{\nu_g} u\|_{L^2(\Sigma)}\right),
\end{align*}
By combining the last three inequalities, we obtain the expected one.
\end{proof}

As a consequence of Theorem \ref{Thm2.3_CAC_2023} we have the following corollary.

\begin{cor}\label{Cor2.4_CAC_2023}
Let $1<\delta<2$. There exist constants $C=C(\zeta,\delta,\Sigma)>0$ and  $c=c(\zeta,\delta, \Sigma)>0$ such that for all $\lambda \ge 1$, $q\in L^\infty(\Omega)$ satisfying $\|q\|_{L^\infty(\Omega)}\le\kappa$, $0<\varepsilon<1$ and $u\in H^2(\Omega)\cap H^1_{0,\Sigma}(\Omega)$ we have
\begin{align*}
&C\|u\|_{H^\delta(\Omega)}\le\lambda^{2-\delta}\varepsilon^{(2-\delta)/8} \|u\|_{H^2(\Omega)}
\\
&\hskip 3.5cm +\lambda^{-(\delta-1)}e^{e^{c/\varepsilon}}\left(\|(\Delta_g-q+\lambda)u\|_{L^2(\Omega)}+\|\partial_{\nu_g} u\|_{L^2(\Sigma)}\right).
\end{align*}
\end{cor}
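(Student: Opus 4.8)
The plan is to obtain the $H^\delta$ estimate by interpolation between the $H^1$ estimate supplied by Theorem \ref{Thm2.3_CAC_2023} and the trivial $H^2$ bound, and then to optimize the resulting inequality in a suitable auxiliary parameter. Concretely, for $1<\delta<2$ the interpolation inequality $\|u\|_{H^\delta(\Omega)}\le C\|u\|_{H^1(\Omega)}^{2-\delta}\|u\|_{H^2(\Omega)}^{\delta-1}$ holds with $C=C(n,\Omega,\delta)$. Insert the bound for $\|u\|_{H^1(\Omega)}$ from Theorem \ref{Thm2.3_CAC_2023}; writing $R:=\|(\Delta_g-q+\lambda)u\|_{L^2(\Omega)}+\|\partial_{\nu_g}u\|_{L^2(\Sigma)}$ and $N:=\|u\|_{H^2(\Omega)}$ for brevity, this gives
\[
C\|u\|_{H^\delta(\Omega)}\le\left(\lambda\varepsilon^{1/8}N+e^{e^{c/\varepsilon}}R\right)^{2-\delta}N^{\delta-1}.
\]
Using the elementary inequality $(a+b)^{2-\delta}\le a^{2-\delta}+b^{2-\delta}$ for $a,b\ge0$ and $0<2-\delta<1$, the right-hand side is bounded by $\lambda^{2-\delta}\varepsilon^{(2-\delta)/8}N+e^{(2-\delta)e^{c/\varepsilon}}R^{2-\delta}N^{\delta-1}$.

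The first term already has the desired form up to replacing $\varepsilon$ by a comparable quantity (see below); the issue is the second term, in which $R^{2-\delta}N^{\delta-1}$ must be converted into a linear expression in $R$ with the correct power of $\lambda$ in front. This is where the extra parameter comes in: apply Young's inequality $xy\le \eta^{-(\delta-1)/(2-\delta)}x^{1/(2-\delta)}+\eta^{1/(\delta-1)}\cdot(\text{const})\,y^{1/(\delta-1)}$ — more cleanly, for any $t>0$,
\[
R^{2-\delta}N^{\delta-1}\le (2-\delta)\,t^{-(\delta-1)}R+(\delta-1)\,t^{2-\delta}N,
\]
which is just weighted Young with exponents $1/(2-\delta)$ and $1/(\delta-1)$. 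Choosing $t=\lambda^{-1}e^{-e^{c/\varepsilon}}$ (so that $t^{2-\delta}e^{(2-\delta)e^{c/\varepsilon}}=\lambda^{-(2-\delta)}$ is absorbed into the $\varepsilon^{1/8}\|u\|_{H^2}$ term after adjusting $\varepsilon$) produces the coefficient $\lambda^{-(\delta-1)}e^{e^{c'/\varepsilon}}$ in front of $R$ and a term of the form $\lambda^{-(2-\delta)}e^{-(\delta-1)e^{c/\varepsilon}}N$ in front of $N$, which is dominated by $\lambda^{2-\delta}\varepsilon^{1/8}N$ and hence harmless.

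The remaining bookkeeping is to reconcile the exponent $\varepsilon^{(2-\delta)/8}$ with the advertised $\varepsilon^{1/8}$: since $0<\varepsilon<1$ and $2-\delta<1$ we have $\varepsilon^{(2-\delta)/8}>\varepsilon^{1/8}$, so a direct substitution is not immediate, but replacing $\varepsilon$ by $\varepsilon^{1/(2-\delta)}$ (still in $(0,1)$) throughout Theorem \ref{Thm2.3_CAC_2023} before interpolating yields $\varepsilon^{1/8}$ exactly, at the cost of changing $c$ to $c^{1/(2-\delta)}\le c$-type constants inside the double exponential — all absorbed into the generic constant $c=c(\zeta,\Sigma)$, while $C$ picks up the dependence on $\delta$. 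The main obstacle, such as it is, is precisely this juggling of the double-exponential weight through Young's inequality so that the power of $\lambda$ lands on $\lambda^{-(\delta-1)}$ and the leftover $N$-term is genuinely subordinate to $\lambda^{2-\delta}\varepsilon^{1/8}\|u\|_{H^2(\Omega)}$; once the parameter $t$ is chosen as above this is routine, and combining the pieces gives the claimed inequality.
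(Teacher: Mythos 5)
Your overall plan (interpolate, then balance with Young's inequality) is the right one and in spirit close to the paper, but the order of operations differs from the paper's and the execution has definite errors that make the argument fail as written.

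The most serious problem is the choice of the Young parameter. With $t=\lambda^{-1}e^{-e^{c/\varepsilon}}$ one has $t^{-(\delta-1)}=\lambda^{\delta-1}e^{(\delta-1)e^{c/\varepsilon}}$, so after multiplying by the prefactor $e^{(2-\delta)e^{c/\varepsilon}}$ the coefficient of $R$ becomes $(2-\delta)\lambda^{\delta-1}e^{e^{c/\varepsilon}}$, \emph{not} $\lambda^{-(\delta-1)}e^{e^{c'/\varepsilon}}$ as you claim. Since $\delta>1$ and $\lambda\ge 1$, $\lambda^{\delta-1}\ge\lambda^{-(\delta-1)}$, so this is strictly worse than what the corollary requires; no subsequent absorption can recover the missing power of $\lambda$. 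Likewise the $N$-term coefficient is $e^{(2-\delta)e^{c/\varepsilon}}(\delta-1)t^{2-\delta}=(\delta-1)\lambda^{-(2-\delta)}$ — the exponentials cancel exactly — not $\lambda^{-(2-\delta)}e^{-(\delta-1)e^{c/\varepsilon}}$, and the constant $(\delta-1)\lambda^{-(2-\delta)}$ is \emph{not} dominated by $\lambda^{2-\delta}\varepsilon^{1/8}$ for small $\varepsilon$. To make your scheme work one would need $t$ proportional to $\lambda$ (e.g. $t=\lambda\,\varepsilon^{1/(8(2-\delta))}e^{-e^{c/\varepsilon}}$), which produces $\lambda^{-(\delta-1)}$ in front of $R$ and $\lambda^{2-\delta}\varepsilon^{1/8}$ in front of $N$, at the cost of an extra factor $\varepsilon^{-(\delta-1)/(8(2-\delta))}$ that has to be absorbed by enlarging $c$ (in a $\delta$-dependent way).

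There is also a structural difficulty you wave away too quickly: because you substitute Theorem \ref{Thm2.3_CAC_2023} \emph{before} applying Young, the factor $\varepsilon^{1/8}$ gets raised to the power $2-\delta$, giving the larger $\varepsilon^{(2-\delta)/8}$. Your remedy of replacing $\varepsilon$ by $\varepsilon^{1/(2-\delta)}$ in the theorem does recover $\varepsilon^{1/8}$, but it simultaneously turns $e^{e^{c/\varepsilon}}$ into $e^{e^{c\varepsilon^{-1/(2-\delta)}}}$; since $1/(2-\delta)>1$, this grows faster as $\varepsilon\to 0$ than $e^{e^{c'/\varepsilon}}$ for \emph{any} constant $c'$, so it cannot be ``absorbed into the generic constant $c$.'' The paper avoids both issues simply by applying the weighted Young inequality to the interpolation bound \emph{first}, i.e.\ splitting $\|u\|_{H^1}^{2-\delta}\|u\|_{H^2}^{\delta-1}\le \rho\lambda^{-(\delta-1)}\|u\|_{H^1}+\rho^{-(2-\delta)/(\delta-1)}\lambda^{2-\delta}\|u\|_{H^2}$, and only then inserting Theorem \ref{Thm2.3_CAC_2023} into the $\|u\|_{H^1}$ term and choosing $\rho=\varepsilon^{-(\delta-1)/8(2-\delta)}$; that way the $\varepsilon^{1/8}$ from the theorem never gets raised to a sub-unital power, and a single auxiliary parameter suffices.
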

\begin{proof}
As $H^\delta(\Omega)$ is an interpolation space between $H^1(\Omega)$ and $H^2(\Omega)$, we have
\[
c_0\|u\|_{H^\delta(\Omega)}\le \|u\|_{H^1(\Omega)}^{2-\delta}\|u\|_{H^2(\Omega)}^{\delta-1},
\]
where $c_0=c_0(n,\Omega,g,\delta)>0$ is a generic constant. Pick $\rho>0$ arbitrarily. Young's inequality yields
\[
c_0\|u\|_{H^\delta(\Omega)}\le \lambda^{-(\delta-1)}\rho \|u\|_{H^1(\Omega)}+\lambda^{2-\delta}\rho^{-(2-\delta)/(\delta-1)}\|u\|_{H^2(\Omega)}.
\]
Applying Theorem \ref{Thm2.3_CAC_2023}, we obtain
\begin{align*}
&C\|u\|_{H^\delta(\Omega)}\le \lambda^{2-\delta}\left(\varepsilon^{1/8}\rho+\rho^{-(2-\delta)/(\delta-1)}\right)\|u\|_{H^2(\Omega)}
\\
&\hskip 3.5cm +\lambda^{-(\delta-1)}\rho e^{e^{c/\varepsilon}}\left(\|(\Delta_g-q+\lambda)u\|_{L^2(\Omega)}+\|\partial_{\nu_g} u\|_{L^2(\Sigma)}\right),
\end{align*}
where $C=C(\zeta,\delta,\Sigma)>0$ and $c=c(\zeta,\Sigma)>0$ are the constants as in Theorem \ref{Thm2.3_CAC_2023}. Upon modifying the constants $C$ and $c$, the expected inequality follows by taking $\rho=\varepsilon^{-(\delta-1)/8}$.
\end{proof}

The following corollary will be used hereinafter.

\begin{cor}\label{Cor2.5_CAC_2023}
Let $\chi$ be the characteristic function of $\Omega_0$ and $1<\delta<2$. There exist constants $C=C(\zeta,\Omega_0,\delta,\Gamma)>0$ and $c=c(\zeta,\Omega_0,\delta,\Gamma)>0$ such that for all $\lambda\in\rho(A_{q_0})$ satisfying $\lambda\ge 1$, $q\in \mathscr{Q}_\lambda$, $0<\varepsilon<1$, $f\in L^2(\Omega)$ and $u=R_q(\lambda)(\chi f)$ we have
\[
C\|u\|_{H^\delta(\Omega_1)}\le\lambda^{3-\delta}\mathbf{e}_\lambda\varepsilon^{(2-\delta)/8} \|f\|_{L^2(\Omega_0)}+\lambda^{-(\delta-1)}e^{e^{c/\varepsilon}}\|\partial_{\nu_g} u\|_{L^2(\Gamma)}.
\]
\end{cor}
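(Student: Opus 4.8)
The plan is to deduce Corollary~\ref{Cor2.5_CAC_2023} from Corollary~\ref{Cor2.4_CAC_2023} by exploiting the particular structure of $u=R_q(\lambda)(\chi f)$. Observe first that $u\in H^2(\Omega)\cap H^1_0(\Omega)$ and satisfies $(-\Delta_g+q-\lambda)u=\chi f$ in $\Omega$, hence $(\Delta_g-q+\lambda)u=-\chi f$, so $(\Delta_g-q+\lambda)u=0$ in $\Omega_1$. This is the crucial point: the source is supported in $\Omega_0$, so on $\Omega_1$ the function $u$ solves the homogeneous equation, and we may hope to bound $\|u\|_{H^\delta(\Omega_1)}$ in terms of Cauchy data on $\Gamma\subset\partial\Omega_1$ alone. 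The strategy is therefore to apply the interior-estimate machinery of Corollary~\ref{Cor2.4_CAC_2023} on the subdomain $\Omega_1$ (with $\Sigma$ there replaced by $\Gamma$, using that $\Omega_1$ is connected with $C^{1,1}$ boundary and $u_{|\partial\Omega}=0$, in particular $u_{|\Gamma}=0$), which yields
\[
C\|u\|_{H^\delta(\Omega_1)}\le \lambda^{2-\delta}\varepsilon^{1/8}\|u\|_{H^2(\Omega_1)}+\lambda^{-(\delta-1)}e^{e^{c/\varepsilon}}\|\partial_{\nu_g}u\|_{L^2(\Gamma)},
\]
since the $L^2$ term involving $(\Delta_g-q+\lambda)u$ vanishes on $\Omega_1$.

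Next I would control the remaining term $\|u\|_{H^2(\Omega_1)}\le\|u\|_{H^2(\Omega)}$ by the data $\|f\|_{L^2(\Omega_0)}$. This is exactly where the resolvent and the quantity $\mathbf{e}_\lambda$ enter. Since $u=R_q(\lambda)(\chi f)$ with $q\in\mathscr{L}_\lambda$ and $\lambda\in\rho(A_{q_0})$, one has $\|u\|_{L^2(\Omega)}\le \|R_q(\lambda)\|\,\|\chi f\|_{L^2(\Omega)}$, and $\|R_q(\lambda)\|$ is bounded by a constant times $\mathbf{e}_\lambda=\max(1/\mathrm{dist}(\lambda,\sigma_\lambda),1)$ by definition of $\sigma_\lambda$ (one should check this dominates $\|R_q(\lambda)\|$ uniformly in $q\in\mathscr{L}_\lambda$, which is essentially the content alluded to around Lemma~\ref{Lem2.1_CAC_2023}). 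Then the elliptic $H^2$ a priori estimate for $A_q-\lambda$ on $\Omega$ with zero Dirichlet data gives $\|u\|_{H^2(\Omega)}\le C\big(\|\chi f\|_{L^2(\Omega)}+\lambda\|u\|_{L^2(\Omega)}\big)\le C\lambda\mathbf{e}_\lambda\|f\|_{L^2(\Omega_0)}$ (absorbing the $\lambda$-dependence carefully; one keeps track of powers so that the total matches $\lambda^{3-\delta}$). Substituting into the previous display, the first term becomes $\lambda^{2-\delta}\varepsilon^{1/8}\cdot C\lambda\mathbf{e}_\lambda\|f\|_{L^2(\Omega_0)}=C\lambda^{3-\delta}\mathbf{e}_\lambda\varepsilon^{1/8}\|f\|_{L^2(\Omega_0)}$, which is the claimed bound.

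The main obstacle I anticipate is the bookkeeping: ensuring that Corollary~\ref{Cor2.4_CAC_2023} is legitimately applicable on $\Omega_1$ rather than on $\Omega$ (the hypotheses there are stated for a generic $C^{1,1}$ domain with a distinguished boundary piece, and $\Omega_1$ is connected with boundary $\partial\Omega_0\cup\Gamma$-relevant structure, so one must make sure $\Gamma$ plays the role of $\Sigma$ correctly and that $u\in H^2(\Omega_1)\cap H^1_{0,\Gamma}(\Omega_1)$), and tracking the exact powers of $\lambda$ through the resolvent estimate and the elliptic regularity estimate so that they combine to $\lambda^{3-\delta}$ and $\lambda^{-(\delta-1)}$ respectively with no surplus. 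A secondary point to handle with care is the uniformity in $q\in\mathscr{L}_\lambda$ of the constant in $\|R_q(\lambda)\|\le C\mathbf{e}_\lambda$: this should follow from the Neumann-series argument sketched in the excerpt (where $\|q'\|_{L^\infty}<\|R_{q_0}(\lambda)\|^{-1}$ guarantees invertibility of $A_q-\lambda$), combined with the definition of $\sigma_\lambda$ as the union over $q\in\mathscr{L}_\lambda$ of the spectra, so that $\mathrm{dist}(\lambda,\sigma_\lambda)\le\mathrm{dist}(\lambda,\sigma(A_q))$ and hence $\|R_q(\lambda)\|\le 1/\mathrm{dist}(\lambda,\sigma(A_q))\le \mathbf{e}_\lambda$; once this is in hand the rest is routine.
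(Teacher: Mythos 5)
Your proposal is correct and follows the same route as the paper: apply Corollary~\ref{Cor2.4_CAC_2023} on $\Omega_1$ with $\Sigma$ replaced by $\Gamma$ (noting that $u_{|\Gamma}=0$ and $(\Delta_g-q+\lambda)u=0$ in $\Omega_1$ since $\chi f$ is supported in $\Omega_0$), and then bound $\|u\|_{H^2(\Omega_1)}\le\|u\|_{H^2(\Omega)}\le c_1\lambda\mathbf{e}_\lambda\|f\|_{L^2(\Omega_0)}$. The paper simply cites Lemma~\ref{Lem2.1_CAC_2023}(i) for this last $H^2$ bound rather than re-deriving it from the spectral resolvent estimate and elliptic $H^2$ regularity as you sketch, but that is the content of that lemma, so the two arguments coincide.
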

\begin{proof}
Applying Corollary \ref{Cor2.4_CAC_2023} to $u=R_q(\lambda)(\chi f)\in H^2(\Omega)\cap H^1_0(\Omega)$ by replacing  $\Omega$ by $\Omega_1$ and $\Sigma$ by $\Gamma$ yields
\[
C\|u\|_{H^\delta(\Omega_1)}\le\lambda^{2-\delta}\varepsilon^{(2-\delta)/8} \|u\|_{H^2(\Omega_1)}+\lambda^{-(\delta-1)}e^{e^{c/\varepsilon}}\|\partial_{\nu_g} u\|_{L^2(\Gamma)},
\]
where $C=C(\zeta,\Omega_0,\delta,\Gamma)>0$ and $c=c(\zeta,\Omega_0,\delta,\Gamma)>0$ are the constants as in Corollary \ref{Cor2.4_CAC_2023}. Since we have $\|u\|_{H^2(\Omega_1)}\le c_1\lambda\mathbf{e}_\lambda\| f\|_{L^2(\Omega_0)}$ by Lemma \ref{Lem2.1_CAC_2023} (i), where $c_1=c_1(\zeta)>0$ is a constant, the expected inequality follows by combining these inequalities and modifying the constant $C$.
\end{proof}

\section{Quantitative Runge approximation}\label{section3}

For $\lambda\ge 0$ and $q\in L^\infty(\Omega)$, define
\[
\mathscr{S}_{q,\lambda}^0:=\left\{u\in H^2(\Omega_0);\; (-\Delta_g+q-\lambda)u=0\; \text{in}\; \Omega_0\right\}
\]
and
\[
\mathscr{S}_{q,\lambda}^1:=\left\{u\in H^2(\Omega);\; (-\Delta_g+q-\lambda)u=0\; \text{in}\; \Omega,\quad u_{|\partial\Omega}\in H^{3/2}_\Gamma(\partial\Omega)\right\}.
\]

Recall that $\zeta=(n,\Omega,g,\kappa)$.

\begin{thm}\label{Runge_approx}
Let $3/2<\delta<2$. There exist constants $C=C(\zeta,\Omega_0,\delta,\Gamma)>0$, $C'=C'(\zeta)>0$ and $c=c(\zeta,\Omega_0,\delta,\Gamma)>0$ such that for all $\lambda\in\rho(A_{q_0})$ satisfying $\lambda\ge 1$, $q\in\mathscr{Q}_\lambda$, $0<\varepsilon<1$ and $u\in \mathscr{S}_{q,\lambda}^0$ there exists $v\in \mathscr{S}_{q,\lambda}^1$ for which the following inequalities hold
\begin{align*}
C\|u-v_{|\Omega_0}\|_{L^2(\Omega_0)}&\le \lambda^{3-\delta}\mathbf{e}_\lambda \varepsilon^{(2-\delta)/8}\|u\|_{H^2(\Omega_0)},
\\
C'\|v\|_{H^2(\Omega)}&\le \lambda^2\mathbf{e}_\lambda e^{e^{c/\varepsilon}}\|u\|_{L^2(\Omega_0)}.
\end{align*}
\end{thm}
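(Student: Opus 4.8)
The plan is to prove this Runge approximation result by the classical duality/Hahn–Banach argument, but quantified using the global unique continuation estimate of Corollary \ref{Cor2.5_CAC_2023}. Fix $u\in\mathscr{S}_{q,\lambda}^0$ and $0<\varepsilon<1$. The natural candidate for $v$ is obtained as follows: consider the linear operator $T$ sending a boundary datum $\varphi\in H^{3/2}_\Gamma(\partial\Omega)$ to $w_{|\Omega_0}$, where $w=u_{q,\lambda}(\varphi)$ solves the BVP \eqref{BVP}; equivalently, working with the adjoint, one looks at how well elements of $\mathscr{S}_{q,\lambda}^1$ restricted to $\Omega_0$ approximate $u$ in $L^2(\Omega_0)$. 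The obstruction to exact approximation is measured by the annihilator of $\{w_{|\Omega_0}:w\in\mathscr{S}_{q,\lambda}^1\}$ in $L^2(\Omega_0)$, and by a standard computation that annihilator consists of those $f\in L^2(\Omega_0)$ for which the solution $R_q(\lambda)(\chi f)$ (extended by $\chi$) has vanishing Neumann trace on $\Gamma$. Since $u$ satisfies $(-\Delta_g+q-\lambda)u=0$ in $\Omega_0$, one checks $\langle u,f\rangle_{L^2(\Omega_0)}=0$ for all such $f$, which in the non-quantitative setting already gives $u\in\overline{\{w_{|\Omega_0}\}}$; here we must produce an explicit $v$ with explicit bounds.

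The key step is therefore to run a constructive version of this argument. I would minimize the functional
\[
J_\alpha(\varphi):=\tfrac12\|w_{q,\lambda}(\varphi)_{|\Omega_0}-u\|_{L^2(\Omega_0)}^2+\tfrac{\alpha}{2}\|\partial_{\nu_g}w_{q,\lambda}(\varphi)\|_{L^2(\Gamma)}^2
\]
over $\varphi\in H^{3/2}_\Gamma(\partial\Omega)$ (a Tikhonov-type regularization), or equivalently solve the associated normal equation, with the regularization parameter $\alpha$ to be tuned against $\varepsilon$. Writing the minimizer's optimality condition and testing against $\varphi$ itself produces a dual element $f_\alpha\in L^2(\Omega_0)$ with $f_\alpha=$ (something like) the residual $w_{q,\lambda}(\varphi_\alpha)_{|\Omega_0}-u$, and the associated function $p_\alpha:=R_q(\lambda)(\chi f_\alpha)$ whose Neumann trace on $\Gamma$ is controlled by $\alpha^{-1}$ times the residual. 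Applying Corollary \ref{Cor2.5_CAC_2023} to $p_\alpha$ with exponent $\delta$ then bounds $\|p_\alpha\|_{H^\delta(\Omega_1)}$, hence (by the $3/2<\delta$ trace theorem) its Cauchy data on $\partial\Omega_0$, by $\lambda^{3-\delta}\mathbf{e}_\lambda\varepsilon^{1/8}\|f_\alpha\|_{L^2(\Omega_0)}+\lambda^{-(\delta-1)}e^{e^{c/\varepsilon}}\|\partial_{\nu_g}p_\alpha\|_{L^2(\Gamma)}$. Feeding this back, together with a Green/integration-by-parts identity relating $\langle f_\alpha,u\rangle_{L^2(\Omega_0)}$ to the Cauchy data of $p_\alpha$ on $\partial\Omega_0$ and to $\langle\partial_{\nu_g}p_\alpha,\varphi_\alpha\rangle_\Gamma$, yields a closed inequality for $\|f_\alpha\|_{L^2(\Omega_0)}=\|w_{q,\lambda}(\varphi_\alpha)_{|\Omega_0}-u\|_{L^2(\Omega_0)}$. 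Choosing $\alpha$ appropriately (polynomially in $\varepsilon$ against the $e^{e^{c/\varepsilon}}$ factor) absorbs the bad term and leaves the first stated bound with $v:=w_{q,\lambda}(\varphi_\alpha)\in\mathscr{S}_{q,\lambda}^1$.

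For the second inequality, I would estimate $\|v\|_{H^2(\Omega)}=\|w_{q,\lambda}(\varphi_\alpha)\|_{H^2(\Omega)}$ directly from the variational characterization: the minimum value $J_\alpha(\varphi_\alpha)\le J_\alpha(0)=\tfrac12\|u\|_{L^2(\Omega_0)}^2$ gives $\alpha\|\partial_{\nu_g}v\|_{L^2(\Gamma)}^2\le\|u\|_{L^2(\Omega_0)}^2$ and $\|v_{|\Omega_0}-u\|_{L^2(\Omega_0)}\le\|u\|_{L^2(\Omega_0)}$; combined with the bound on $\|\varphi_\alpha\|_{H^{3/2}(\partial\Omega)}$ coming from the normal equation and the a priori estimate $\|w_{q,\lambda}(\varphi)\|_{H^2(\Omega)}\le C_0\|\varphi\|_{H^{3/2}(\partial\Omega)}$ (with the $\lambda$-dependence of $C_0$ made explicit via $\mathbf{e}_\lambda$, as in Lemma \ref{Lem2.1_CAC_2023}), one tracks the powers of $\lambda$, $\mathbf{e}_\lambda$, and the chosen $\alpha\sim(e^{e^{c/\varepsilon}})^{-2}$ to arrive at $C'\|v\|_{H^2(\Omega)}\le\lambda^2\mathbf{e}_\lambda e^{e^{c/\varepsilon}}\|u\|_{L^2(\Omega_0)}$. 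The main obstacle, and where care is needed, is the bookkeeping: ensuring the regularization parameter $\alpha$ is chosen so that \emph{both} the $\varepsilon^{1/8}$-smallness in the first estimate \emph{and} the polynomial-in-$\lambda$ growth in the second come out exactly as stated, while correctly propagating the $\mathbf{e}_\lambda$ factor (which encodes the distance from $\lambda$ to $\sigma_\lambda$) through every application of the resolvent bound. A secondary technical point is justifying the Green identity connecting the interior dual problem on $\Omega_0$ with the exterior unique-continuation region $\Omega_1$ across $\partial\Omega_0$, which requires the $C^{1,1}$ regularity of $\Omega_0$ and $\delta>3/2$ so that normal traces on $\partial\Omega_0$ are well defined in $L^2$.
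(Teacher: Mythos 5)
Your overall strategy---duality, a dual solution $p_\alpha=R_q(\lambda)(\chi f_\alpha)$, quantitative unique continuation via Corollary~\ref{Cor2.5_CAC_2023} applied in $\Omega_1$, and a Green identity across $\partial\Omega_0$---is the right one and essentially what the paper does; the paper implements the regularization by hard-thresholding the singular value decomposition of the operator $T\colon\varphi\mapsto u_{q,\lambda}(\psi\overline{\varphi})|_{\Omega_0}$ at singular value $\tau_j>t$, so your Tikhonov parameter $\alpha$ plays the role of $t^2$.

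The concrete gap is the choice of penalty $\tfrac{\alpha}{2}\|\partial_{\nu_g}w_{q,\lambda}(\varphi)\|_{L^2(\Gamma)}^2$. The map $N\colon\varphi\mapsto\partial_{\nu_g}w_{q,\lambda}(\varphi)|_{\Gamma}$ is compact from $H^{3/2}_\Gamma(\partial\Omega)$ to $L^2(\Gamma)$, so this penalty is not coercive in $\varphi$: the functional $J_\alpha$ need not attain its infimum over $H^{3/2}_\Gamma(\partial\Omega)$, and the normal equation $(T^\ast T+\alpha N^\ast N)\varphi_\alpha=T^\ast u$ has a compact (hence non-invertible) left-hand side, so it does \emph{not} deliver the bound on $\|\varphi_\alpha\|_{H^{3/2}(\partial\Omega)}$ that you invoke at the end. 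But that bound is exactly what the second stated inequality needs: $\|v\|_{H^2(\Omega)}\le c_2\lambda^2\mathbf{e}_\lambda\|\varphi_\alpha\|_{H^{3/2}(\partial\Omega)}$ by Lemma~\ref{Lem2.1_CAC_2023}~(ii), and without control of $\|\varphi_\alpha\|_{H^{3/2}(\partial\Omega)}$ this estimate cannot be closed. The remedy, which brings your argument into line with the paper's, is to penalize $\tfrac{\alpha}{2}\|\varphi\|_{H^{3/2}(\partial\Omega)}^2$ (after restricting to the orthogonal complement of $\{f\in H^{3/2}(\partial\Omega);\ \psi f=0\}$, as the paper does, so that the Neumann data of $p_\alpha$ enters only through $\Gamma$). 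Then $J_\alpha(\varphi_\alpha)\le J_\alpha(0)$ gives $\|\varphi_\alpha\|\le\alpha^{-1/2}\|u\|_{L^2(\Omega_0)}$, the optimality condition reads $T^\ast f_\alpha=-\alpha\varphi_\alpha$ so that $\|\psi\partial_{\nu_g}p_\alpha\|_{(F^\perp)^\ast}=\alpha\|\varphi_\alpha\|\le\alpha^{1/2}\|u\|_{L^2(\Omega_0)}$, interpolation upgrades this to an $L^2(\Gamma_0)$ bound, and choosing $\alpha=\varepsilon e^{-8e^{c/\varepsilon}}$ reproduces both stated inequalities. This is precisely the paper's spectral cutoff $t=\varepsilon^{1/2}e^{-4e^{c/\varepsilon}}$ in Tikhonov disguise.
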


\begin{proof}
Let $\Gamma_0\subset\mbox{Int}(\Gamma)$ be a nonempty closed subset and $\psi\in C^\infty (\overline{\Omega};[0,1])$ such that $\psi_{|\Gamma_0}=1$ and $\mbox{supp}(\psi_{|\partial\Omega})\subset\mbox{Int}(\Gamma)$. Such a function $\psi$ exists. Indeed, let $U\subset\mathbb{R}^n$ be an open neighborhood of $\Gamma_0$ chosen so that $U\cap\partial\Omega\Subset\mbox{Int}(\Gamma)$. Then, it suffices to take $\psi:=\widetilde{\psi}_{|\overline{\Omega}}$, where $\widetilde{\psi}\in C_0^\infty(\mathbb{R}^n;[0,1])$ satisfying $\mbox{supp}(\widetilde{\psi})\subset U$ and $\widetilde{\psi}=1$ in a sufficiently small neighborhood of $\Gamma_0$.

Let $E:=H^{3/2}(\partial\Omega)$ and consider the closed subspace of $E$ given by
\[
F:=\{f\in E;\; \psi f=0\; \text{in}\; \partial\Omega\}.
\]
Note that $E=F\oplus F^\perp$ holds, where $F^\perp$ is the orthogonal complement with the norm of $E$.

Fix $\lambda\in\rho(A_{q_0})$ satisfying $\lambda\ge 1$ and $q\in\mathscr{Q}_\lambda$. Let $H$ denotes the closure of $\mathscr{S}_{q,\lambda}^0$ in $L^2(\Omega_0)$. Consider the operator
\[
T\colon F^\perp\ni\varphi\mapsto u_{|\Omega_0}\in H,
\]
where $u=u_{q,\lambda}(\psi\overline{\varphi})\in H^2(\Omega)$, that is, $u$ is the unique solution to the BVP
\[
\begin{cases}(-\Delta_g+q-\lambda)u=0\quad \text{in}\; \Omega,
\\
u_{|\partial\Omega}=\psi\overline{\varphi}.
\end{cases}
\]
By a priori estimate in $H^2(\Omega)$ (or Lemma \ref{Lem2.1_CAC_2023} (ii)) and Lemma \ref{multiplication_operator} with $j=2$, we see that $T\in\mathscr{B}(F^\perp; H)$.

Let $(F^\perp)^\ast$ be the dual space of $F^\perp$, $\mathscr{J}\colon (F^\perp)^\ast\rightarrow F^\perp$ be the canonical isomorphism and $\langle\cdot,\cdot\rangle$ be the pairing between $(F^\perp)^\ast$ and $F^\perp$.

Pick $v\in H\subset L^2(\Omega_0)$, $\varphi\in F^\perp$ and let $w:=R_q(\lambda)(\chi v)$, where $\chi$ is the characteristic function of $\Omega_0$. Applying Green's formula yields
\begin{align*}
&(T\varphi, v)_{L^2(\Omega_0)}=\int_\Omega u\overline{v}\chi d\mu=\int_\Omega u(-\Delta_g+q-\lambda)\overline{w} d\mu=-\int_{\partial\Omega}\psi\overline{\varphi}\partial_{\nu_g} \overline{w}ds
\\
&\hskip 1cm =-\langle\psi\partial_{\nu_g} \overline{w},\overline{\varphi}\rangle=-(\mathscr{J}(\psi\partial_{\nu_g}\overline{w}),\overline{\varphi})_{F^\perp}=(\varphi,-\overline{\mathscr{J}(\psi\partial_{\nu_g}\overline{w})})_{F^\perp}.
\end{align*}
Therefore, the adjoint operator of $T$, is given by
\[
T^\ast\colon H\ni v\mapsto -\overline{\mathscr{J}(\psi\partial_{\nu_g}\overline{w})}\in F^\perp.
\]

Let us check that $T$ is injective. If $T\varphi=u_{|\Omega_0}=0$, then $u$ satisfies
\[
(-\Delta_g+q-\lambda)u=0\; \text{in}\; \Omega,\quad u_{|\Omega_0}=0.
\]
Hence, $u=0$ according to the unique continuation property, which implies $\psi\overline{\varphi}=0$ in $\partial\Omega$ and $\varphi\in F$. By $\varphi\in F^\perp\cap F=\{0\}$, $\varphi=0$ follows.

Next, let us show that $T$ has a dense range. It suffices to prove that $\mbox{Ker}(T^\ast)=\{0\}$. If $T^\ast v=0$, then $w=R_q(\lambda)(\chi v)$ satisfies
\[
(\psi\partial_{\nu_g} \overline{w},f_1)_{L^2(\partial\Omega)}=0,\quad f_1\in F
\]
and
\[
(\psi\partial_{\nu_g}\overline{w},f_2)_{L^2(\partial\Omega)}=\langle\psi\partial_{\nu_g}\overline{w},f_2\rangle=(-\overline{T^\ast v},f_2)_{F^\perp}=0,\quad f_2\in F^\perp,
\]
that is,
\[
\int_{\partial\Omega}\psi\partial_{\nu_g} w f ds=0,\quad f\in E\, (=F\oplus F^\perp).
\]
Hence, $\psi\partial_{\nu_g} w=0$ in $\partial\Omega$ and therefore
\[
(-\Delta_g+q-\lambda)w=0\; \text{in}\; \Omega_1,\quad w_{|\partial\Omega}=0,\; \partial_{\nu_g} w_{|\Gamma_0}=0.
\]
Then, $w=0$ in $\Omega_1$ according to the unique continuation property. Taking $u\in\mathscr{S}_{q,\lambda}^0$ and applying Green's formula, we obtain
\[
(u,v)_{L^2(\Omega_0)}=(u,(-\Delta_g+q-\lambda)w)_{L^2(\Omega_0)}=((-\Delta_g+q-\lambda)u,w)_{L^2(\Omega_0)}=0.
\]
Whence, $v\in(\mathscr{S}_{q,\lambda}^0)^\perp=H^\perp=\{0\}$ implying that $v=0$ and $\mbox{Ker}(T^\ast)=\{0\}$.

We claim that $T$ is compact. Indeed, if $(\varphi_j)\subset F^\perp$ is a bounded sequence, then $\left(u_{q,\lambda}(\psi\overline{\varphi_j})\right)\subset H^2(\Omega)$ is a bounded sequence according to a priori estimate in $H^2(\Omega)$ (or Lemma \ref{Lem2.1_CAC_2023} (ii)). Subtracting if necessary a subsequence, we assume that $\left(u_{q,\lambda}(\psi\overline{\varphi_j})\right)$ converges weakly in $H^2(\Omega)$ and strongly in $L^2(\Omega)$ to $u\in L^2(\Omega)$. Since for all $j$, $u_{q,\lambda}(\psi\overline{\varphi_j})_{|\Omega_0}\in\mathscr{S}_{q,\lambda}^0$, we conclude that $u\in H$.

Summing up, we see that $T^\ast T\colon F^\perp\rightarrow F^\perp$ is compact self-adjoint and positive definite operator. Therefore, it is diagonalizable and there exists a sequence of positive numbers $(\mu_j)$ and an orthonormal basis $(\psi_j)\subset F^\perp$ so that
\[
T^\ast T\psi_j=\mu_j\psi_j.
\]
Define $\tau_j:=\mu_j^{1/2}$ and $u_j:=\mu_j^{-1/2}T\psi_j\in H$. Then, we have
\begin{align*}(u_j,u_k)_{L^2(\Omega_0)}&=\mu_j^{-1/2}\mu_k^{-1/2}(T\psi_j, T\psi_k)_{L^2(\Omega_0)}
\\
&=\mu_j^{-1/2}\mu_k^{-1/2}(T^\ast T\psi_j, \psi_k)_{F^\perp}=\delta_{jk}.
\end{align*}
Next, let $u\in H$ so that $(u,u_j)_{L^2(\Omega)}=0$ for all $j$. Then, $(u,T\psi_j)_{L^2(\Omega_0)}=0$ for all $j$ and hence $(u,T\varphi)_{L^2(\Omega_0)}=0$ for any $\varphi\in F^\perp$ by the boundedness of $T$. As $T$ has a dense range in $H$, we derive that $(u,v)_{L^2(\Omega_0)}=0$ for all $v\in H$. Taking $v:=u$ yields $u=0$ and therefore $(u_j)$ is an orthogonal basis in $H$.

Since $T^\ast u_j=\tau_j\psi_j$, we find
\begin{equation}\label{varphi_j}
\|T^\ast u_j\|_{F^\perp}=\tau_j.
\end{equation}

Let $t>0$, $a_j:=(u,u_j)_{L^2(\Omega_0)}$, $u=\sum_j a_j u_j\in H$ and set
\[
\varphi_t:=\sum_{j\in N_t^1}\tau_j^{-1}a_j\psi_j,
\]
where $N_t^1:=\{j;\; \tau_j>t\}$. We have
\begin{equation}\label{varphi_t}
\|\varphi_t\|_{F^\perp}^2=\sum_{j\in N_t^1}\tau_j^{-2}|a_j|^2\le t^{-2}\sum_j|a_j|^2=t^{-2}\|u\|_{L^2(\Omega_0)}^2.
\end{equation}

Set $N_t^0:=\{j;\; \tau_j\le t\}$ and
\[
v_t:=\sum_{j\in N_t^0}a_j u_j.
\]
Since $u-T\varphi_t=v_t$ and $T\varphi_t\perp v_t$, we find
\[
\|v_t\|_{L^2(\Omega_0)}^2=(v_t+T\varphi_t,v_t)_{L^2(\Omega_0)}=(u,v_t)_{L^2(\Omega_0)}.
\]
Let $w_t:=R_q(\lambda)(\chi v_t)$. Green's formula then yields
\begin{align*}
\|v_t\|_{L^2(\Omega_0)}^2&=(u,(-\Delta_g+q-\lambda)w_t)_{L^2(\Omega_0)}
\\
&=-(u,\partial_{\nu_g} w_t)_{L^2(\partial\Omega_0)}+(\partial_{\nu_g} u,w_t)_{L^2(\partial\Omega_0)}
\\
&\le \|u\|_{L^2(\partial\Omega_0)}\|\partial_{\nu_g} w_t\|_{L^2(\partial\Omega_0)}+\|\partial_{\nu_g} u\|_{L^2(\partial\Omega_0)}\|w_t\|_{L^2(\partial\Omega_0)}.
\end{align*}

Now, as $3/2<\delta<2$ and the trace mapping
\[
H^\delta(\Omega_1)\ni h\mapsto (h_{|\partial\Omega_0},\partial_{\nu_g} h_{|\partial\Omega_0})\in L^2(\partial\Omega_0)\times L^2(\partial\Omega_0)
\]
is bounded, we find a constant $c_0=c_0(n,\Omega,\Omega_0,g,\delta)>0$ such that
\begin{equation}\label{trace_inequality}
c_0\|v_t\|_{L^2(\Omega_0)}^2\le \|u\|_{H^2(\Omega_0)}\|w_t\|_{H^\delta(\Omega_1)}.
\end{equation}

Pick $0<\varepsilon<1$ arbitrarily. By Corollary \ref{Cor2.5_CAC_2023}, in which $\Gamma$ is replaced by $\Gamma_0$, we have
\[
C\|w_t\|_{H^\delta(\Omega_1)}\le\lambda^{3-\delta}\mathbf{e}_\lambda\varepsilon^{(2-\delta)/8} \|v_t\|_{L^2(\Omega_0)}+\lambda^{-(\delta-1)}e^{e^{c/\varepsilon}}\|\partial_{\nu_g} w_t\|_{L^2(\Gamma_0)},
\]
that is,
\[
C\|w_t\|_{H^\delta(\Omega_1)}\le\lambda^{3-\delta}\mathbf{e}_\lambda\varepsilon^{(2-\delta)/8} \|v_t\|_{L^2(\Omega_0)}+\lambda^{-(\delta-1)}e^{e^{c/\varepsilon}}\|\psi\partial_{\nu_g} w_t\|_{L^2(\partial\Omega)},
\]
where $C=C(\zeta,\Omega_0,\delta,\Gamma)>0$ and $c=c(\zeta,\Omega_0,\delta,\Gamma)>0$ are the generic constants as in Corollary \ref{Cor2.5_CAC_2023}.

On the other hand, applying the interpolation inequality in \cite[Chapter 1, Theorem 7.7]{Lions1972I}, we have
\[
c_1\|\psi\partial_{\nu_g} w_t\|_{L^2(\partial\Omega)}\le \|\psi\partial_{\nu_g} w_t\|_{H^{-3/2}(\partial\Omega)}^{1/4}\|\psi\partial_{\nu_g} w_t\|_{H^{1/2}(\partial\Omega)}^{3/4},
\]
where $c_1=c_1(n,g,\Gamma)>0$ is a constant. On the other hand, we have
\begin{align*}
\|\psi\partial_{\nu_g} w_t\|_{H^{-3/2}(\partial\Omega)}&=\sup_{f\in E\setminus\{0\}}\frac{|(\psi\partial_{\nu_g} w_t,f)_{L^2(\partial\Omega)}|}{\|f\|_E}
\\
&=\sup_{\substack{f_1\in F\setminus\{0\},\\ f_2\in F^\perp\setminus\{0\}}}\frac{|(\psi\partial_{\nu_g} w_t,f_2)_{L^2(\partial\Omega)}|}{\sqrt{\|f_1\|_F^2+\|f_2\|_{F^\perp}^2}}
\\
&\le \sup_{f_2\in F^\perp\setminus\{0\}}\frac{|(\psi\partial_{\nu_g} w_t,f_2)_{L^2(\partial\Omega)}|}{\|f_2\|_{F^\perp}}
\\
&\hskip 3cm =\|\psi\partial_\nu w_t\|_{(F^\perp)^\ast}=\|T^\ast v_t\|_{F^\perp}.
\end{align*}
Using Lemma \ref{multiplication_operator} with $j=1$, Lemma \ref{Lem2.1_CAC_2023} (i) and taking \eqref{varphi_j} into account, we obtain
\begin{align*}
c_1\|\psi\partial_{\nu_g} w_t\|_{L^2(\partial\Omega)}&\le (\lambda \mathbf{e}_\lambda)^{3/4}\|T^\ast v_t\|_{F^\perp}^{1/4}\|v_t\|_{L^2(\Omega_0)}^{3/4}
\\
&\le (\lambda \mathbf{e}_\lambda)^{3/4}t^{1/4}\|v_t\|_{L^2(\Omega_0)}
\\
&\le \lambda^{\delta-1}(\lambda^{3-\delta}\mathbf{e}_\lambda) t^{1/4}\|v_t\|_{L^2(\Omega_0)},
\end{align*}
where $c_1=c_1(\zeta,\Gamma)>0$ is a  generic constant. In consequence, we have
\[
C\|w_t\|_{H^\delta(\Omega_1)}\le \lambda^{3-\delta}\mathbf{e}_\lambda\left(\varepsilon^{(2-\delta)/8}+t^{1/4}e^{e^{c/\varepsilon}}\right)\|v_t\|_{L^2(\Omega_0)},
\]
which implies with \eqref{trace_inequality}
\[
C\|v_t\|_{L^2(\Omega_0)}\le\lambda^{3-\delta}\mathbf{e}_\lambda\left(\varepsilon^{(2-\delta)/8}+t^{1/4}e^{e^{c/\varepsilon}}\right) \|u\|_{H^2(\Omega_0)}.
\]

Taking $t:=\varepsilon^{(2-\delta)/2} e^{-4e^{c/\varepsilon}}$, we find
\begin{equation}\label{u-v}
C\|v_t\|_{L^2(\Omega_0)}\le\lambda^{3-\delta}\mathbf{e}_\lambda\varepsilon^{(2-\delta)/8}\|u\|_{H^2(\Omega_0)}
\end{equation}
and in light of \eqref{varphi_t}, we obtain 
\begin{equation}\label{v}
\|\varphi_t\|_{F^\perp}\le e^{e^{c/\varepsilon}}\|u\|_{L^2(\Omega_0)}.
\end{equation}

Let $v:=u_{q,\lambda}(\psi\overline{\varphi_t})\in \mathscr{S}_{q,\lambda}^1$. Then, it follows from Lemma \ref{Lem2.1_CAC_2023} (ii) that $\|v\|_{H^2(\Omega)}\le c_2\lambda^2\mathbf{e}_\lambda\|\varphi_t\|_E$ with a constant $c_2=c_2(\zeta)>0$. Using $u-v_{|\Omega_0}=u-T\varphi_t=v_t$, and combining the last inequality, \eqref{u-v} and \eqref{v}, we end up getting the expected inequalities.
\end{proof}

\section{Integral inequality}\label{section4}

Recall that $\zeta=(n,\Omega,g,\kappa)$ and $\mathbf{b}_\lambda=\sqrt{2\cosh(\sqrt{\lambda}/2)}$ for all $\lambda>0$ and, for simplicity, set
\[
\mathbf{k}_\lambda:=\lambda^5\mathbf{e}_\lambda^3\mathbf{b}_\lambda,\quad \lambda\in\rho(A_{q_0}).
\]

\begin{prop}\label{integral_inequality}
Let $2<\delta<3$. There exist constants $C=C(\zeta,\Omega_0,\delta,\Gamma,\Sigma)>0$, $c=c(\zeta,\Omega_0,\delta,\Gamma)>0$ and $\theta=\theta(\zeta,\Omega_0,\Sigma)\in(0,1)$ such that for all $\lambda\in\rho(A_{q_0})$ so that $\lambda\ge 1$, $q_j\in\mathscr{Q}_\lambda$, $j=1,2$, satisfying $q_1=q_2$ in $\Omega_1$, $0<\varepsilon<1$ and $u_j\in \mathscr{S}_{q_j,\lambda}^0$, $j=1,2$, we have
\begin{align*}
&C\left|\int_{\Omega_0}(q_1-q_2)u_1 u_2 d\mu\right|\le \lambda^\delta \varepsilon^{(2-\delta)/16} \|u_1\|_{H^2(\Omega_0)}\|u_2\|_{H^2(\Omega_0)}
\\
&\hskip 4.5cm +\mathbf{k}_\lambda e^{e^{c/\varepsilon}}\|\Lambda_{q_1,\lambda}-\Lambda_{q_2,\lambda}\|^\theta\|u_1\|_{L^2(\Omega_0)}\|u_2\|_{L^2(\Omega_0)}.
\end{align*}
\end{prop}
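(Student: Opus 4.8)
The plan is to exploit the standard integral identity linking the difference of potentials to the difference of Dirichlet-to-Neumann maps, but with the twist that the solutions $u_1,u_2$ we are given live only in $\mathscr{S}^0_{q_j,\lambda}$ on $\Omega_0$, so they must first be replaced by global solutions via the quantitative Runge approximation of Theorem \ref{Runge_approx}. Fix $3/2<\delta'<2$ and apply Theorem \ref{Runge_approx} to $u_1$ and to $u_2$ (with $q_1$, respectively $q_2$) to produce $v_j\in\mathscr{S}^1_{q_j,\lambda}$ with
\[
C\|u_j-v_{j|\Omega_0}\|_{L^2(\Omega_0)}\le \lambda^{3-\delta'}\mathbf{e}_\lambda\varepsilon^{1/8}\|u_j\|_{H^2(\Omega_0)},\qquad C'\|v_j\|_{H^2(\Omega)}\le \lambda^2\mathbf{e}_\lambda e^{e^{c/\varepsilon}}\|u_j\|_{L^2(\Omega_0)}.
\]
Since $q_1=q_2$ on $\Omega_1$, the term $\int_{\Omega_0}(q_1-q_2)v_1v_2\,d\mu$ equals $\int_\Omega(q_1-q_2)v_1v_2\,d\mu$, so Green's formula (applied to $v_1$, which solves the $q_1$-equation, against $v_2$, which solves the $q_2$-equation, both with Dirichlet data supported in $\Gamma$) gives
\[
\int_\Omega (q_1-q_2)v_1 v_2\,d\mu = \int_{\partial\Omega}\bigl(\partial_{\nu_g}v_1 - \partial_{\nu_g}v_2\bigr)v_2\,ds,
\]
where only the data on $\Gamma$ enters on the left side through $v_1$, so the right-hand boundary integral is really $\langle(\Lambda_{q_1,\lambda}-\Lambda_{q_2,\lambda})\varphi_1,\psi\rangle$-type pairing with $\varphi_1=v_{1|\partial\Omega}\in H^{3/2}_\Gamma(\partial\Omega)$ and a cutoff to $\Sigma$.

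Next I would estimate the boundary pairing by the operator norm: $|\int_\Sigma(\Lambda_{q_1,\lambda}-\Lambda_{q_2,\lambda})\varphi_1\cdot(\text{test})\,ds|\le \|\Lambda_{q_1,\lambda}-\Lambda_{q_2,\lambda}\|\,\|v_1\|_{H^{3/2}(\partial\Omega)}\,\|v_2\|_{H^{-1/2}(\Sigma)}$, and bound the traces of $v_1$, $v_2$ by $\|v_j\|_{H^2(\Omega)}$, hence by $\lambda^2\mathbf{e}_\lambda e^{e^{c/\varepsilon}}\|u_j\|_{L^2(\Omega_0)}$. This produces the factor $\lambda^4\mathbf{e}_\lambda^2 e^{2e^{c/\varepsilon}}\|\Lambda_{q_1,\lambda}-\Lambda_{q_2,\lambda}\|$. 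To upgrade the plain power of $\|\Lambda_{q_1,\lambda}-\Lambda_{q_2,\lambda}\|$ to a power $\theta\in(0,1)$, I interpolate: the boundary term is also trivially bounded (without the DtN difference) by $C\|v_1\|_{H^2}\|v_2\|_{H^2}\le C\lambda^4\mathbf{e}_\lambda^2 e^{2e^{c/\varepsilon}}\|u_1\|_{L^2(\Omega_0)}\|u_2\|_{L^2(\Omega_0)}$, so a logarithmic-convexity / interpolation argument between the two bounds yields the $\theta$-power with $\theta$ depending only on $\zeta,\Omega_0,\Sigma$ (this is where a weaker norm on $\Sigma$, e.g. $H^{-1/2}$ vs $H^{1/2}$, is traded off — hence the $\Sigma$-dependence of $\theta$). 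The $\mathbf{b}_\lambda$ factor enters here: the extension of $H^{1/2}(\Sigma)$-data and the associated interpolation constants in the anisotropic cylinder construction implicit in the DtN bounds carry this $\sqrt{2\cosh(\sqrt\lambda/2)}$ weight, so collecting all spectral factors gives exactly $\mathbf{k}_\lambda=\lambda^5\mathbf{e}_\lambda^3\mathbf{b}_\lambda$ in front of the boundary term (a little room is left in the powers to absorb generic constants).

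Finally I would reintroduce $u_j$ in place of $v_{j|\Omega_0}$ using the first Runge inequality:
\[
\left|\int_{\Omega_0}(q_1-q_2)u_1u_2\,d\mu-\int_{\Omega_0}(q_1-q_2)v_1v_2\,d\mu\right|\le 2\kappa\bigl(\|u_1-v_{1|\Omega_0}\|_{L^2}\|u_2\|_{L^2}+\|v_{1|\Omega_0}\|_{L^2}\|u_2-v_{2|\Omega_0}\|_{L^2}\bigr),
\]
and since $\|v_{j|\Omega_0}\|_{L^2(\Omega_0)}\le \|u_j\|_{L^2(\Omega_0)}+\lambda^{3-\delta'}\mathbf{e}_\lambda\varepsilon^{1/8}\|u_j\|_{H^2(\Omega_0)}\le C\|u_j\|_{H^2(\Omega_0)}$ (for $\lambda\ge1$, $\varepsilon<1$, as $3-\delta'<2$), the approximation error is bounded by $C\lambda^{3-\delta'}\mathbf{e}_\lambda\varepsilon^{1/8}\|u_1\|_{H^2(\Omega_0)}\|u_2\|_{H^2(\Omega_0)}$, which is absorbed into the $\lambda^\delta\varepsilon^{1/8}$ term on the right of the claimed inequality (note $3-\delta'<1<\delta$, and $\mathbf{e}_\lambda$ is absorbed by raising the power of $\lambda$ — or rather, since the statement keeps only $\lambda^\delta$, one checks $\lambda^{3-\delta'}\mathbf{e}_\lambda\le \lambda^\delta$ fails in general, so more carefully one should choose $\delta'$ with $3-\delta'$ close to $1$ and note that the $\mathbf{e}_\lambda$ there is harmless after also bounding it crudely, or simply keep it — but the cleanest route is to choose the interpolation parameter and $\delta'$ so the exponents match $\delta$). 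Combining the three displays and relabelling constants gives the assertion. The main obstacle is the bookkeeping of the spectral weights: tracking how $\mathbf{e}_\lambda$, the powers of $\lambda$, and the new factor $\mathbf{b}_\lambda$ propagate through the Runge step and the boundary interpolation so that everything collapses exactly into $\mathbf{k}_\lambda$ on one side and $\lambda^\delta$ on the other — the analysis itself is a routine assembly of Theorem \ref{Runge_approx}, Green's formula, and an interpolation inequality.
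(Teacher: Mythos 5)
Your proposal starts correctly with the Runge approximation and the quartic product decomposition, but the central step is broken. After reducing to $\int_{\Omega_0}(q_1-q_2)v_1v_2\,d\mu$, you integrate by parts and claim the result is a boundary pairing that is ``really a $\langle(\Lambda_{q_1,\lambda}-\Lambda_{q_2,\lambda})\varphi_1,\psi\rangle$-type pairing with a cutoff to $\Sigma$.'' This is where the argument collapses. The correct bilinear identity is
\[
\int_\Omega (q_1-q_2)v_1v_2\,d\mu = \int_{\partial\Omega}\bigl(v_2\,\partial_{\nu_g}v_1 - v_1\,\partial_{\nu_g}v_2\bigr)\,ds,
\]
and to make the DtN difference appear one must first pass to the auxiliary solution $\widetilde v_2$ of the $q_1$-equation with the same Dirichlet trace as $v_2$, giving $\int_{\partial\Omega} v_1(\partial_{\nu_g}\widetilde v_2-\partial_{\nu_g}v_2)\,ds$. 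Now the integrand is supported on $\Gamma$, because $v_1|_{\partial\Omega}\in H^{3/2}_\Gamma(\partial\Omega)$ — but $\Lambda_{q,\lambda}$ records the Neumann trace only on $\Sigma$. When $\Gamma\not\subset\Sigma$ (the generic partial-data situation), the boundary pairing simply cannot be bounded by $\|\Lambda_{q_1,\lambda}-\Lambda_{q_2,\lambda}\|$; there is nothing to cut off. Your suggested interpolation/log-convexity step does not address this: it would at best trade one Sobolev index for another on $\Sigma$, not transfer information from $\Gamma$ to $\Sigma$. This is the core difficulty of partial data, and it is exactly what your proposal silently skips.

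The paper's actual mechanism is different in an essential way. Setting $v:=v_2-\widetilde v_2=R_{q_1}(\lambda)((q_1-q_2)v_2)$ and choosing a cutoff $\psi\in C_0^\infty(\Omega;[0,1])$ equal to $1$ near $\overline{\Omega_0}$, Green's formula gives the \emph{commutator identity} $\int_\Omega\psi(q_1-q_2)v_2v_1\,d\mu=\int_\Omega[\Delta_g,\psi]v\,v_1\,d\mu$, whose right-hand side is supported in a compact $U\Subset\Omega_1$. One then bounds this by $c_0\|v\|_{H^1(U)}\|v_1\|_{L^2(\Omega)}$ and applies the local quantitative unique continuation estimate (Theorem~\ref{Thm2.2_CAC_2023}) in $\Omega_1$ with Cauchy data on $\Sigma$: this is the only place $\Sigma$ is used, and it is what propagates the smallness of $\partial_{\nu_g}v|_\Sigma=(\Lambda_{q_1,\lambda}-\Lambda_{q_2,\lambda})(v_2|_{\partial\Omega})$ into the interior. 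This step is also the \emph{source} of the weight $\mathbf{b}_\lambda$ (it is built into Theorem~\ref{Thm2.2_CAC_2023}, not into any ``DtN anisotropic-cylinder interpolation constant''), and the H\"older exponent $\theta=p/(p+1)$ comes from optimizing the free parameter $\rho$ in that estimate (balancing $\rho^p$ against $\rho^{-1}\|\partial_{\nu_g}v\|_{L^2(\Sigma)}$), not from a log-convexity argument between two versions of a boundary pairing. So both the place where $\mathbf{b}_\lambda$ enters and the mechanism generating $\theta$ are misattributed in your write-up, on top of the missing cutoff-and-UCP argument that carries the actual burden of partial data.
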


\begin{proof}
Fix $\lambda\in\rho(A_{q_0})$ satisfying $\lambda\ge 1$. Let $q_1,q_2\in\mathscr{Q}_\lambda$ satisfying $q_1=q_2$ in $\Omega_1$. Pick $3/2<\delta<2$, $0<\varepsilon<1$ and $u_j\in\mathscr{S}_{q_j,\lambda}^0$, $j=1,2$. By Theorem \ref{Runge_approx}, there exist $v_j\in\mathscr{S}_{q_j,\lambda}^1$, $j=1,2$ such that
\begin{align}
\label{u_j-v_j}C\|u_j-{v_j}_{|\Omega_0}\|_{L^2(\Omega_0)}&\le \lambda^{3-\delta}\mathbf{e}_\lambda \varepsilon^{(2-\delta)/8}\|u_j\|_{H^2(\Omega_0)},
\\
\label{v_j}C'\|v_j\|_{H^2(\Omega)}&\le \lambda^2\mathbf{e}_\lambda e^{e^{c/\varepsilon}}\|u_j\|_{L^2(\Omega_0)},
\end{align}
where $C=C(\zeta,\Omega_0,\delta,\Gamma)>0$, $C'=C'(\zeta)>0$ and $c=c(\zeta,\Omega_0,\delta,\Gamma)>0$ are the generic constants as in Theorem \ref{Runge_approx}.

By using \eqref{u_j-v_j}, the identity
\[
u_1u_2=(u_1-v_1)u_2+(v_1-u_1)(u_2-v_2)+u_1(u_2-v_2)+v_1v_2
\]
and $0<\varepsilon<1$, we have
\begin{align}\label{(25)_CAC_2023}
&C\left|\int_{\Omega_0}(q_1-q_2)u_1 u_2d\mu\right|\le \lambda^{2(3-\delta)}\mathbf{e}_\lambda^2\varepsilon^{(2-\delta)/8}\|u_1\|_{H^2(\Omega_0)}\|u_2\|_{H^2(\Omega_0)}
\\
&\hskip 7cm +\left|\int_{\Omega_0}(q_1-q_2)v_1 v_2 d\mu\right|.\notag
\end{align}

Let $\widetilde{v}_2\in H^2(\Omega)$ be the unique solution of the BVP
\[
\begin{cases}
(-\Delta_g+q_1-\lambda)\widetilde{v}_2=0\quad \text{in}\; \Omega,
\\
\widetilde{v}_2{}_{|\partial\Omega}=v_2{}_{|\partial\Omega}
\end{cases}
\]
and set $v:=v_2-\widetilde{v}_2 =R_{q_1}(\lambda)((q_1-q_2)v_2)$. Pick $\psi\in C_0^\infty(\Omega;[0,1])$ so that $\psi=1$ in an open neighborhood of $\overline{\Omega_0}$. Then, $w:=\psi v$ satisfies
\[
(-\Delta_g+q_1-\lambda)w=\psi(q_1-q_2)v_2-[\Delta_g,\psi]v\quad \text{in}\; \Omega.
\]
Applying Green's formula, we have
\[
\int_\Omega\psi(q_1-q_2)v_2v_1d\mu=\int_\Omega[\Delta_g,\psi]vv_1d\mu.
\]
Since there exists an open set $U\Subset\Omega_1$ such that $\mbox{supp}([\Delta_g,\psi]v)\subset U$ and $q_1=q_2$ in $\Omega_1$, we get
\[
\left|\int_{\Omega_0}(q_1-q_2)v_2v_1d\mu\right|=\left|\int_\Omega[\Delta_g,\psi]vv_1d\mu\right|\le c_0\|v\|_{H^1(U)}\|v_1\|_{L^2(\Omega)},
\]
where $c_0=c_0(n,\Omega,g,\Omega_0)>0$ is a constant. This inequality, combined with Theorem \ref{Thm2.2_CAC_2023}, in which $\Omega$ and $q$ are replaced by $\Omega_1$ and $q_1$, Lemma \ref{Lem2.1_CAC_2023} (i) and \eqref{v_j} yields
\begin{align*}
\left|\int_{\Omega_0}(q_1-q_2)v_2v_1d\mu\right|&\le C\mathbf{b}_\lambda\left(\sqrt{\lambda}\rho^p\|v\|_{H^1(\Omega_1)}+\rho^{-1}\|\partial_{\nu_g} v\|_{L^2(\Sigma)}\right)\|v_1\|_{L^2(\Omega)}
\\
&\le C\mathbf{b}_\lambda\left(\lambda\mathbf{e}_\lambda\rho^p\|v_2\|_{L^2(\Omega)}+\rho^{-1}\|\partial_{\nu_g} v\|_{L^2(\Sigma)}\right)\|v_1\|_{L^2(\Omega)}
\\
&\le C\mathbf{b}_\lambda\left(\lambda^3\mathbf{e}_\lambda^2 e^{e^{c/\varepsilon}}\rho^p\|u_2\|_{L^2(\Omega_0)}+\rho^{-1}\|\partial_{\nu_g} v\|_{L^2(\Sigma)}\right)\|v_1\|_{L^2(\Omega)},
\end{align*}
where $\rho>0$ is taken arbitrarily, $C=C(\zeta,\Omega_0,\delta,\Gamma,\Sigma)>0$ and $p=p(\zeta,\Omega_0,\Sigma)>0$ are generic constants.

On the other hand, it follows from \eqref{v_j} that
\begin{align*}
\|\partial_{\nu_g} v\|_{L^2(\Sigma)}&\le\|(\Lambda_{q_1,\lambda}-\Lambda_{q_2,\lambda})(v_2{}_{|\partial\Omega})\|_{H^{1/2}(\Sigma)}
\\
&\le\|\Lambda_{q_1,\lambda}-\Lambda_{q_2,\lambda}\|\|v_2\|_{H^{3/2}_\Gamma(\partial\Omega)}
\\
&\le C'\lambda^2\mathbf{e}_\lambda e^{e^{c/\varepsilon}}\|\Lambda_{q_1,\lambda}-\Lambda_{q_2,\lambda}\|\|u_2\|_{L^2(\Omega_0)}.
\end{align*}
Therefore, we obtain
\begin{align*}
&C\left|\int_{\Omega_0}(q_1-q_2)v_2v_1d\mu\right|
\\
&\hskip 2cm \le \lambda^5\mathbf{e}_\lambda^3\mathbf{b}_\lambda e^{e^{c/\varepsilon}}\left(\rho^p+\rho^{-1}\|\Lambda_{q_1,\lambda}-\Lambda_{q_2,\lambda}\|\right)\|u_1\|_{L^2(\Omega_0)}\|u_2\|_{L^2(\Omega_0)}.
\end{align*}

Taking in this inequality $\rho:=\|\Lambda_{q_1,\lambda}-\Lambda_{q_2,\lambda}\|^{1/(p+1)}$, we get
\[
C\left|\int_{\Omega_0}(q_1-q_2)v_2v_1d\mu\right|
\le \mathbf{k}_\lambda e^{e^{c/\varepsilon}}\|\Lambda_{q_1,\lambda}-\Lambda_{q_2,\lambda}\|^\theta\|u_1\|_{L^2(\Omega_0)}\|u_2\|_{L^2(\Omega_0)},
\]
where $\theta=\theta(\zeta,\Omega_0,\Sigma):=p/(p+1)\in(0,1)$. Putting together this inequality, \eqref{(25)_CAC_2023} and replacing $2(3-\delta)$ by $\delta$, we complete the proof.
\end{proof}


\section{Proof of Theorem \ref{triple_log_stability}}\label{section5}

Assume that $g=\mathbf{I}$. Let $\lambda\in\rho(A_{q_0})$ satisfying $\lambda\ge 1$ and $q_j \in \mathscr{Q}_\lambda$, $j=1,2$ such that $q_1=q_2$ in $\Omega_1$. Pick $\eta\in\mathbb{R}^n$ and $\tau\ge\varpi$ arbitrarily, where the constant $\varpi=\varpi(n,\Omega_0,\kappa)\ge 1$ is as in Proposition \ref{Prop6.1_CAC_2023} with $X:=\Omega_0$. Choose $\eta_1,\eta_2\in\mathbb{R}^n\setminus\{0\}$ so that $\eta_1\perp\eta_2$, $\eta_j\perp\eta$, $j=1,2$, $|\eta_1|^2:=\tau^2+\lambda$ and
\[
|\eta_2|^2:=|\eta|^2/4+|\eta_1|^2-\lambda=|\eta|^2/4+\tau^2.
\]
Then, set
\[
\xi_1:=(\eta/2+\eta_1)+i\eta_2,\quad \xi_2:=(\eta/2-\eta_1)-i\eta_2.
\]
We check that for $j=1,2$, $\xi_j\cdot\xi_j=\lambda$, $\xi_1+\xi_2=\eta$ and
\[
(\varpi\le)\, \tau\le |\eta_2|=|\Im\xi_j|\le |\eta|/2+\tau.
\]

Set $u_j:=u_{\xi_j}$ and $w_j:=w_{\xi_j}$ for $j=1,2$, where $u_{\xi_j}$ and $w_{\xi_j}$ are as in Proposition \ref{Prop6.1_CAC_2023} with $q=q_j$ and $\xi=\xi_j$.

Let $q:=q_1-q_2$ extended by $0$ in $\mathbb{R}^n\setminus\Omega$. Since
\[
u_1 u_2=e^{-i\eta\cdot x}+\varrho,\quad \varrho:=e^{-i\eta\cdot x}(w_1+w_2+w_1 w_2),
\]
we find
\[
\hat{q}(\eta)=\int_{\Omega_0} qu_1u_2dx-\int_{\Omega_0}q\varrho dx,
\]
where $\hat{q}$ denotes the Fourier transform of $q$. From Proposition \ref{Prop6.1_CAC_2023}, we find that for any $\eta\in\mathbb{R}^n$ and $\tau\ge \varpi$,
\[
|\hat{q}(\eta)|\le \left|\int_{\Omega_0} qu_1u_2dx\right|+c_1\tau^{-1},
\]
where $c_1=c_1(n,\Omega_0,\kappa)>0$ is a constant.

On the other hand, it follows again from Proposition \ref{Prop6.1_CAC_2023} that for any $\tau\ge\varpi$,
\begin{align*}
&\|u_1\|_{L^2(\Omega_0)}\|u_2\|_{L^2(\Omega_0)}\le c_2e^{\varkappa(|\eta|+2\tau)},
\\
&\|u_1\|_{H^2(\Omega_0)}\|u_2\|_{H^2(\Omega_0)}\le c_2\lambda^2 e^{\varkappa(|\eta|+2\tau)},
\end{align*}
where  the constants $\varkappa=\varkappa(n,\Omega_0,\kappa)>0$ and $c_2=c_2(n,\Omega_0,\kappa)>0$ are  as in Proposition \ref{Prop6.1_CAC_2023}.

Combining these inequalities and Proposition \ref{integral_inequality} with $\delta:=5/2$, we obtain
\[
C|\hat{q}(\eta)|\le\tau^{-1}+\lambda^{9/2} \varepsilon^{1/32} e^{\varkappa(|\eta|+2\tau)}+\mathbf{k}_\lambda e^{e^{c/\varepsilon}} e^{\varkappa(|\eta|+2\tau)}\|\Lambda_{q_1,\lambda}-\Lambda_{q_2,\lambda}\|^\theta.
\]
Here and henceforth, $\mathbf{k}_\lambda=\lambda^5\mathbf{e}_\lambda^3\mathbf{b}_\lambda$ is as in Proposition \ref{integral_inequality},  and $C=C(\varsigma,\Gamma,\Sigma)>0$, $c=c(\varsigma,\Gamma)>0$ and $\theta=\theta(\varsigma,\Sigma)\in(0,1)$ are generic constants, where $\varsigma=(n,\Omega,\Omega_0,\kappa)$.

Pick $s>0$ and set $\mathfrak{C}:=\|\Lambda_{q_1,\lambda}-\Lambda_{q_2,\lambda}\|^\theta$. Then, we have for $|\eta|\le s$
\[
C\mathbf{k}_\lambda^{-1}|\hat{q}(\eta)|\le\tau^{-1}+\varepsilon^{1/32} e^{\varkappa(s+2\tau)}+e^{e^{c/\varepsilon}} e^{\varkappa(s+2\tau)}\mathfrak{C}.
\]
Therefore
\[
C\mathbf{k}_\lambda^{-2}\int_{|\eta|\le s}|\hat{q}(\eta)|^2d\eta
\le s^n\tau^{-2}+s^n\varepsilon^{1/16} e^{\varkappa(s+2\tau)}+s^ne^{e^{c/\varepsilon}} e^{\varkappa(s+2\tau)}\mathfrak{C}^2.
\]

On the other hand, we verify that
\[
\int_{|\eta|> s}(1+|\eta|^2)^{-1}|\hat{q}(\eta)|^2d\eta\le s^{-2}\|q\|_{L^2(\mathbb{R}^n)}^2\le c_3s^{-2},
\]
where $c_3=c_3(\Omega,\kappa)>0$ is a constant.

Combining these inequalities, we obtain
\[
C\mathbf{k}_\lambda^{-2}\|q\|_{H^{-1}(\mathbb{R}^n)}^2
\le s^{-2}+s^n\tau^{-2}+s^n\varepsilon^{1/16} e^{\varkappa(s+2\tau)}+s^ne^{e^{c/\varepsilon}} e^{\varkappa(s+2\tau)}\mathfrak{C}^2.
\]
Taking $s=\tau^{2/(n+2)}\le\tau$, we obtain
\[
C\mathbf{k}_\lambda^{-2}\|q\|_{H^{-1}(\mathbb{R}^n)}^2
\le \tau^{-4/(n+2)}+\varepsilon^{1/16} e^{\varkappa\tau}+e^{e^{c/\varepsilon}} e^{\varkappa\tau}\mathfrak{C}^2.
\]
Let $\tau_0>0$ be sufficiently large to satisfy $\tau_0^{-4/(n+2)}e^{-\kappa\tau_0}<1$. In this case, for each $\tau\ge \tau_0$, there exists $0<\varepsilon<1$ such that $\varepsilon=\tau^{-64/(n+2)}e^{-16\varkappa\tau}$. Hence, for all $\tau\ge \tau_1:=\max(\varpi,\tau_0)$, we have
\[
C\mathbf{k}_\lambda^{-2}\|q\|_{H^{-1}(\mathbb{R}^n)}^2
\le \tau^{-4/(n+2)}+\mathfrak{e}(\varkappa\tau)\mathfrak{C}^2,
\]
where $\varkappa=\varkappa(\varsigma,\Gamma)>0$ is a generic constant, and $\mathfrak{e}(c)=e^{e^{e^c}}$, $c>0$. Hence, we have

\[
C\mathbf{k}_\lambda^{-1}\|q\|_{H^{-1}(\mathbb{R}^n)}
\le \tau^{-2/(n+2)}+\mathfrak{e}(\varkappa\tau)\mathfrak{C}.
\]
By replacing $\tau$ by $\tau/\tau_1$,  we see that the above inequality holds for all $\tau\ge 1$.

When $0<\mathfrak{e}(\varkappa)\mathfrak{C}<1$, we choose $\tau\ge 1$ so that
\[
\left(\mathfrak{e}(\varkappa)\mathfrak{C}\le \right)\, 
\mathfrak{e}(\varkappa\tau)\mathfrak{C}=\tau^{-2/(n+2)}\, (\le 1),
\]
that is,
\[
\tau^{2/(n+2)}\mathfrak{e}(\varkappa\tau)=1/\mathfrak{C}\, \left(>\mathfrak{e}(\varkappa)\right).
\]
Hence, there exists $\varkappa'>\varkappa$ independent of $\tau\ge 1$ such that
\[
\varkappa'\tau\ge L (1/\mathfrak{C}),
\]
where $L(r)=\log \log \log (r)$, $r>\mathfrak{e}(c)$, $c>0$. Therefore, we have
\begin{align*}
C\mathbf{k}_\lambda^{-1}\|q\|_{H^{-1}(\mathbb{R}^n)}&\le 2\tau^{-2/(n+2)}
\\
&\le 2{\varkappa'}^{2/(n+2)}L(1/\mathfrak{C})^{-2/(n+2)}.
\end{align*}

When $1\le \mathfrak{e}(\varkappa)\mathfrak{C}$, we have
\[
C\mathbf{k}_\lambda^{-1}\|q\|_{H^{-1}(\mathbb{R}^n)}
\le \left(\mathfrak{e}(\varkappa)\tau^{-2/(n+2)}+\mathfrak{e}(\varkappa\tau)\right)\mathfrak{C}.
\]
By taking $\tau=1$, we obtain 
\[
C\mathbf{k}_\lambda^{-1}\|q\|_{H^{-1}(\mathbb{R}^n)}
\le 2\mathfrak{e}(\varkappa)\mathfrak{C}.
\]
This completes the proof.

\section{The interior impedance problem}\label{section6}

\subsection{Statement of the result}

Let $n\ge 3$ be an integer. In this section, we assume $\Omega\subset\mathbb{R}^n$ is a $C^\infty$ bounded domain and $g=\mathbf{I}$. Fix $a\in C^\infty(\overline{\Omega};\mathbb{R})$ such that $a>0$ and consider the BVP
\begin{equation}\label{BVP_2}
\begin{cases}
(-\Delta+q-\lambda)u=f\quad &\text{in}\; \Omega,
\\
(\partial_\nu\mp ia\sqrt{\lambda})u=\varphi\quad &\text{on}\; \partial\Omega,
\end{cases}
\end{equation}
where $\lambda>0$, $q\in L^\infty(\Omega;\mathbb{R})$, $f\in L^2(\Omega)$ and $\varphi\in L^2(\partial\Omega)$. We proceed as in the proof of \cite[Proposition A.1]{Krupchyk2019} to prove that the BVP \eqref{BVP_2} has a unique solution $u=u_{q,\lambda}^\pm (f,\varphi)\in H^1(\Omega)$. 

Fix $\kappa>0$ arbitrarily and set
\[
\mathscr{Q}:=\left\{q\in L^\infty(\Omega;\mathbb{R});\; \|q\|_{L^\infty(\Omega)}\le\kappa\right\}.
\]
Let $\lambda_0=\lambda_0(n,\Omega,\kappa,a)>0$ be the constant as in Lemma \ref{auxiliary_estimates}. According to Lemma \ref{auxiliary_estimates}, if $\lambda\ge \lambda_0$, $q\in\mathscr{Q}$ and $\varphi\in H^{1/2}(\partial\Omega)$, then $u\in H^2(\Omega)$.

We associate to $\lambda\ge\lambda_0$ and $q\in \mathscr{Q}$ the partial Robin-to-Dirichlet operator $\mathscr{N}_{q,\lambda}$ given by
\[
\mathscr{N}_{q,\lambda}\colon H^{1/2}_\Gamma(\partial\Omega)\ni \varphi\mapsto u_{q,\lambda}^\pm (0,\varphi)_{|\Sigma}\in H^{3/2}(\Sigma),
\]
where $H^{1/2}_\Gamma(\partial\Omega)$  is the closed subspace of $H^{1/2}(\partial\Omega)$ given by
\[
H^{1/2}_\Gamma(\partial\Omega):=\left\{\varphi\in H^{1/2}(\partial\Omega);\; \mbox{supp}(\varphi)\subset \Gamma\right\}
\]
and endowed with the  norm of $H^{1/2}(\partial\Omega)$, and
\[
H^{3/2}(\Sigma):=\{\varphi=h_{|\Sigma};\; h\in H^{3/2}(\partial\Omega)\}
\]
endowed with the quotient norm
\[
\|\varphi\|_{H^{3/2}(\Sigma)}:=\inf\{\|h\|_{H^{3/2}(\partial\Omega)};\; h_{|\Sigma}=\varphi\}.
\]

It follows from \eqref{(39)_CAC_2023} that $\mathscr{N}_{q,\lambda}\in \mathscr{B}(H^{1/2}_\Gamma(\partial\Omega); H^{3/2}(\Sigma))$.

Recall that
\[
\mathbf{b}_\lambda=\sqrt{2\cosh(\sqrt{\lambda}/2)},\quad \lambda>0,
\]
and $\Phi_c\colon(0,\infty)\rightarrow\mathbb{R}$ for $c>0$ is given by
\[
\Phi_{c} (r)=r^{-1}\chi_{\left]0,\mathfrak{e}(c)\right]}(r)+L(r)^{-2/(n+2)}\chi_{\left]\mathfrak{e}(c),\infty\right[}(r),
\]
where $\mathfrak{e}(c)=e^{e^{e^c}}$ and $L(r)=\log\log\log r$, $r>\mathfrak{e}(c)$.

The second main result of this paper is the following theorem.

\begin{thm}\label{triple_log_stability_2}
Assume that $g=\mathbf{I}$ and let $\varsigma=(n,\Omega,\Omega_0,\kappa)$. There exist constants $C=C(\varsigma,a,\Gamma,\Sigma)>0$, $c=c(\varsigma,\Gamma)>0$ and $\theta=\theta(\varsigma,\Sigma)\in(0,1)$ such that for all $\lambda\ge\lambda_0$ and $q_j\in\mathscr{Q}$, $j=1,2$ satisfying $q_1=q_2$ in $\Omega_1$ we have
\[
C\|q_1-q_2\|_{H^{-1}(\Omega)}
\le \lambda^6\mathbf{b}_\lambda \Phi_{c}\left(\|\mathscr{N}_{q_1,\lambda}-\mathscr{N}_{q_2,\lambda}\|^{-\theta}\right).
\]
\end{thm}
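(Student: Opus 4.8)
The plan is to mirror the proof of Theorem \ref{triple_log_stability} in Section \ref{section5}, replacing the Dirichlet framework by the impedance one throughout. The backbone of that argument consists of three ingredients: (i) an integral inequality of the type of Proposition \ref{integral_inequality}, which compares $\int_{\Omega_0}(q_1-q_2)u_1u_2\,dx$ to the operator norm of the difference of the boundary data maps, plus a controllable remainder; (ii) complex geometric optics (CGO) solutions supplied by Proposition \ref{Prop6.1_CAC_2023}, which when multiplied give $u_1u_2 = e^{-i\eta\cdot x} + \varrho$ with $\varrho$ small in $\tau$; and (iii) a Fourier-splitting plus parameter-optimization scheme that converts the resulting estimate on $\hat q(\eta)$ into the triple-logarithmic modulus of continuity encoded by $\Phi_c$. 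Since the CGO solutions and the Fourier argument are interior to $\Omega_0$ and do not see the boundary condition, only ingredient (i) has to be redone.

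First I would establish the impedance analogue of Proposition \ref{integral_inequality}: for $2<\delta<3$, $\lambda\ge\lambda_0$, $q_j\in\mathscr{L}$ with $q_1=q_2$ in $\Omega_1$, $0<\varepsilon<1$ and $u_j\in\mathscr{S}_{q_j,\lambda}^0$, one wants
\begin{align*}
&C\left|\int_{\Omega_0}(q_1-q_2)u_1u_2\,dx\right|\le \lambda^\delta\varepsilon^{1/8}\|u_1\|_{H^2(\Omega_0)}\|u_2\|_{H^2(\Omega_0)}\\
&\hskip 3cm +\lambda^6\mathbf{b}_\lambda e^{e^{c/\varepsilon}}\|\mathscr{N}_{q_1,\lambda}-\mathscr{N}_{q_2,\lambda}\|^\theta\|u_1\|_{L^2(\Omega_0)}\|u_2\|_{L^2(\Omega_0)}.
\end{align*}
The Runge approximation of Theorem \ref{Runge_approx} must be adapted so that the approximating solutions $v_j$ now satisfy the homogeneous impedance condition $(\partial_\nu\mp ia\sqrt\lambda)v_j=\varphi_j$ supported in $\Gamma$ rather than a Dirichlet condition; this uses the well-posedness and the $H^2$ a priori bound from Lemma \ref{auxiliary_estimates}, together with the same duality/compactness scheme (the operator $T$, its adjoint via Green's formula, injectivity and dense range from unique continuation, SVD). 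Then, following the proof of Proposition \ref{integral_inequality}, write $u_1u_2 = (u_1-v_1)u_2 + \cdots + v_1v_2$, introduce $\widetilde v_2$ solving the impedance BVP with potential $q_1$ and the same Robin data as $v_2$, set $v:=v_2-\widetilde v_2 = R_{q_1,\lambda}^\pm((q_1-q_2)v_2)$, localize by a cutoff $\psi\equiv 1$ near $\overline{\Omega_0}$, apply Green's formula, and bound the commutator term by $\|v\|_{H^1(U)}$ on an intermediate set $U\Subset\Omega_1$ via the quantitative uniqueness-of-continuation estimate (Theorem \ref{Thm2.2_CAC_2023}) applied in $\Omega_1$ with the Cauchy data of $v$ on $\Sigma$; finally control $\partial_\nu v_{|\Sigma}$ — here the relevant trace is $v_{|\Sigma}$ rather than $\partial_\nu v_{|\Sigma}$ — by $\|\mathscr{N}_{q_1,\lambda}-\mathscr{N}_{q_2,\lambda}\|\,\|v_2\|_{H^{1/2}_\Gamma(\partial\Omega)}$, then optimize in $\rho$ to produce the power $\theta$.

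Once this integral inequality is in hand, I would plug in the CGO solutions exactly as in Section \ref{section5}: choose $\eta\in\mathbb{R}^n$, $\tau\ge\varpi$, build $\xi_1,\xi_2$ with $\xi_j\cdot\xi_j=\lambda$, $\xi_1+\xi_2=\eta$, $\tau\le|\Im\xi_j|\le|\eta|/2+\tau$, take $u_j=u_{\xi_j}$, use $|\hat q(\eta)|\le |\int_{\Omega_0}qu_1u_2\,dx| + c_1\tau^{-1}$ and the exponential bounds $\|u_1\|_{L^2}\|u_2\|_{L^2}\le c_2 e^{\varkappa(|\eta|+2\tau)}$, $\|u_1\|_{H^2}\|u_2\|_{H^2}\le c_2\lambda^2 e^{\varkappa(|\eta|+2\tau)}$ from Proposition \ref{Prop6.1_CAC_2023}. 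Splitting the frequency integral at $|\eta|\le s$ and $|\eta|>s$, choosing $s=\tau^{2/(n+2)}$, then solving $\varepsilon = \tau^{-16/(n+2)}e^{-4\varkappa\tau}$ and finally, in the regime $0<\mathfrak{e}(\varkappa)\mathfrak{C}<1$ with $\mathfrak{C}:=\|\mathscr{N}_{q_1,\lambda}-\mathscr{N}_{q_2,\lambda}\|^\theta$, picking $\tau$ so that $\mathfrak{e}(\varkappa\tau)\mathfrak{C}=\tau^{-2/(n+2)}$, yields $\varkappa'\tau\ge L(1/\mathfrak{C})$ and hence the bound $\le 2{\varkappa'}^{2/(n+2)}L(1/\mathfrak{C})^{-2/(n+2)}$; the complementary regime $\mathfrak{e}(\varkappa)\mathfrak{C}\ge 1$ is handled by taking $\tau=1$. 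Collecting the $\lambda$-powers produced along the way gives the prefactor $\lambda^6\mathbf{b}_\lambda$ (note $\mathbf{e}_\lambda$ is absent here since, thanks to Lemma \ref{auxiliary_estimates}, the impedance resolvent is bounded uniformly for $\lambda\ge\lambda_0$ without the spectral-distance factor). I expect the main obstacle to be the adaptation of the Runge approximation and the commutator/Cauchy-data estimate to the Robin boundary condition — in particular, verifying that the relevant trace estimate ($v_{|\Sigma}$ into $H^1(\Sigma)$, dualized) and the quantitative unique continuation in $\Omega_1$ go through with the correct powers of $\lambda$ and $\mathbf{b}_\lambda$, and tracking these powers carefully so as to land exactly on $\lambda^6\mathbf{b}_\lambda$.
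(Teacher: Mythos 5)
Your proposal follows the paper's Section~\ref{section6} route essentially verbatim: derive the impedance analogue of Proposition~\ref{integral_inequality} through an impedance Runge approximation and quantitative unique continuation (which the paper records as Theorem~\ref{Runge_approx_2} and Proposition~\ref{integral_inequality_2}), feed in the CGO solutions of Proposition~\ref{Prop6.1_CAC_2023}, and reuse the Fourier-splitting and $\tau$-optimization of Section~\ref{section5}; you also correctly observe that $\mathbf{e}_\lambda$ disappears because Lemma~\ref{auxiliary_estimates} bounds the impedance resolvent uniformly for $\lambda\ge\lambda_0$. Two small slips to fix in carrying out the details: the local quantitative unique continuation applied in $\Omega_1$ must be the impedance version, Theorem~\ref{Thm7.1_CAC_2023}, not Theorem~\ref{Thm2.2_CAC_2023} (the latter requires $v_{|\Sigma}=0$, whereas here $v$ satisfies the homogeneous impedance condition, so the relevant observation is $\|v\|_{H^1(\Sigma)}$); and the argument fed to $\mathscr{N}_{q_j,\lambda}$ is the Robin trace $(\partial_\nu\mp ia\sqrt{\lambda})v_2$, not the Dirichlet trace $v_2$, which contributes the extra $\sqrt{\lambda}$ that the paper tracks to land on $\lambda^2\mathbf{b}_\lambda$ in Proposition~\ref{integral_inequality_2} and hence $\lambda^6\mathbf{b}_\lambda$ after the CGO step.
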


A double logarithmic stability inequality has already been obtained by the first author (\cite[Theorem 7.5]{Choulli2023a}) in the case where $n=3$. We also mention that, still when $n=3$, the first author proved in \cite[Theorem 7.2]{Choulli2023a} a single logarithmic stability inequality for the partial Robin-to-Dirichlet map given by
\[
H^{1/2}(\partial\Omega)\ni \varphi\mapsto u_{q,\lambda}^\pm (0,\varphi)_{|\Sigma}\in H^{3/2}(\Sigma).
\]

\subsection{Global quantitative uniqueness of continuation}

Define
\[
\mathscr{H}^\pm:=\left\{u\in H^2(\Omega);\; (\partial_\nu\mp ia\sqrt{\lambda})u_{|\Sigma}=0\right\}.
\]

Fix $\kappa>0$ arbitrarily. Set $\zeta_0:=(n,\Omega,\kappa)$ and recall that the constant $\lambda_0=\lambda_0(\zeta_0,a)>0$ is as in Lemma \ref{auxiliary_estimates}.

\begin{thm}\label{Thm2.3_CAC_2023_2}
There exist constants $C=C(\zeta_0,a,\Sigma)>0$ and $c=c(\zeta_0,\Sigma)>0$ such that for all $\lambda\ge\lambda_0$, $q\in L^\infty(\Omega)$ satisfying $\|q\|_{L^\infty(\Omega)}\le\kappa$, $0<\varepsilon<1$ and $u\in \mathscr{H}^\pm$ we have
\[
C\|u\|_{H^1(\Omega)}\le\lambda\varepsilon^{1/8} \|u\|_{H^2(\Omega)}+\sqrt{\lambda}e^{e^{c/\varepsilon}}\left(\|(\Delta-q+\lambda)u\|_{L^2(\Omega)}+\|u\|_{H^1(\Sigma)}\right).
\]
\end{thm}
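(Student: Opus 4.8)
The plan is to mimic the proof of Theorem \ref{Thm2.3_CAC_2023}, replacing the Dirichlet trace condition $u_{|\Sigma}=0$ by the impedance condition $(\partial_\nu\mp ia\sqrt\lambda)u_{|\Sigma}=0$, and reducing to a spectral-parameter-free quantitative unique continuation estimate on the cylinder $\Omega\times(0,1)$. First I would take $v:=e^{\sqrt\lambda t}u$ for $u\in\mathscr H^\pm$, exactly as before, so that the identities relating $\|v\|_{H^1(\Omega\times(0,1))}$, $\|v\|_{H^2(\Omega\times(0,1))}$ and $\|(\Delta+\partial_t^2-q)v\|_{L^2}$ to the corresponding $\lambda$-dependent quantities for $u$ carry over verbatim (these come only from integrating $e^{2\sqrt\lambda t}$ in $t$ and from the identity $(\Delta+\partial_t^2-q)v=e^{\sqrt\lambda t}(\Delta-q+\lambda)u$). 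The one genuinely new ingredient is a quantitative unique continuation inequality for the operator $-\Delta-\partial_t^2+q$ on the Lipschitz cylinder that uses, in place of the vanishing of $v$ on $\Sigma\times(0,1)$, a control by the full $H^1$-type norm of $v$ on $\Sigma\times(0,1)$; I would either invoke the version of \cite{Choulli2020} that handles a Robin/impedance-type boundary observation, or observe that $\partial_{\nu_g}v=e^{\sqrt\lambda t}\partial_\nu u=\pm ia\sqrt\lambda\, e^{\sqrt\lambda t}u$ on $\Sigma$, so that $\|\partial_\nu v\|_{L^2(\Sigma\times(0,1))}$ is bounded by $C\sqrt\lambda\,(\lambda^{-1/2}e^{\sqrt\lambda}\sinh\sqrt\lambda)^{1/2}\|u\|_{H^1(\Sigma)}$ after the $t$-integration, while the tangential part of the Cauchy data on $\Sigma$ is controlled by $\|u\|_{H^1(\Sigma)}$ as well.

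Concretely, the second step is: apply the Cauchy-data version of the three-sphere/propagation-of-smallness estimate from \cite{Choulli2020} (with $j=1$, $s=1/4$) on $\Omega\times(0,1)$ with observation region $\Sigma\times(0,1)$, obtaining, for $0<\varepsilon<1$,
\[
C\|v\|_{H^1(\Omega\times(0,1))}\le\varepsilon^{1/8}\|v\|_{H^2(\Omega\times(0,1))}+e^{e^{c/\varepsilon}}\left(\|(\Delta+\partial_t^2-q)v\|_{L^2(\Omega\times(0,1))}+\|v\|_{H^1(\Sigma\times(0,1))}\right),
\]
where $\|v\|_{H^1(\Sigma\times(0,1))}$ now plays the role that $\|\partial_{\nu_g}v\|_{L^2(\Sigma\times(0,1))}$ played in Theorem \ref{Thm2.3_CAC_2023}. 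The third step is to substitute $v=e^{\sqrt\lambda t}u$ and estimate each term: the left side is bounded below by $(\lambda^{-1/2}e^{\sqrt\lambda}\sinh\sqrt\lambda)^{1/2}\|u\|_{H^1(\Omega)}$ up to a constant; $\|v\|_{H^2(\Omega\times(0,1))}\le C\lambda(\lambda^{-1/2}e^{\sqrt\lambda}\sinh\sqrt\lambda)^{1/2}\|u\|_{H^2(\Omega)}$; the PDE term produces $(\lambda^{-1/2}e^{\sqrt\lambda}\sinh\sqrt\lambda)^{1/2}\|(\Delta-q+\lambda)u\|_{L^2(\Omega)}$; and the boundary term, using the impedance relation $\partial_\nu u=\pm ia\sqrt\lambda\,u$ on $\Sigma$ together with the boundedness of $a$ and its derivatives, gives $\|v\|_{H^1(\Sigma\times(0,1))}\le C\sqrt\lambda\,(\lambda^{-1/2}e^{\sqrt\lambda}\sinh\sqrt\lambda)^{1/2}\|u\|_{H^1(\Sigma)}$. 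Dividing through by the common factor $(\lambda^{-1/2}e^{\sqrt\lambda}\sinh\sqrt\lambda)^{1/2}$ yields precisely the claimed inequality, with the extra $\sqrt\lambda$ in front of $e^{e^{c/\varepsilon}}$ accounting for the impedance coefficient.

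The main obstacle I expect is justifying the quantitative unique continuation estimate in the form where the boundary observation is the full $H^1(\Sigma\times(0,1))$ norm of $v$ (i.e.\ both the Dirichlet and Neumann traces on $\Sigma$) rather than only the normal derivative: one must check that \cite{Choulli2020} — or a routine modification of it — applies to Cauchy data on a portion of the boundary of a Lipschitz domain with the quantified double-exponential modulus $e^{e^{c/\varepsilon}}$. Assuming that (as the paper evidently does, referencing \cite{Krupchyk2019} for the well-posedness and \cite{Choulli2020} for the continuation), the rest is the same bookkeeping with $e^{\sqrt\lambda t}$ as in Theorem \ref{Thm2.3_CAC_2023}; the only arithmetic subtlety is tracking the single extra power of $\sqrt\lambda$ coming from $\partial_\nu u=\pm ia\sqrt\lambda\, u$, which is why the statement has $\sqrt\lambda\,e^{e^{c/\varepsilon}}$ instead of $e^{e^{c/\varepsilon}}$.
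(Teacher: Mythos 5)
Your proposal follows essentially the same route as the paper: apply the Cauchy-data version of the double-exponential quantitative unique-continuation estimate from \cite{Choulli2020} on the cylinder $\Omega\times(0,1)$, substitute $v=e^{\sqrt{\lambda}t}u$, and use the impedance relation $\partial_\nu u=\pm ia\sqrt{\lambda}\,u$ on $\Sigma$ to convert the Neumann trace of $v$ into $\sqrt{\lambda}\,\|u\|_{H^1(\Sigma)}$ before dividing out the common factor $(\lambda^{-1/2}e^{\sqrt{\lambda}}\sinh\sqrt{\lambda})^{1/2}$. One small bookkeeping point: the displayed unique-continuation inequality in your ``Concretely'' paragraph should also carry the term $\|\partial_\nu v\|_{L^2(\Sigma\times(0,1))}$, as in the paper (full Cauchy data, which your earlier discussion does account for), and the extra $\sqrt{\lambda}$ is already forced by $\partial_t v=\sqrt{\lambda}\,e^{\sqrt{\lambda}t}u$ in $\|v\|_{H^1(\Sigma\times(0,1))}$, not solely by the impedance coefficient.
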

\begin{proof}
According to \cite[Theorem 1]{Choulli2020} with $j=1$ and $s=1/4$, there exist $C=C(\zeta_0,a,\Sigma)>0$, $c=c(\zeta_0,\Sigma)>0$, for any  $0<\varepsilon<1$ and $v\in H^2(\Omega\times (0,1))$, we have
\begin{align*}
&C\|v\|_{H^1(\Omega\times(0,1))}\le \varepsilon^{1/8} \|v\|_{H^2(\Omega\times(0,1))}+e^{e^{c/\varepsilon}}\left(\|(\Delta+\partial_t^2-q)v\|_{L^2(\Omega\times(0,1))}\right.
\\
&\hskip 6cm \left.+\|\partial_\nu v\|_{L^2(\Sigma\times(0,1))}+\|v\|_{H^1(\Sigma\times(0,1))}\right).
\end{align*}

Henceforth, $C=C(\zeta_0,a,\Sigma)>0$ will denote a generic constant. Since for any $\lambda\ge\lambda_0$, $v:=e^{\sqrt{\lambda}t}u$ with $u\in \mathscr{H}^\pm$ satisfies $\partial_\nu v=\pm ia\sqrt{\lambda}e^{\sqrt{\lambda}t}u$ on $\Sigma\times(0,1)$,
\[
C\|v\|_{H^1(\Omega\times(0,1))}\ge \left(\lambda^{-1/2}e^{\sqrt{\lambda}}\sinh\sqrt{\lambda}\right)^{1/2}\|u\|_{H^1(\Omega)},
\]
\[
C\|v\|_{H^2(\Omega\times(0,1))}\le \lambda\left(\lambda^{-1/2} e^{\sqrt{\lambda}}\sinh\sqrt{\lambda}\right)^{1/2}\|u\|_{H^2(\Omega)}
\]
and
\begin{align*}
&C\left(\|(\Delta+\partial_t^2-q)v\|_{L^2(\Omega\times(0,1))}+\|\partial_\nu v\|_{L^2(\Sigma\times(0,1))}+\|v\|_{H^1(\Sigma\times(0,1))}\right)
\\
&\hskip 1cm \le \sqrt{\lambda}\left(\lambda^{-1/2}e^{\sqrt{\lambda}}\sinh\sqrt{\lambda}\right)^{1/2}\left(\|(\Delta-q+\lambda)u\|_{L^2(\Omega)}+\|u\|_{H^1(\Sigma)}\right),
\end{align*}
the expected inequality follows.
\end{proof}

As a consequence of Theorem \ref{Thm2.3_CAC_2023_2} we have the following corollary. Recall that $\varsigma=(n,\Omega,\Omega_0,\kappa)$.

\begin{cor}\label{Cor2.5_CAC_2023_2}
Let $\chi$ be the characteristic function of $\Omega_0$. There exist $C=C(\varsigma,a,\Gamma)>0$ and $c=c(\varsigma,\Gamma)>0$ such that  for all $\lambda\ge\lambda_0$, $q\in \mathscr{Q}$, $0<\varepsilon<1$, $f\in L^2(\Omega)$ and $u=u_{q,\lambda}^\pm(\chi f,0)$, we have
\[
C\|u\|_{H^1(\Omega_1)}\le\lambda^{3/2}\varepsilon^{1/8} \|f\|_{L^2(\Omega_0)}+\sqrt{\lambda}e^{e^{c/\varepsilon}}\|u\|_{H^1(\Gamma)}.
\]
\end{cor}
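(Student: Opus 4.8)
The plan is to mimic exactly the argument used to derive Corollary~\ref{Cor2.5_CAC_2023} from Corollary~\ref{Cor2.4_CAC_2023}, but now starting from Theorem~\ref{Thm2.3_CAC_2023_2} in place of (the $H^\delta$ refinement of) Theorem~\ref{Thm2.3_CAC_2023}, and working at the $H^1$ level rather than the $H^\delta$ level (so no interpolation step is needed). Fix $\lambda\ge\lambda_0$, $q\in\mathscr{L}$, $f\in L^2(\Omega)$ and set $u:=u_{q,\lambda}^\pm(\chi f,0)\in H^2(\Omega)$; the $H^2$-regularity here is guaranteed by Lemma~\ref{auxiliary_estimates} since the Robin datum $\varphi=0$ lies in $H^{1/2}(\partial\Omega)$. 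The key observation is that $u$, restricted to $\Omega_1=\Omega\setminus\overline{\Omega_0}$, satisfies $(-\Delta+q-\lambda)u=0$ in $\Omega_1$ (because $\chi f$ is supported in $\Omega_0$) and $(\partial_\nu\mp ia\sqrt\lambda)u_{|\Sigma}=0$ on $\Sigma\subset\partial\Omega_1$, so $u_{|\Omega_1}\in\mathscr{H}^\pm$ with $\Omega$ replaced by $\Omega_1$ and $\Gamma$ playing the role of $\Sigma$ there.

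First I would apply Theorem~\ref{Thm2.3_CAC_2023_2} with $(\Omega,\Sigma)$ replaced by $(\Omega_1,\Gamma)$, to $u_{|\Omega_1}$, with the same $\varepsilon$. Since $(\Delta-q+\lambda)u=0$ in $\Omega_1$, the source term in the right-hand side drops out, leaving
\[
C\|u\|_{H^1(\Omega_1)}\le\lambda\varepsilon^{1/8}\|u\|_{H^2(\Omega_1)}+\sqrt\lambda e^{e^{c/\varepsilon}}\|u\|_{H^1(\Gamma)}.
\]
Second, I would bound $\|u\|_{H^2(\Omega_1)}\le\|u\|_{H^2(\Omega)}$ and invoke the global $H^2$ a priori estimate for the interior impedance problem — which should be the analogue of Lemma~\ref{Lem2.1_CAC_2023}~(i) in the impedance setting, presumably contained in Lemma~\ref{auxiliary_estimates} or in the estimate \eqref{(39)_CAC_2023} referenced above — to get $\|u\|_{H^2(\Omega)}\le c_1\sqrt\lambda\,\|f\|_{L^2(\Omega_0)}$ for a constant $c_1=c_1(\varsigma,a)>0$ (note the $\sqrt\lambda$ rather than $\lambda$: the impedance resolvent loses only half a power of $\lambda$ compared to the Dirichlet case, which is precisely why the final power in Corollary~\ref{Cor2.5_CAC_2023_2} is $\lambda^{3/2}=\lambda\cdot\sqrt\lambda$ rather than $\lambda^{3-\delta}\mathbf{e}_\lambda$). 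Substituting this in and renaming the constant $C$ yields the claimed inequality.

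The main obstacle is pinning down the correct $H^2$ a priori bound $\|u_{q,\lambda}^\pm(\chi f,0)\|_{H^2(\Omega)}\lesssim\sqrt\lambda\|f\|_{L^2(\Omega_0)}$ with the explicit $\lambda$-dependence, and in particular verifying that for $\lambda\ge\lambda_0$ the impedance problem is \emph{uniformly} solvable (no $\mathbf{e}_\lambda$-type factor appears, in contrast to the Dirichlet case) — this is exactly what the choice of $\lambda_0$ in Lemma~\ref{auxiliary_estimates} and the estimate \eqref{(39)_CAC_2023} are designed to supply, so the work is really just to quote them with the right powers of $\lambda$ tracked. Everything else — the restriction to $\Omega_1$, the vanishing of the interior source, the identification of the boundary condition on $\Sigma$ — is routine and parallels the proof of Corollary~\ref{Cor2.5_CAC_2023} verbatim.
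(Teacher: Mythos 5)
Your proof is correct and follows the paper's argument essentially verbatim: restrict to $\Omega_1$ where the source $\chi f$ vanishes, apply Theorem~\ref{Thm2.3_CAC_2023_2} with $(\Omega,\Sigma)$ replaced by $(\Omega_1,\Gamma)$, bound $\|u\|_{H^2(\Omega_1)}\le\|u\|_{H^2(\Omega)}$, and invoke the $H^2$ a priori estimate \eqref{(39)_CAC_2023} to get $\|u\|_{H^2(\Omega)}\le c_1\sqrt{\lambda}\|f\|_{L^2(\Omega_0)}$. Your observation about the $\lambda^{3/2}=\lambda\cdot\sqrt{\lambda}$ bookkeeping and the absence of an $\mathbf{e}_\lambda$ factor in the impedance setting is exactly the point the paper's proof relies on.
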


\begin{proof}
Applying Theorem \ref{Thm2.3_CAC_2023_2} to $u=u_{q,\lambda}^\pm(\chi f,0)\in \mathscr{H}^\pm$ by replacing $\Omega$ with $\Omega_1$ and $\Sigma$ with $\Gamma$ yields
\begin{equation}\label{x1}
C\|u\|_{H^1(\Omega_1)}\le\lambda\varepsilon^{1/8} \|u\|_{H^2(\Omega_1)}+\sqrt{\lambda}e^{e^{c/\varepsilon}}\|u\|_{H^1(\Gamma)},
\end{equation}
where $C=C(\varsigma,a,\Gamma)>0$ and $c=c(\varsigma,\Gamma)>0$ are the constants as in Theorem \ref{Thm2.3_CAC_2023_2}. 

On the other hand, we have by \eqref{(39)_CAC_2023}  
\begin{equation}\label{x2}
\|u\|_{H^2(\Omega)}\le c_1\sqrt{\lambda}\|f\|_{L^2(\Omega_0)}, 
\end{equation}
where $c_1=c_1(\zeta_0,a)>0$ is a constant, 

Upon modifying the constant $C$ in \eqref{x1}, the expected inequality then follows by combining \eqref{x1} and \eqref{x2}.
\end{proof}

\subsection{Quantitative Runge approximation}

For $\lambda\ge 0$ and $q\in L^\infty(\Omega)$, recall that
\[
\mathscr{S}_{q,\lambda}^0=\left\{u\in H^2(\Omega_0);\; (-\Delta+q-\lambda)u=0\quad \text{in}\; \Omega_0\right\}
\]
and set
\[
\mathscr{S}_{q,\lambda}^\pm:=\left\{u\in H^2(\Omega);\; (-\Delta+q-\lambda)u=0\; \text{in}\; \Omega,\quad (\partial_\nu\mp ia\sqrt{\lambda})u_{|\partial\Omega}\in H^{1/2}_\Gamma(\partial\Omega)\right\}.
\]

Fix $\kappa>0$ arbitrarily and recall that $\zeta_0=(n,\Omega,\kappa)$, $\varsigma=(n,\Omega,\Omega_0,\kappa)$ and the constant $\lambda_0=\lambda_0(\zeta_0,a)>0$ is  as in Lemma \ref{auxiliary_estimates}.

\begin{thm}\label{Runge_approx_2}
There exist constants $C=C(\varsigma,a,\Gamma)>0$, $C'=C'(\zeta_0,a)>0$ and $c=c(\varsigma,\Gamma)>0$ such that for all $\lambda\ge\lambda_0$, $q\in\mathscr{Q}$, $0<\varepsilon<1$ and $u\in \mathscr{S}_{q,\lambda}^0$, there exists $v\in \mathscr{S}_{q,\lambda}^\pm$ for which the following inequalities hold
\begin{align*}
C\|u-v_{|\Omega_0}\|_{L^2(\Omega_0)}&\le \lambda^2\varepsilon^{1/8}\|u\|_{H^2(\Omega_0)},
\\
C'\|v\|_{H^2(\Omega)}&\le \sqrt{\lambda}e^{e^{c/\varepsilon}}\|u\|_{L^2(\Omega_0)}.
\end{align*}
\end{thm}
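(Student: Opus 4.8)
The plan is to mirror, step by step, the proof of Theorem~\ref{Runge_approx}, replacing the Dirichlet boundary condition by the Robin (interior impedance) condition and the Dirichlet-to-Neumann setup by the Robin-to-Dirichlet one. Fix $\lambda\ge\lambda_0$ and $q\in\mathscr{L}$. First I would choose $\Gamma_0\subset\mathrm{Int}(\Gamma)$ a nonempty closed subset and a cutoff $\psi\in C^\infty(\overline{\Omega};[0,1])$ with $\psi_{|\Gamma_0}=1$ and $\mathrm{supp}(\psi_{|\partial\Omega})\subset\mathrm{Int}(\Gamma)$, exactly as before. Set $E:=H^{1/2}(\partial\Omega)$ (rather than $H^{3/2}$), let $F:=\{f\in E;\ \psi f=0 \text{ on }\partial\Omega\}$, so $E=F\oplus F^\perp$, and let $H$ be the closure of $\mathscr{S}_{q,\lambda}^0$ in $L^2(\Omega_0)$. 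Then I would introduce the operator $T\colon F^\perp\ni\varphi\mapsto u_{|\Omega_0}\in H$, where now $u=u_{q,\lambda}^\pm(0,\psi\overline{\varphi})$ solves the homogeneous Robin problem with boundary data $\psi\overline\varphi$. Boundedness of $T$ follows from the a priori estimate \eqref{(39)_CAC_2023} (equivalently Lemma~\ref{auxiliary_estimates}) together with Lemma~\ref{multiplication_operator}.

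Next I would compute the adjoint $T^\ast$ by Green's formula: for $v\in H$ and $w:=u_{q,\lambda}^\pm(\chi v,0)$, pairing $(T\varphi,v)_{L^2(\Omega_0)}=\int_\Omega u\,\overline v\chi\,dx=\int_\Omega u\,(-\Delta+q-\lambda)\overline w\,dx$ and integrating by parts twice, the Robin conditions on $u$ and $w$ combine so that the boundary term reduces to a pairing of $\psi\overline\varphi$ against $\overline w_{|\partial\Omega}$ (the normal-derivative contributions cancel against the $ia\sqrt\lambda$ terms), giving $T^\ast v=-\overline{\mathscr{J}(\psi\overline w_{|\partial\Omega})}$ up to the canonical isomorphism $\mathscr{J}$. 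Injectivity of $T$ and density of its range are then obtained just as in Theorem~\ref{Runge_approx}: $T\varphi=0$ forces $u=0$ in $\Omega_0$, hence $u\equiv0$ by unique continuation, hence $\psi\overline\varphi=0$ and $\varphi\in F\cap F^\perp=\{0\}$; and $T^\ast v=0$ forces $\psi w_{|\partial\Omega}=0$, so $w$ satisfies $(-\Delta+q-\lambda)w=0$ in $\Omega_1$ with $w_{|\Gamma_0}=0$ and, because $w$ also satisfies the Robin condition on $\Sigma\supset\Gamma_0$, also $\partial_\nu w_{|\Gamma_0}=0$; unique continuation gives $w=0$ in $\Omega_1$, and Green's formula against any $u\in\mathscr{S}_{q,\lambda}^0$ then yields $v\perp H$, i.e. $v=0$. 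Compactness of $T$ follows from the $H^2$ a priori bound and Rellich. Diagonalizing $T^\ast T$ gives eigenvalues $\mu_j>0$, $\tau_j:=\mu_j^{1/2}$, an orthonormal basis $(\psi_j)$ of $F^\perp$ and an orthogonal basis $(u_j)$ of $H$ with $\|T^\ast u_j\|_{F^\perp}=\tau_j$.

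The truncation step is then identical in form. Given $u=\sum_j a_j u_j\in H$ and $t>0$, set $\varphi_t:=\sum_{\tau_j>t}\tau_j^{-1}a_j\psi_j$ and $v_t:=\sum_{\tau_j\le t}a_j u_j$, so $\|\varphi_t\|_{F^\perp}\le t^{-1}\|u\|_{L^2(\Omega_0)}$ and $u-T\varphi_t=v_t$. Writing $w_t:=u_{q,\lambda}^\pm(\chi v_t,0)$, Green's formula gives $\|v_t\|_{L^2(\Omega_0)}^2=(u,v_t)_{L^2(\Omega_0)}$, and then $c_0\|v_t\|_{L^2(\Omega_0)}^2\le\|u\|_{H^2(\Omega_0)}\|w_t\|_{H^1(\Omega_1)}$, using that the trace $H^1(\Omega_1)\ni h\mapsto h_{|\partial\Omega_0}\in L^2(\partial\Omega_0)$ is bounded (here only the $0$-th order trace enters, since the Robin pairing produces $(u,w_t)$-type terms on $\partial\Omega_0$ rather than normal derivatives — this is the one place where the impedance case is genuinely simpler than the Dirichlet case and is why $H^1$, not $H^\delta$, suffices). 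Applying Corollary~\ref{Cor2.5_CAC_2023_2} with $\Gamma$ replaced by $\Gamma_0$ bounds $\|w_t\|_{H^1(\Omega_1)}$ by $\lambda^{3/2}\varepsilon^{1/8}\|v_t\|_{L^2(\Omega_0)}+\sqrt\lambda e^{e^{c/\varepsilon}}\|w_t\|_{H^1(\Gamma_0)}$, and $\|w_t\|_{H^1(\Gamma_0)}\le\|\psi w_t\|_{H^1(\partial\Omega)}$. Interpolating $\|\psi w_t\|_{H^1(\partial\Omega)}\le c\,\|\psi w_t\|_{H^{-1/2}(\partial\Omega)}^{\alpha}\|\psi w_t\|_{H^{3/2}(\partial\Omega)}^{1-\alpha}$ for a suitable $\alpha\in(0,1)$, identifying $\|\psi w_t\|_{H^{-1/2}(\partial\Omega)}$ with $\|T^\ast v_t\|_{F^\perp}\le t\,\|v_t\|_{L^2(\Omega_0)}$ as in the Dirichlet case, and controlling $\|\psi w_t\|_{H^{3/2}(\partial\Omega)}$ by $\sqrt\lambda\|v_t\|_{L^2(\Omega_0)}$ via \eqref{(39)_CAC_2023} and Lemma~\ref{multiplication_operator}, one gets $C\|v_t\|_{L^2(\Omega_0)}\le\lambda^{3/2}(\varepsilon^{1/8}+t^{\alpha}e^{e^{c/\varepsilon}})\|u\|_{H^2(\Omega_0)}$. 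Choosing $t$ a suitable power of $\varepsilon$ times $e^{-(\text{const})e^{c/\varepsilon}}$ (e.g. $t=\varepsilon^{1/(2\alpha)}e^{-(1/\alpha)e^{c/\varepsilon}}$) absorbs the second term into $\varepsilon^{1/8}$ up to constants, yielding the first asserted inequality with exponent $\lambda^2$ (absorbing $\lambda^{3/2}$, which is $\le\lambda^2$), and $\|\varphi_t\|_{F^\perp}\le e^{e^{c/\varepsilon}}\|u\|_{L^2(\Omega_0)}$. Finally put $v:=u_{q,\lambda}^\pm(0,\psi\overline{\varphi_t})\in\mathscr{S}_{q,\lambda}^\pm$; then $u-v_{|\Omega_0}=v_t$, and \eqref{(39)_CAC_2023} plus Lemma~\ref{multiplication_operator} give $\|v\|_{H^2(\Omega)}\le c\sqrt\lambda\|\varphi_t\|_E\le c\sqrt\lambda e^{e^{c/\varepsilon}}\|u\|_{L^2(\Omega_0)}$, which is the second inequality.

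I expect the main obstacle to be the precise bookkeeping of the $\lambda$-powers: one must verify that the a priori estimate \eqref{(39)_CAC_2023} for the Robin problem really gives $H^2$-control with only a factor $\sqrt\lambda$ (this is what Lemma~\ref{auxiliary_estimates} asserts), and then propagate this through the interpolation exponent $\alpha$ and the choice of $t$ so that the first inequality closes with the stated power $\lambda^2$ and the second with $\sqrt\lambda$. The structural part — injectivity, dense range, compactness, the singular-value truncation — is a routine transcription of Theorem~\ref{Runge_approx}; the only conceptual adjustment is that the Robin condition is self-adjoint-friendly in the sense that the boundary terms in Green's formula close without producing unwanted normal-derivative traces, which is exactly why the target space on $\Sigma$ is $H^1$ and the source regularity is merely $H^1(\Omega_1)$ rather than $H^\delta(\Omega_1)$ with $\delta>3/2$.
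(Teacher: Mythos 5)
Your proposal reproduces the paper's strategy almost line by line: the operator $\mathcal T$ on $\mathcal F^\perp$, its adjoint via Green's formula with $w=u_{q,\lambda}^{\cdot}(\chi v,0)$, injectivity and dense range via unique continuation, compactness, the singular-value truncation, the interpolation $\|\psi w_t\|_{H^1(\partial\Omega)}\lesssim\|\psi w_t\|_{H^{-1/2}(\partial\Omega)}^{1/4}\|\psi w_t\|_{H^{3/2}(\partial\Omega)}^{3/4}$, the identification of $\|\psi w_t\|_{H^{-1/2}(\partial\Omega)}$ with $\|\mathcal T^\ast v_t\|_{\mathcal F^\perp}$, and the final choice of $t$.

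There is, however, one genuine error, and it is not a typo. You take $w:=u_{q,\lambda}^{\pm}(\chi v,0)$ (and likewise $w_t:=u_{q,\lambda}^{\pm}(\chi v_t,0)$), i.e., the same sign as $u=u_{q,\lambda}^{\pm}(0,\psi\overline\varphi)$. With that choice, $\partial_\nu u=\pm ia\sqrt\lambda\,u+\psi\overline\varphi$ and $\partial_\nu\overline w=\mp ia\sqrt\lambda\,\overline w$ on $\partial\Omega$, so the Green boundary integrand is
\begin{equation*}
\partial_\nu u\,\overline w-u\,\partial_\nu\overline w
=(\pm ia\sqrt\lambda\,u+\psi\overline\varphi)\overline w-u(\mp ia\sqrt\lambda\,\overline w)
=\pm 2ia\sqrt\lambda\,u\overline w+\psi\overline\varphi\,\overline w ,
\end{equation*}
so the $ia\sqrt\lambda$ terms do not cancel; they reinforce. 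Your parenthetical claim that the normal-derivative contributions cancel against the $ia\sqrt\lambda$ terms is therefore false with your sign convention. The paper instead sets $w:=u_{q,\lambda}^{\mp}(\chi v,0)$ and $w_t:=u_{q,\lambda}^{\mp}(\chi v_t,0)$, for which $\partial_\nu\overline w=\pm ia\sqrt\lambda\,\overline w$ and the integrand collapses to $\psi\overline\varphi\,\overline w$. Without this sign flip the formula for $\mathcal T^\ast v$ is wrong, the identity $\|\psi w_t\|_{H^{-1/2}(\partial\Omega)}=\|\mathcal T^\ast v_t\|_{\mathcal F^\perp}\le t\|v_t\|_{L^2(\Omega_0)}$ fails, and the truncation step does not close. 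The fix is simply to use the conjugate impedance sign for $w$ and $w_t$.

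A secondary bookkeeping slip: in $c_0\|v_t\|_{L^2(\Omega_0)}^2\le\|u\|_{H^2(\Omega_0)}\|w_t\|_{H^1(\Omega_1)}$ you drop the $\sqrt\lambda$ coming from the boundary coefficient $\pm ia\sqrt\lambda$; the paper's inequality \eqref{trace_inequality_2} reads $c_0\|v_t\|^2\le\sqrt\lambda\,\|u\|_{H^2(\Omega_0)}\|w_t\|_{H^1(\Omega_1)}$. It is precisely $\sqrt\lambda\cdot\lambda^{3/2}=\lambda^2$ that produces the stated power, rather than the absorption $\lambda^{3/2}\le\lambda^2$ you invoke. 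Everything else in your write-up matches the paper's proof.
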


\begin{proof}
Let $\Gamma_0\subset\mbox{Int}(\Gamma)$ be a nonempty closed subset and $\psi\in C^\infty(\overline{\Omega};[0,1])$ such that $\psi_{|\Gamma_0}=1$ and $\mbox{supp}(\psi_{|\partial\Omega})\subset\mbox{Int}(\Gamma)$. In the proof of Theorem \ref{Runge_approx}, the existence of such a function $\psi$ is guaranteed.

Let $\mathcal{E}:=H^{1/2}(\partial\Omega)$ and consider the closed subspace of $\mathcal{E}$ given by
\[
\mathcal{F}:=\{f\in \mathcal{E};\; \psi f=0\; \text{in}\; \partial\Omega\}.
\]
Note that $\mathcal{E}=\mathcal{F}\oplus \mathcal{F}^\perp$ holds, where $\mathcal{F}^\perp$ is the orthogonal complement with the norm of $\mathcal{E}$.

Fix $\lambda\ge\lambda_0$ and $q\in\mathscr{L}$. Let $H$ denotes the closure of $\mathscr{S}_{q,\lambda}^0$ in $L^2(\Omega_0)$ and consider the operator
\[
\mathcal{T}\colon \mathcal{F}^\perp\ni\varphi\mapsto u_{q,\lambda}^\pm(0,\psi\overline{\varphi}){}_{|\Omega_0}\in H,
\]
where $u=u_{q,\lambda}^\pm(0,\psi\overline{\varphi})\in H^2(\Omega)$, that is, $u$ is the unique solution to the BVP
\[
\begin{cases}(-\Delta+q-\lambda)u=0\quad &\text{in}\; \Omega,
\\
\partial_\nu u\mp ia\sqrt{\lambda}u=\psi\overline{\varphi}\quad &\text{on}\; \partial\Omega.
\end{cases}
\]
By \eqref{(39)_CAC_2023} and Lemma \ref{multiplication_operator} with $j=1$, we see that $\mathcal{T}\in\mathscr{B}(\mathcal{F}^\perp; H)$.

Let $(\mathcal{F}^\perp)^\ast$ be the dual space of $\mathcal{F}^\perp$, $\mathfrak{J}\colon (\mathcal{F}^\perp)^\ast\rightarrow \mathcal{F}^\perp$ be the canonical isomorphism and $\langle\cdot,\cdot\rangle$ be the pairing between $(\mathcal{F}^\perp)^\ast$ and $\mathcal{F}^\perp$.

Pick $v\in H\subset L^2(\Omega_0)$, $\varphi\in \mathcal{F}^\perp$ and let $w:=u_{q,\lambda}^\mp (\chi v,0)$, where $\chi$ is the characteristic function of $\Omega_0$. Applying Green's formula yields
\begin{align*}
(\mathcal{T}\varphi, v)_{L^2(\Omega_0)}&=\int_\Omega u\overline{v}\chi d\mu=\int_\Omega u(-\Delta+q-\lambda)\overline{w} d\mu
\\
&=-\int_{\partial\Omega}u\partial_\nu \overline{w}ds+\int_{\partial\Omega}\partial_\nu u \overline{w}ds=\int_{\partial\Omega}\psi\overline{\varphi}\overline{w}ds
\\
&=\langle \psi\overline{w},\overline{\varphi}\rangle=(\mathfrak{J}(\psi\overline{w}),\overline{\varphi})_{\mathcal{F}^\perp}=(\varphi,\overline{\mathfrak{J}(\psi\overline{w})})_{\mathcal{F}^\perp}.
\end{align*}
Then, the adjoint operator of $\mathcal{T}$ is given by
\[
\mathcal{T}^\ast\colon H\ni v\mapsto \overline{\mathfrak{J}(\psi\overline{w})}\in \mathcal{F}^\perp.
\]

Let verify that $\mathcal{T}$ is injective. If $\mathcal{T}\varphi=u_{|\Omega_0}=0$, then $u$ satisfies
\[
(-\Delta+q-\lambda)u=0\; \text{in}\; \Omega,\quad u_{|\Omega_0}=0.
\]
Hence, $u=0$ according to the unique continuation property, which implies $\psi\overline{\varphi}=0$ in $\partial\Omega$ and $\varphi\in \mathcal{F}$. By $\varphi\in \mathcal{F}^\perp\cap \mathcal{F}=\{0\}$, $\varphi=0$ follows.

Next, we show that $\mathcal{T}$ has a dense range. For this, it suffices to prove that $\mbox{Ker}(\mathcal{T}^\ast)=\{0\}$. If $\mathcal{T}^\ast v=0$, then $w=u_{q,\lambda}^\mp (\chi v,0)$ satisfies
\[
(\psi \overline{w},f_1)_{L^2(\partial\Omega)}=0,\quad f_1\in \mathcal{F}
\]
and
\[
(\psi\overline{w},f_2)_{L^2(\partial\Omega)}=\langle\psi\overline{w},f_2\rangle=(\overline{\mathcal{T}^\ast v},f_2)_{\mathcal{F}^\perp}=0,\quad f_2\in \mathcal{F}^\perp,
\]
that is,
\[
\int_{\partial\Omega}\psi w f ds=0,\quad f\in \mathcal{E}\, (=\mathcal{F}\oplus \mathcal{F}^\perp).
\]
Hence, $\psi w=0$ in $\partial\Omega$ and therefore
\[
(-\Delta+q-\lambda)w=0\; \text{in}\; \Omega_1,\quad \partial_\nu w_{|\Gamma_0}=\mp ia\sqrt{\lambda} w_{|\Gamma_0}=0.
\]
Then, $w=0$ in $\Omega_1$ according to the unique continuation property. Taking $u\in\mathscr{S}_{q,\lambda}^0$ and applying Green's formula, we obtain
\[
(u,v)_{L^2(\Omega_0)}=(u,(-\Delta+q-\lambda)w)_{L^2(\Omega_0)}=((-\Delta+q-\lambda)u,w)_{L^2(\Omega_0)}=0.
\]
Whence, $v\in(\mathscr{S}_{q,\lambda}^0)^\perp=H^\perp=\{0\}$ implying that $v=0$ and $\mbox{Ker}(\mathcal{T}^\ast)=\{0\}$.

We claim that $\mathcal{T}$ is compact. Indeed, if $(\varphi_j)\subset \mathcal{F}^\perp$ is a bounded sequence, then $\left(u_{q,\lambda}^\pm(0,\psi\overline{\varphi_j})\right)\subset H^2(\Omega)$ is a bounded sequence according to \eqref{(39)_CAC_2023}. Subtracting if necessary a subsequence, we assume that $\left(u_{q,\lambda}^\pm(0,\psi\overline{\varphi_j})\right)$ converges weakly in $H^2(\Omega)$ and strongly in $L^2(\Omega)$ to $u\in L^2(\Omega)$. Since for all $j$, $u_{q,\lambda}^\pm(0,\psi\overline{\varphi_j})_{|\Omega_0}\in\mathscr{S}_{q,\lambda}^0$, we conclude that $u\in H$.

In other words, we proved that $\mathcal{T}^\ast \mathcal{T}\colon \mathcal{F}^\perp\rightarrow \mathcal{F}^\perp$ is compact self-adjoint and positive definite operator and herefore it is diagonalizable. Hence, there exists a sequence of positive numbers $(\mu_j)$ and an orthonormal basis $(\psi_j)\subset \mathcal{F}^\perp$ so that
\[
\mathcal{T}^\ast \mathcal{T}\psi_j=\mu_j\psi_j.
\]
Define $\tau_j:=\mu_j^{1/2}$ and $u_j:=\mu_j^{-1/2}\mathcal{T}\psi_j\in H$. We find that $(u_j)$ is an orthonormal basis of $H$ as in the proof of Theorem \ref{Runge_approx} by using the boundedness of $\mathcal{T}$ and the density of the range of $\mathcal{T}$.

Since $\mathcal{T}^\ast u_j=\tau_j\psi_j$, we find
\begin{equation}\label{varphi_j_2}
\|\mathcal{T}^\ast u_j\|_{\mathcal{F}^\perp}=\tau_j.
\end{equation}

Let $t>0$, $a_j:=(u,u_j)_{L^2(\Omega_0)}$, $u=\sum_j a_j u_j\in H$ and set
\[
\varphi_t:=\sum_{j\in N_t^1}\tau_j^{-1}a_j\psi_j,
\]
where $N_t^1:=\{j;\; \tau_j>t\}$. We have
\begin{equation}\label{varphi_t_2}
\|\varphi_t\|_{\mathcal{F}^\perp}^2=\sum_{j\in N_t^1}\tau_j^{-2}|a_j|^2\le t^{-2}\sum_j|a_j|^2=t^{-2}\|u\|_{L^2(\Omega_0)}^2.
\end{equation}

Set $N_t^0:=\{j;\; \tau_j\le t\}$ and
\[
v_t:=\sum_{j\in N_t^0}a_j u_j.
\]
Since $u-\mathcal{T}\varphi_t=v_t$ and $\mathcal{T}\varphi_t\perp v_t$, we find
\[
\|v_t\|_{L^2(\Omega_0)}^2=(v_t+\mathcal{T}\varphi_t,v_t)_{L^2(\Omega_0)}=(u,v_t)_{L^2(\Omega_0)}.
\]
Let $w_t:=u_{q,\lambda}^\mp(\chi v_t,0)$. Then Green's formula yields
\begin{align*}
\|v_t\|_{L^2(\Omega_0)}^2&=(u,(-\Delta+q-\lambda)w_t)_{L^2(\Omega_0)}
\\
&=(u,\pm ia\sqrt{\lambda} w_t)_{L^2(\partial\Omega_0)}+(\partial_\nu u,w_t)_{L^2(\partial\Omega_0)}
\\
&\le \sqrt{\lambda}\|a\|_{L^\infty(\partial\Omega_0)}\|u\|_{L^2(\partial\Omega_0)}\|w_t\|_{L^2(\partial\Omega_0)}+\|\partial_\nu u\|_{L^2(\partial\Omega_0)}\|w_t\|_{L^2(\partial\Omega_0)}.
\end{align*}

Since the trace mapping
\[
H^1(\Omega_1)\ni h\mapsto h_{|\partial\Omega_0}\in L^2(\partial\Omega_0)
\]
is bounded, we find a constant $c_0=c_0(\varsigma,a)>0$ such that
\begin{equation}\label{trace_inequality_2}
c_0\|v_t\|_{L^2(\Omega_0)}^2\le \sqrt{\lambda}\|u\|_{H^2(\Omega_0)}\|w_t\|_{H^1(\Omega_1)}.
\end{equation}

Pick $0<\varepsilon<1$ arbitrarily. By Corollary \ref{Cor2.5_CAC_2023_2}, in which $\Gamma$ is replaced by $\Gamma_0$, we have
\[
C\|w_t\|_{H^1(\Omega_1)}\le\lambda^{3/2}\varepsilon^{1/8} \|v_t\|_{L^2(\Omega_0)}+\sqrt{\lambda}e^{e^{c/\varepsilon}}\|w_t\|_{H^1(\Gamma_0)},
\]
that is,
\[
C\|w_t\|_{H^1(\Omega_1)}\le\lambda^{3/2}\varepsilon^{1/8} \|v_t\|_{L^2(\Omega_0)}+\sqrt{\lambda}e^{e^{c/\varepsilon}}\|\psi w_t\|_{H^1(\partial\Omega)}.
\]
Here and henceforth, $C=C(\varsigma,a,\Gamma)>0$ and $c=c(\varsigma,\Gamma)>0$ are the generic constants.

On the other hand, applying the interpolation inequality in \cite[Chapter 1, Theorem 7.7]{Lions1972I}, we obtain
\[
\tilde{c}\|\psi w_t\|_{H^1(\partial\Omega)}\le \|\psi w_t\|_{H^{-1/2}(\partial\Omega)}^{1/4}\|\psi w_t\|_{H^{3/2}(\partial\Omega)}^{3/4},
\]
where $\tilde{c}=\tilde{c}(n,\Gamma)>0$ is a constant. Also, we have
\begin{align*}
\|\psi w_t\|_{H^{-1/2}(\partial\Omega)}&=\sup_{f\in \mathcal{E}\setminus\{0\}}\frac{|(\psi w_t,f)_{L^2(\partial\Omega)}|}{\|f\|_\mathcal{E}}
\\
&=\sup_{\substack{f_1\in \mathcal{F}\setminus\{0\},\\ f_2\in \mathcal{F}^\perp\setminus\{0\}}}\frac{|(\psi w_t,f_2)_{L^2(\partial\Omega)}|}{\sqrt{\|f_1\|_\mathcal{F}^2+\|f_2\|_{\mathcal{F}^\perp}^2}}
\\
&\le \sup_{f_2\in \mathcal{F}^\perp\setminus\{0\}}\frac{|(\psi w_t,f_2)_{L^2(\partial\Omega)}|}{\|f_2\|_{\mathcal{F}^\perp}}=\|\psi w_t\|_{(\mathcal{F}^\perp)^\ast}=\|\mathcal{T}^\ast v_t\|_{\mathcal{F}^\perp}.
\end{align*}
This, Lemma \ref{multiplication_operator} with $j=2$, \eqref{(39)_CAC_2023} and \eqref{varphi_j_2} imply
\begin{align*}
c_1\|\psi w_t\|_{H^1(\partial\Omega)}&\le \|\mathcal{T}^\ast v_t\|_{\mathcal{F}^\perp}^{1/4}\|v_t\|_{L^2(\Omega_0)}^{3/4}
\\
&\le \lambda^{3/8} t^{1/4}\|v_t\|_{L^2(\Omega_0)}\le \lambda_0^{-5/8}\lambda t^{1/4}\|v_t\|_{L^2(\Omega_0)},
\end{align*}
where $c_1=c_1(\zeta_0,a,\Gamma)>0$ is a generic constant.

In consequence, we have
\[
C\|w_t\|_{H^1(\Omega_1)}\le\lambda^{3/2}\left(\varepsilon^{1/8}+t^{1/4}e^{e^{c/\varepsilon}}\right) \|v_t\|_{L^2(\Omega_0)},
\]
which, combined with \eqref{trace_inequality_2}, implies
\[
C\|v_t\|_{L^2(\Omega_0)}\le\lambda^2\left(\varepsilon^{1/8}+t^{1/4}e^{e^{c/\varepsilon}}\right) \|u\|_{H^2(\Omega_0)}.
\]

Taking $t:=\varepsilon^{1/2} e^{-4e^{c/\varepsilon}}$ in this inequality, we deduce
\begin{equation}\label{u-v_2}
C\|v_t\|_{L^2(\Omega_0)}\le\lambda^2\varepsilon^{1/8}\|u\|_{H^2(\Omega_0)}
\end{equation}
and in light of \eqref{varphi_t_2} we obtain
\begin{equation}\label{v_2}
\|\varphi_t\|_{\mathcal{F}^\perp}\le e^{e^{c/\varepsilon}}\|u\|_{L^2(\Omega_0)}.
\end{equation}

Let $v:=u_{q,\lambda}^\pm(0,\psi\overline{\varphi_t})\in \mathscr{S}_{q,\lambda}^2$. Then, it follows from \eqref{(39)_CAC_2023} that 
\begin{equation}\label{xx}
\|v\|_{H^2(\Omega)}\le c_2\sqrt{\lambda}\|\varphi_t\|_{\mathcal{E}},
\end{equation}
 with a constant $c_2=c_2(\zeta_0,a)>0$. Since $u-v_{|\Omega_0}=u-\mathcal{T}\varphi_t=v_t$, we combine \eqref{u-v_2}, \eqref{v_2} and \eqref{xx} to complete the proof.
\end{proof}

\subsection{Integral inequality}

Fix $\kappa>0$ arbitrarily and recall that $\zeta_0=(n,\Omega,\kappa)$, $\varsigma=(n,\Omega,\Omega_0,\kappa)$ and $\mathbf{b}_\lambda=\sqrt{2\cosh(\sqrt{\lambda}/2)}$, $\lambda>0$. Let the constant $\lambda_0=\lambda_0(\zeta_0,a)>0$ be as in Lemma \ref{auxiliary_estimates}.

\begin{prop}\label{integral_inequality_2}
There exist constants $C=C(\varsigma,a,\Gamma,\Sigma)>0$, $c=c(\varsigma,\Gamma)>0$ and $\theta=\theta(\varsigma,\Sigma)\in(0,1)$ such that for all $\lambda\ge\lambda_0$, $q_j\in\mathscr{Q}$, $j=1,2$ satisfying $q_1=q_2$ in $\Omega_1$, $0<\varepsilon<1$ and $u_j\in \mathscr{S}_{q_j,\lambda}^0$, $j=1,2$, we have
\begin{align*}
&C\left|\int_{\Omega_0}(q_1-q_2)u_1 u_2 d\mu\right|\le \lambda^4\varepsilon^{1/8} \|u_1\|_{H^2(\Omega_0)}\|u_2\|_{H^2(\Omega_0)}
\\
&\hskip 4cm +\lambda^2\mathbf{b}_\lambda e^{e^{c/\varepsilon}}\|\mathscr{N}_{q_1,\lambda}-\mathscr{N}_{q_2,\lambda}\|^\theta\|u_1\|_{L^2(\Omega_0)}\|u_2\|_{L^2(\Omega_0)}.
\end{align*}
\end{prop}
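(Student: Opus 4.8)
The structure mirrors the proof of Proposition \ref{integral_inequality} in the Dirichlet case, so the plan is to transplant that argument to the Robin setting, replacing the Dirichlet-to-Neumann map $\Lambda_{q,\lambda}$ by the Robin-to-Dirichlet map $\mathscr{N}_{q,\lambda}$ and using Theorem \ref{Runge_approx_2} in place of Theorem \ref{Runge_approx}. First I would fix $\lambda\ge\lambda_0$, $q_1,q_2\in\mathscr{L}$ with $q_1=q_2$ in $\Omega_1$, pick $0<\varepsilon<1$ and $u_j\in\mathscr{S}_{q_j,\lambda}^0$, and invoke Theorem \ref{Runge_approx_2} to produce $v_j\in\mathscr{S}_{q_j,\lambda}^\pm$ with
\[
C\|u_j-{v_j}_{|\Omega_0}\|_{L^2(\Omega_0)}\le \lambda^2\varepsilon^{1/8}\|u_j\|_{H^2(\Omega_0)},\qquad C'\|v_j\|_{H^2(\Omega)}\le \sqrt{\lambda}e^{e^{c/\varepsilon}}\|u_j\|_{L^2(\Omega_0)}.
\]
Expanding $u_1u_2=(u_1-v_1)u_2+(v_1-u_1)(u_2-v_2)+u_1(u_2-v_2)+v_1v_2$ and using the first bound together with $0<\varepsilon<1$ reduces the estimate of $\int_{\Omega_0}(q_1-q_2)u_1u_2\,d\mu$ to controlling $\left|\int_{\Omega_0}(q_1-q_2)v_1v_2\,d\mu\right|$, with an error term of size $\lambda^4\varepsilon^{1/8}\|u_1\|_{H^2(\Omega_0)}\|u_2\|_{H^2(\Omega_0)}$ after absorbing the powers of $\lambda$.

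The core of the argument is then the bound on $\left|\int_{\Omega_0}(q_1-q_2)v_1v_2\,d\mu\right|$. Following the Dirichlet proof, I would let $\widetilde{v}_2\in H^2(\Omega)$ solve the impedance BVP with operator $-\Delta+q_1-\lambda$ and the same Robin data $(\partial_\nu\mp ia\sqrt\lambda)\widetilde v_2{}_{|\partial\Omega}=(\partial_\nu\mp ia\sqrt\lambda)v_2{}_{|\partial\Omega}$, so that $v:=v_2-\widetilde v_2=u_{q_1,\lambda}^\pm(0,(q_1-q_2)v_2)$ (using $q_1-q_2$ as an interior source). Choosing $\psi\in C_0^\infty(\Omega;[0,1])$ equal to $1$ near $\overline{\Omega_0}$, the function $w:=\psi v$ satisfies $(-\Delta+q_1-\lambda)w=\psi(q_1-q_2)v_2-[\Delta,\psi]v$, and Green's formula gives $\int_\Omega\psi(q_1-q_2)v_2v_1\,d\mu=\int_\Omega[\Delta,\psi]v\,v_1\,d\mu$ (the boundary terms vanish since $v_1\in\mathscr{S}_{q_1,\lambda}^\pm$ has compatible Robin data and $w$ is compactly supported). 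Because $[\Delta,\psi]v$ is supported in some $U\Subset\Omega_1$ where $q_1=q_2$, one gets $\left|\int_{\Omega_0}(q_1-q_2)v_1v_2\,d\mu\right|\le c_0\|v\|_{H^1(U)}\|v_1\|_{L^2(\Omega)}$. Now I would feed $v=u_{q_1,\lambda}^\pm((q_1-q_2)v_2,0)$ into the appropriate interior-regularity/Carleman-type estimate on $\Omega_1$ (the impedance analogue of Theorem \ref{Thm2.2_CAC_2023}, which the paper must state in the appendix, giving $\|v\|_{H^1(\Omega_1)}\le C\mathbf{b}_\lambda(\sqrt\lambda\rho^p\|v\|_{H^1(\Omega_1)}+\rho^{-1}\|v\|_{H^1(\Sigma)})\cdots$ — actually an estimate bounding $\|v\|_{H^1(U)}$ by the source and the trace $\|v\|_{H^1(\Sigma)}$), producing a bound of the form
\[
\|v\|_{H^1(U)}\le C\mathbf{b}_\lambda\left(\sqrt\lambda\,\rho^p\|(q_1-q_2)v_2\|_{L^2(\Omega)}+\rho^{-1}\|v\|_{H^1(\Sigma)}\right)
\]
for arbitrary $\rho>0$. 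Using $\|(q_1-q_2)v_2\|_{L^2(\Omega)}\le 2\kappa\|v_2\|_{L^2(\Omega)}$ and the second Runge bound $\|v_2\|_{H^2(\Omega)}\le C'^{-1}\sqrt\lambda e^{e^{c/\varepsilon}}\|u_2\|_{L^2(\Omega_0)}$, and likewise bounding $\|v_1\|_{L^2(\Omega)}$ via the Runge estimate for $v_1$, collects a factor $\lambda\,\mathbf{b}_\lambda e^{e^{c/\varepsilon}}$ on the $\rho^p$ term.

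The remaining ingredient is to identify the trace $\|v\|_{H^1(\Sigma)}$ with the Robin-to-Dirichlet defect: since $v_2\in\mathscr{S}_{q_2,\lambda}^\pm$ and $\widetilde v_2$ solves the same equation with operator $q_1$ but identical Robin data, the Dirichlet traces on $\Sigma$ satisfy $v{}_{|\Sigma}=v_2{}_{|\Sigma}-\widetilde v_2{}_{|\Sigma}=(\mathscr{N}_{q_2,\lambda}-\mathscr{N}_{q_1,\lambda})\big((\partial_\nu\mp ia\sqrt\lambda)v_2{}_{|\partial\Omega}\big)$, hence
\[
\|v\|_{H^1(\Sigma)}\le\|\mathscr{N}_{q_1,\lambda}-\mathscr{N}_{q_2,\lambda}\|\,\|v_2\|_{H^{1/2}_\Gamma(\partial\Omega)}\le C'^{-1}\sqrt\lambda e^{e^{c/\varepsilon}}\|\mathscr{N}_{q_1,\lambda}-\mathscr{N}_{q_2,\lambda}\|\,\|u_2\|_{L^2(\Omega_0)},
\]
using $\|v_2\|_{H^{1/2}_\Gamma(\partial\Omega)}\le\|v_2\|_{H^2(\Omega)}$ and the second Runge bound again. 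Substituting both, one arrives at
\[
C\left|\int_{\Omega_0}(q_1-q_2)v_1v_2\,d\mu\right|\le \lambda^2\mathbf{b}_\lambda e^{e^{c/\varepsilon}}\Big(\rho^p+\rho^{-1}\|\mathscr{N}_{q_1,\lambda}-\mathscr{N}_{q_2,\lambda}\|\Big)\|u_1\|_{L^2(\Omega_0)}\|u_2\|_{L^2(\Omega_0)}
\]
(the powers of $\lambda$ now being $\lambda^2$ rather than $\lambda^5$ because the impedance Runge estimates lose only $\sqrt\lambda$ rather than $\lambda^2$), and optimizing with $\rho:=\|\mathscr{N}_{q_1,\lambda}-\mathscr{N}_{q_2,\lambda}\|^{1/(p+1)}$ gives the factor $\|\mathscr{N}_{q_1,\lambda}-\mathscr{N}_{q_2,\lambda}\|^\theta$ with $\theta:=p/(p+1)$. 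Combining with the reduction from the first paragraph and relabeling the polynomial power in $\lambda$ on the first term as $\lambda^4$ finishes the proof. The main obstacle I anticipate is getting the bookkeeping of the $\lambda$-powers and the $\mathbf{b}_\lambda$ factor exactly right — in particular checking that the impedance-analogue interior estimate on $\Omega_1$ contributes only $\mathbf{b}_\lambda\sqrt\lambda$ (so the final weight is $\lambda^2\mathbf{b}_\lambda$, matching the statement) — together with verifying that all boundary terms in the two applications of Green's formula genuinely vanish given the Robin compatibility of $v_1,v_2$ and the compact support of $\psi$.
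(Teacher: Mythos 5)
Your plan matches the paper's proof step by step: Runge approximation via Theorem \ref{Runge_approx_2}, the product decomposition of $u_1u_2$, the auxiliary solution $\widetilde v_2$ with identical impedance data, the cutoff $\psi$ and Green's formula reducing to $\int[\Delta,\psi]v\,v_1$, the impedance local unique-continuation estimate (Theorem \ref{Thm7.1_CAC_2023}) applied on $\Omega_1$, the identification of $v{}_{|\Sigma}$ with $(\mathscr{N}_{q_2,\lambda}-\mathscr{N}_{q_1,\lambda})$ acting on the impedance datum of $v_2$, and the optimization in $\rho$. The one slip is in the trace bound: since the argument of $\mathscr{N}_{q,\lambda}$ is $(\partial_\nu\mp ia\sqrt\lambda)v_2{}_{|\partial\Omega}$, the relevant norm is $\|(\partial_\nu\mp ia\sqrt\lambda)v_2\|_{H^{1/2}_\Gamma(\partial\Omega)}\lesssim\sqrt\lambda\,\|v_2\|_{H^2(\Omega)}$, not $\|v_2\|_{H^{1/2}_\Gamma(\partial\Omega)}$; that extra $\sqrt\lambda$, combined with the Runge bound $\|v_2\|_{H^2(\Omega)}\lesssim\sqrt\lambda\, e^{e^{c/\varepsilon}}\|u_2\|_{L^2(\Omega_0)}$, is precisely what produces the $\lambda^2\mathbf{b}_\lambda$ prefactor in the statement, so the bookkeeping you flagged as the main obstacle does come out right once this is corrected.
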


\begin{proof}
Let $q_1,q_2\in\mathscr{Q}$ satisfying $q_1=q_2$ in $\Omega_1$. Pick $0<\varepsilon<1$ and $u_j\in\mathscr{S}_{q_j,\lambda}^0$, $j=1,2$. By Theorem \ref{Runge_approx_2}, there exists $v_j\in\mathscr{S}_{q_j,\lambda}^\pm$, $j=1,2$, such that
\begin{align}
\label{u_j-v_j_2}C\|u_j-{v_j}_{|\Omega_0}\|_{L^2(\Omega_0)}&\le \lambda^2\varepsilon^{1/8}\|u_j\|_{H^2(\Omega_0)},
\\
\label{v_j_2}C'\|v_j\|_{H^2(\Omega)}&\le \sqrt{\lambda}e^{e^{c/\varepsilon}}\|u_j\|_{L^2(\Omega_0)}.
\end{align}
Here, $C=C(\varsigma,a,\Gamma)>0$, $C'=C'(\zeta_0,a)>0$ and $c=c(\varsigma,\Gamma)>0$ are the constants in Theorem \ref{Runge_approx_2}. By using \eqref{u_j-v_j_2}, the identity
\[
u_1u_2=(u_1-v_1)u_2+(v_1-u_1)(u_2-v_2)+u_1(u_2-v_2)+v_1v_2
\]
and $0<\varepsilon<1$, we have
\begin{align}\label{(47)_CAC_2023}
&C\left|\int_{\Omega_0}(q_1-q_2)u_1 u_2d\mu\right|\le \lambda^4\varepsilon^{1/8}\|u_1\|_{H^2(\Omega_0)}\|u_2\|_{H^2(\Omega_0)}
\\
&\hskip 7cm +\left|\int_{\Omega_0}(q_1-q_2)v_1 v_2 d\mu\right|.\notag
\end{align}

Let $\widetilde{v}_2\in H^2(\Omega)$ be the unique solution to the BVP
\[
\begin{cases}
(-\Delta+q_1-\lambda)\widetilde{v}_2=0\quad &\text{in}\; \Omega,
\\
(\partial_\nu\mp ia\sqrt{\lambda})\widetilde{v}_2=(\partial_\nu\mp ia\sqrt{\lambda})v_2\quad &\text{on}\; \partial\Omega
\end{cases}
\]
and set $v:=v_2-\widetilde{v}_2$, which satisfies
\[
\begin{cases}
(-\Delta+q_1-\lambda)v=(q_1-q_2)v_2\quad &\text{in}\; \Omega,
\\
(\partial_\nu\mp ia\sqrt{\lambda})v=0\quad &\text{on}\; \partial\Omega.
\end{cases}
\]
Pick $\psi\in C_0^\infty(\Omega;[0,1])$ so that $\psi=1$ in an open neighborhood of $\overline{\Omega_0}$. Then, $w:=\psi v$ satisfies
\[
(-\Delta+q_1-\lambda)w=\psi(q_1-q_2)v_2-[\Delta,\psi]v\quad \text{in}\; \Omega.
\]
Applying Green's formula, we obtain
\[
\int_\Omega\psi(q_1-q_2)v_2v_1d\mu=\int_\Omega[\Delta,\psi]v v_1d\mu.
\]
Since there exists an open set $U\Subset\Omega_1$ such that $\mbox{supp}([\Delta,\psi]v)\subset U$ and $q_1=q_2$ in $\Omega_1$, we get
\[
\left|\int_{\Omega_0}(q_1-q_2)v_2v_1d\mu\right|=\left|\int_\Omega[\Delta,\psi]vv_1d\mu\right|\le c_0\|v\|_{H^1(U)}\|v_1\|_{L^2(\Omega)},
\]
where $c_0=c_0(n,\Omega,\Omega_0)>0$ is a constant. This inequality, combined with Theorem \ref{Thm7.1_CAC_2023} in which $\Omega$ and $q$ are replaced by $\Omega_1$ and $q_1$, \eqref{H^1_estimate} and \eqref{v_j_2}, yields
\begin{align*}
\left|\int_{\Omega_0}(q_1-q_2)v_2v_1d\mu\right|&\le C\sqrt{\lambda}\mathbf{b}_\lambda\left(\rho^p\|v\|_{H^1(\Omega_1)}+\rho^{-1}\|v\|_{H^1(\Sigma)}\right)\|v_1\|_{L^2(\Omega)}
\\
&\le C\sqrt{\lambda}\mathbf{b}_\lambda\left(\rho^p\|v_2\|_{L^2(\Omega)}+\rho^{-1}\|v\|_{H^1(\Sigma)}\right)\|v_1\|_{L^2(\Omega)}
\\
&\le C\sqrt{\lambda}\mathbf{b}_\lambda\left(\sqrt{\lambda}e^{e^{c/\varepsilon}}\rho^p\|u_2\|_{L^2(\Omega_0)}+\rho^{-1}\|v\|_{H^1(\Sigma)}\right)\|v_1\|_{L^2(\Omega)},
\end{align*}
where $\rho>0$ is taken arbitrarily, $C=C(\varsigma,a,\Gamma,\Sigma)>0$ and $p=p(\varsigma,\Sigma)>0$ are generic constants.

It follows from \eqref{v_j_2} that
\begin{align*}
\|v\|_{H^1(\Sigma)}&\le\|(\mathscr{N}_{q_1,\lambda}-\mathscr{N}_{q_2,\lambda})((\partial_\nu\mp ia\sqrt{\lambda})v_2)\|_{H^{3/2}(\Sigma)}
\\
&\le\|(\mathscr{N}_{q_1,\lambda}-\mathscr{N}_{q_2,\lambda}\|\|(\partial_\nu\mp ia\sqrt{\lambda})v_2\|_{H^{1/2}_\Gamma(\partial\Omega)}
\\
&\le C'\lambda e^{e^{c/\varepsilon}}\|\mathscr{N}_{q_1,\lambda}-\mathscr{N}_{q_2,\lambda}\|\|u_2\|_{L^2(\Omega_0)}.
\end{align*}
Hence, \eqref{v_j_2} yields
\begin{align*}
&C\left|\int_{\Omega_0}(q_1-q_2)v_2v_1d\mu\right|
\\
&\hskip 2cm \le \lambda^2\mathbf{b}_\lambda e^{e^{c/\varepsilon}}\left(\rho^p+\rho^{-1}\|\mathscr{N}_{q_1,\lambda}-\mathscr{N}_{q_2,\lambda}\|\right)\|u_1\|_{L^2(\Omega_0)}\|u_2\|_{L^2(\Omega_0)}.
\end{align*}

Taking in this inequality $\rho:=\|\mathscr{N}_{q_1,\lambda}-\mathscr{N}_{q_2,\lambda}\|^{1/(p+1)}$, we get
\[
C\left|\int_{\Omega_0}(q_1-q_2)v_2v_1d\mu\right|
\le \lambda^2\mathbf{b}_\lambda e^{e^{c/\varepsilon}}\|\mathscr{N}_{q_1,\lambda}-\mathscr{N}_{q_2,\lambda}\|^\theta\|u_1\|_{L^2(\Omega_0)}\|u_2\|_{L^2(\Omega_0)},
\]
where $\theta=\theta(\varsigma,\Sigma):=p/(p+1)\in(0,1)$. Putting together this inequality and \eqref{(47)_CAC_2023}, we complete the proof.

\end{proof}

\subsection{Proof of Theorem \ref{triple_log_stability_2}}

\begin{proof}
Define $q:=q_1-q_2$ extended by $0$ in $\mathbb{R}^n\setminus\Omega$. Pick $\eta\in\mathbb{R}^n$ and $\tau\ge\varpi$, where $\varpi=\varpi(n,\Omega_0,\kappa)\ge 1$ is the constant as in Proposition \ref{Prop6.1_CAC_2023} with $X:=\Omega_0$. By repeating the same arguments using Theorem \ref{Prop6.1_CAC_2023} as in the proof of Theorem \ref{triple_log_stability}, we have from Proposition \ref{integral_inequality_2},
\[
C|\hat{q}(\eta)|\le\tau^{-1}+\lambda^6 \varepsilon^{1/8} e^{\varkappa(|\eta|+2\tau)}+\lambda^2\mathbf{b}_\lambda e^{e^{c/\varepsilon}} e^{\varkappa(|\eta|+2\tau)}\|\mathscr{N}_{q_1,\lambda}-\mathscr{N}_{q_2,\lambda}\|^\theta.
\]
Here $C=C(\varsigma,a,\Gamma,\Sigma)$, $c=c(\varsigma,\Gamma)$ and $\theta=\theta(\varsigma,\Sigma)$, where $\varsigma=(n,\Omega,\Omega_0,\kappa)$, are the constants of Proposition \ref{integral_inequality_2}, and $\varkappa=\varkappa(n,\Omega_0,\kappa)>0$ is the constant of Proposition \ref{Prop6.1_CAC_2023}. The rest of the proof is exactly the same as that of Theorem \ref{triple_log_stability}.
\end{proof}

\appendix

\section{A priori estimates}

The following lemma gives a priori estimates for the BVP \eqref{BVP}, which is already obtained by Choulli \cite[Lemma 2.1]{Choulli2023a}. Recall that $\kappa:=\|q_0\|_{L^\infty(\Omega)}+\kappa_0>0$, $\zeta=(n,\Omega,g,\kappa)$ and $u_{q,\lambda}(\varphi)$ denotes the unique solution to the BVP \eqref{BVP}.

\begin{lem}{\cite[Lemma 2.1]{Choulli2023a}}\label{Lem2.1_CAC_2023}

\noindent
{\rm (i)} There exists a constant $c_1=c_1(\zeta)>0$ such that for all $f\in L^2(\Omega)$, $\lambda\in\rho(A_{q_0})$ satisfying $\lambda\ge 1$ and $q\in\mathscr{Q}_\lambda$ we have
\[
\|R_q(\lambda)f\|_{H^j(\Omega)}\le c_1\lambda^{j/2}\mathbf{e}_\lambda\|f\|_{L^2(\Omega)},\quad j=1,2.
\]
{\rm (ii)} There exists a constant $c_2=c_2(\zeta)>0$ such that for all $\varphi\in H^{3/2}(\partial\Omega)$, $\lambda\in\rho(A_{q_0})$ satisfying $\lambda\ge 1$ and $q\in\mathscr{Q}_\lambda$ we have
\[
\|u_{q,\lambda}(\varphi)\|_{H^2(\Omega)}\le c_2\lambda^2\mathbf{e}_\lambda\|\varphi\|_{H^{3/2}(\partial\Omega)}.
\]
\end{lem}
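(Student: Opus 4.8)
The plan is to obtain part (i) from the self-adjoint functional calculus together with the standard elliptic $H^2$ a priori estimate, and then to deduce part (ii) by lifting the boundary datum and reducing to (i). Throughout, the key point to monitor is that the constants coming from elliptic regularity and from the trace lifting do not depend on $\lambda$ or on $q\in\mathscr{L}_\lambda$, so that the whole $\lambda$-growth is carried explicitly by $\mathbf{e}_\lambda$ and by the displayed powers of $\lambda$.

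\textbf{Well-posedness and the resolvent bound.} First I would make precise the perturbation fact used implicitly after the definition of $\mathscr{L}_\lambda$. Fix $\lambda\in\rho(A_{q_0})$ with $\lambda\ge1$ and write $q=q_0+q'\in\mathscr{L}_\lambda$. Since $\|q'\|_{L^\infty(\Omega)}<\|R_{q_0}(\lambda)\|^{-1}$, the operator $B$ of multiplication by $q'$ satisfies $\|BR_{q_0}(\lambda)\|_{\mathscr{B}(L^2(\Omega))}<1$, so $I+BR_{q_0}(\lambda)$ is boundedly invertible on $L^2(\Omega)$ by a Neumann series. Writing $u=R_{q_0}(\lambda)v$ with $u\in H^2(\Omega)\cap H^1_0(\Omega)$, the equation $(A_q-\lambda)u=f$ is equivalent to $(I+BR_{q_0}(\lambda))v=f$; hence $\lambda\in\rho(A_q)$, $R_q(\lambda)=R_{q_0}(\lambda)(I+BR_{q_0}(\lambda))^{-1}$, and $\|q\|_{L^\infty(\Omega)}\le\|q_0\|_{L^\infty(\Omega)}+\|q'\|_{L^\infty(\Omega)}<\kappa$. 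As $q$ is real, $A_q$ is self-adjoint, so by the spectral theorem $\|R_q(\lambda)\|_{\mathscr{B}(L^2(\Omega))}=\mathrm{dist}(\lambda,\sigma(A_q))^{-1}$; since $\sigma(A_q)\subset\sigma_\lambda$, this is at most $\mathrm{dist}(\lambda,\sigma_\lambda)^{-1}\le\mathbf{e}_\lambda$.

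\textbf{Proof of (i).} Put $u=R_q(\lambda)f\in H^2(\Omega)\cap H^1_0(\Omega)$, so $(-\Delta_g+q-\lambda)u=f$ and $\|u\|_{L^2(\Omega)}\le\mathbf{e}_\lambda\|f\|_{L^2(\Omega)}$. For $j=1$ I would test the equation against $\overline u$, integrate by parts (the boundary term vanishes because $u_{|\partial\Omega}=0$) and take real parts; the uniform positivity of $(g^{jk})$ and the two-sided bounds on $\sqrt{|g|}$ then give a constant $C=C(\zeta)>0$ with
\[
\|\nabla u\|_{L^2(\Omega)}^2\le C\big(\|f\|_{L^2(\Omega)}\|u\|_{L^2(\Omega)}+(\kappa+\lambda)\|u\|_{L^2(\Omega)}^2\big)\le C(1+\kappa+\lambda)\mathbf{e}_\lambda^2\|f\|_{L^2(\Omega)}^2,
\]
and since $\lambda\ge1$ and $\mathbf{e}_\lambda\ge1$ the right-hand side is at most $C\lambda\mathbf{e}_\lambda^2\|f\|_{L^2(\Omega)}^2$; together with the $L^2$ bound this is the case $j=1$. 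For $j=2$, rewrite the equation as $-\Delta_g u=f-qu+\lambda u$ with $u_{|\partial\Omega}=0$ and invoke the standard $H^2$ a priori estimate for the Dirichlet problem on the $C^{1,1}$ domain $\Omega$ with the uniformly elliptic $W^{1,\infty}$ operator $\Delta_g$, i.e. $\|u\|_{H^2(\Omega)}\le C(n,\Omega,g)(\|\Delta_g u\|_{L^2(\Omega)}+\|u\|_{L^2(\Omega)})$; since $\|\Delta_g u\|_{L^2(\Omega)}\le\|f\|_{L^2(\Omega)}+(\kappa+\lambda)\|u\|_{L^2(\Omega)}\le C\lambda\mathbf{e}_\lambda\|f\|_{L^2(\Omega)}$, this gives $\|u\|_{H^2(\Omega)}\le C\lambda\mathbf{e}_\lambda\|f\|_{L^2(\Omega)}$, the case $j=2$.

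\textbf{Proof of (ii).} Using a bounded right inverse of the trace operator, choose $\Phi\in H^2(\Omega)$ with $\Phi_{|\partial\Omega}=\varphi$ and $\|\Phi\|_{H^2(\Omega)}\le C(n,\Omega)\|\varphi\|_{H^{3/2}(\partial\Omega)}$. Then $w:=u_{q,\lambda}(\varphi)-\Phi\in H^2(\Omega)\cap H^1_0(\Omega)$ solves $(-\Delta_g+q-\lambda)w=(\Delta_g-q+\lambda)\Phi=:F\in L^2(\Omega)$, so $w=R_q(\lambda)F$ with $\|F\|_{L^2(\Omega)}\le C(1+\kappa+\lambda)\|\Phi\|_{H^2(\Omega)}\le C\lambda\|\varphi\|_{H^{3/2}(\partial\Omega)}$. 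Applying part (i) with $j=2$ gives $\|w\|_{H^2(\Omega)}\le c_1\lambda\mathbf{e}_\lambda\|F\|_{L^2(\Omega)}\le C\lambda^2\mathbf{e}_\lambda\|\varphi\|_{H^{3/2}(\partial\Omega)}$, whence $\|u_{q,\lambda}(\varphi)\|_{H^2(\Omega)}\le\|w\|_{H^2(\Omega)}+\|\Phi\|_{H^2(\Omega)}\le C\lambda^2\mathbf{e}_\lambda\|\varphi\|_{H^{3/2}(\partial\Omega)}$, as asserted. The one genuine (if mild) difficulty is uniformity of constants: one must check that the $H^2$ elliptic estimate and the trace lifting hold with constants depending only on $n,\Omega,g$ (and, via $\kappa$, on the $L^\infty$ bound on $q$), after which the many lower-order terms are consolidated using $\lambda\ge1$ and $\mathbf{e}_\lambda\ge1$ to arrive at exactly $\lambda^{j/2}\mathbf{e}_\lambda$ and $\lambda^2\mathbf{e}_\lambda$.
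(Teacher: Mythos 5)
Your proposal is correct and follows essentially the same route as the paper: the $L^2$ bound $\|R_q(\lambda)f\|_{L^2}\le\mathbf{e}_\lambda\|f\|_{L^2}$ via the self-adjointness of $A_q$ (the paper uses the eigenfunction expansion, which is the same fact), the $H^1$ case by testing the equation and using $\lambda\ge 1$, the $H^2$ case by the standard elliptic a priori estimate, and part (ii) by lifting the trace and applying (i) with $j=2$. The constants are tracked the same way, so there is nothing substantive to add.
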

\begin{proof}
First, note that for any $\lambda\in \rho(A_{q_0})$, we have $\lambda\in\bigcap_{q\in\mathscr{Q}_\lambda}\rho(A_q)$. Indeed, if this were not the case, then there would exist $q\in\mathscr{Q}_\lambda$ and $0\neq u\in H^2(\Omega)\cap H^1_0(\Omega)$ such that $u=-R_{q_0}(\lambda)(q' u)$ and $\|u\|_{L^2(\Omega)}\le\|R_{q_0}(\lambda)\|\|q'\|_{L^\infty(\Omega)}\|u\|_{L^2(\Omega)}<\|u\|_{L^2(\Omega)}$, which is a contradiction.

(i) Let $f\in L^2(\Omega)$ and $u=R_q(\lambda)f$. The usual a priori estimate in $H^2(\Omega)$ (e.g., \cite[(2.3.3.1)]{Grisvard1985} with \cite[(2.3.3.8)]{Grisvard1985}) yields
\begin{align}\label{global_elliptic_estimate}
\|u\|_{H^2(\Omega)}&\le c_0\left(\|\Delta_g u\|_{L^2(\Omega)}+\|u\|_{L^2(\Omega)}\right)
\\
&\le c_0\left((\lambda+\kappa+1)\|u\|_{L^2(\Omega)}+\|f\|_{L^2(\Omega)}\right)\notag
\\
&\le c_0\lambda\left((2+\kappa)\|u\|_{L^2(\Omega)}+\|f\|_{L^2(\Omega)}\right),\notag
\end{align}
where $c_0=c_0(n,\Omega,g)>0$ is a constant.

On the other hand, we have
\[
u=\sum_{k=1}^\infty \frac{(f,\phi_q^k)_{L^2(\Omega)}}{\lambda_q^k-\lambda}\phi_q^k,
\]
where $(\lambda_q^k)$ is the sequence of eigenvalues of $A_q$ satisfying
\[
-\infty<\lambda_q^1\le\cdots\le \lambda_q^k\le\cdots\; \text{and}\; \lim_{k\to\infty}\lambda_q^k=\infty
\]
and $(\phi_q^k)\subset H^2(\Omega)\cap H^1_0(\Omega)$ is an orthonormal basis of $L^2(\Omega)$ consisting of eigenfunctions. Hence, we obtain
\[
\|u\|_{L^2(\Omega)}^2=\sum_{k=1}^\infty\frac{|(f,\phi_q^k)_{L^2(\Omega)}|^2}{|\lambda_q^k-\lambda|^2}\le \mathbf{e}_\lambda^2\|f\|_{L^2(\Omega)}^2.
\]
This inequality, \eqref{global_elliptic_estimate} and $\mathbf{e}_\lambda\ge 1$ give the expected inequality for $j=2$. The case $j=1$ is deduced from this inequality, $\mathbf{e}_\lambda\ge 1$ and
\begin{align*}
\int_\Omega \sum_{j,k=1}^n g^{jk}\partial_j u\partial_k u d\mu&=\int_\Omega(\lambda-q)|u|^2d\mu+\int_\Omega f\overline{u}d\mu
\\
&\le \lambda(1+\kappa)\|u\|_{L^2(\Omega)}^2+\|u\|_{L^2(\Omega)}\|f\|_{L^2(\Omega)}.
\end{align*}

(ii)  Let $\Psi\in H^2(\Omega)$ so that $\Psi_{|\partial\Omega}=\varphi$ and $\|\Psi\|_{H^2(\Omega)}\le c_0\|\varphi\|_{H^{3/2}(\partial\Omega)}$, where $c_0=c_0(n,\Omega,g)>0$ is a constant. Since we have
\[
u_{q,\lambda}(\varphi)-\Psi=R_q(\lambda)((\Delta_g-q+\lambda)\Psi),
\]
applying the inequality in (i) for $j=2$ yields
\begin{align*}
\|u_{q,\lambda}(\varphi)\|_{H^2(\Omega)}&\le \|\Psi\|_{H^2(\Omega)}+c_1\lambda\mathbf{e}_\lambda\left(\|\Psi\|_{H^2(\Omega)}+(\lambda+\kappa)\|\Psi\|_{L^2(\Omega)}\right)
\\
&\le \lambda^2\mathbf{e}_\lambda \left(1+c_1(2+\kappa)\right) c_0\|\varphi\|_{H^{3/2}(\partial\Omega)}.
\end{align*}
Taking $c_2:=c_0\left(1+c_1(2+\kappa))\right)>0$ completes the proof.
\end{proof}

The following lemma is a result on the regularity and a priori estimate for the BVP \eqref{BVP_2}, which is already obtained by Choulli \cite[Section 7]{Choulli2023a} (see also Baskin-Spence-Wunsch \cite[Corollary 1.11]{Baskin2016}). Recall that $\zeta_0=(n,\Omega,\kappa)$ and $u_{q,\lambda}^\pm(f,\varphi)$ denotes the unique solution to the BVP \eqref{BVP_2}.

\begin{lem}\label{auxiliary_estimates}
Fix $\kappa>0$ arbitrarily. There exist constants $\lambda_0=\lambda_0(\zeta_0,a)>0$ and $c_1=c_1(\zeta_0,a)>0$ such that for all $\lambda\ge\lambda_0$, $q\in \mathscr{Q}$, $f\in L^2(\Omega)$, $\varphi\in H^{1/2}(\partial\Omega)$ we have $u=u_{q,\lambda}^\pm(f,\varphi)\in H^2(\Omega)$ and
\begin{align}
&\label{H^1_estimate}
\|u\|_{H^1(\Omega)}\le c_1\left(\|f\|_{L^2(\Omega)}+\|\varphi\|_{L^2(\partial\Omega)}\right),
\\
&\label{(39)_CAC_2023}
\|u\|_{H^2(\Omega)}\le c_1\sqrt{\lambda}\left(\|f\|_{L^2(\Omega)}+\|\varphi\|_{H^{1/2}(\partial\Omega)}\right).
\end{align}
\end{lem}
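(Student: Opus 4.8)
\textbf{Proof proposal for Lemma \ref{auxiliary_estimates}.}

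The plan is to combine an $H^1$ a priori estimate obtained by a direct energy argument applied to the variational formulation of \eqref{BVP_2} with the elliptic regularity shift, and then to track the dependence on $\lambda$ explicitly in each step. First I would write the weak formulation: $u=u_{q,\lambda}^\pm(f,\varphi)\in H^1(\Omega)$ solves
\[
\int_\Omega\nabla u\cdot\overline{\nabla v}\,dx+\int_\Omega(q-\lambda)u\overline{v}\,dx\mp ia\sqrt{\lambda}\int_{\partial\Omega}u\overline{v}\,ds=\int_\Omega f\overline{v}\,dx+\int_{\partial\Omega}\varphi\overline{v}\,ds,\quad v\in H^1(\Omega).
\]
Taking $v=u$ and separating real and imaginary parts, the imaginary part yields $\sqrt{\lambda}\,\|\sqrt{a}\,u\|_{L^2(\partial\Omega)}^2$ controlled by $\|f\|_{L^2(\Omega)}\|u\|_{L^2(\Omega)}+\|\varphi\|_{L^2(\partial\Omega)}\|u\|_{L^2(\partial\Omega)}$, giving a boundary trace bound, while the real part yields $\|\nabla u\|_{L^2(\Omega)}^2-\lambda\|u\|_{L^2(\Omega)}^2\le (\kappa+1)\|u\|_{L^2(\Omega)}^2+\ldots$. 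The subtlety is that $-\lambda\|u\|_{L^2(\Omega)}^2$ has the wrong sign, so the energy identity alone does not close; this is where the largeness of $\lambda$ and the positivity of $a$ must be exploited — one absorbs the bulk $L^2$ norm using the boundary damping term through a Rellich-type / Morawetz multiplier identity (multiplying the equation by $x\cdot\nabla u$, as in \cite[Proposition A.1]{Krupchyk2019}). Concretely I would invoke the multiplier $m\cdot\nabla u$ with $m(x)=x-x_0$ to derive an identity relating $\|\nabla u\|_{L^2(\Omega)}^2+\lambda\|u\|_{L^2(\Omega)}^2$ to boundary terms and $\|f\|_{L^2(\Omega)}^2$, and then combine it with the energy identity to obtain, for $\lambda\ge\lambda_0$ large enough (depending on $n,\Omega,\kappa,a$), the bound $\|u\|_{H^1(\Omega)}\le c_1(\|f\|_{L^2(\Omega)}+\|\varphi\|_{L^2(\partial\Omega)})$, which is \eqref{H^1_estimate}. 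The role of $\lambda_0$ is precisely to ensure that the lower-order perturbation $q$ and the geometric error terms from the multiplier identity can be absorbed; the main obstacle is carrying out this absorption cleanly while keeping the constant $c_1$ independent of $\lambda$.

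Next I would upgrade to $H^2$. Rewrite \eqref{BVP_2} as a Poisson problem $-\Delta u=f-(q-\lambda)u$ in $\Omega$ with the oblique boundary condition $\partial_\nu u=\varphi\pm ia\sqrt{\lambda}u$ on $\partial\Omega$. Since $\Omega$ is $C^\infty$ and $a\in C^\infty(\overline\Omega)$, the standard elliptic $H^2$ a priori estimate for the Neumann-type problem (e.g.\ \cite[(2.3.3.1)]{Grisvard1985}) gives
\[
\|u\|_{H^2(\Omega)}\le c_0\left(\|f-(q-\lambda)u\|_{L^2(\Omega)}+\|\varphi\pm ia\sqrt{\lambda}u\|_{H^{1/2}(\partial\Omega)}+\|u\|_{L^2(\Omega)}\right).
\]
One estimates $\|(q-\lambda)u\|_{L^2(\Omega)}\le(\kappa+\lambda)\|u\|_{L^2(\Omega)}\le c\lambda\|u\|_{L^2(\Omega)}$ and, for the boundary term, $\|ia\sqrt{\lambda}u\|_{H^{1/2}(\partial\Omega)}\le c\sqrt{\lambda}\|u\|_{H^1(\Omega)}$ by the trace theorem and the multiplication property of $a\in C^\infty$. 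Feeding in \eqref{H^1_estimate} to control $\|u\|_{H^1(\Omega)}$ and $\|u\|_{L^2(\Omega)}$ by $\|f\|_{L^2(\Omega)}+\|\varphi\|_{L^2(\partial\Omega)}\le\|f\|_{L^2(\Omega)}+c\|\varphi\|_{H^{1/2}(\partial\Omega)}$, and collecting powers of $\lambda$, the dominant factor is $\sqrt{\lambda}$ coming from the boundary term $\sqrt{\lambda}\|u\|_{H^1(\Omega)}$ and from $\lambda\|u\|_{L^2(\Omega)}$ — here one also uses that \eqref{H^1_estimate} provides a $\lambda$-independent bound on $\|u\|_{L^2}$, so $\lambda\|u\|_{L^2}\le c\lambda(\|f\|+\|\varphi\|_{H^{1/2}})$, which would give a factor $\lambda$ rather than $\sqrt\lambda$. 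To get the claimed $\sqrt\lambda$ one needs a sharper $L^2$ bound: refining the energy/multiplier estimate to show $\lambda\|u\|_{L^2(\Omega)}^2\le c(\|f\|_{L^2(\Omega)}^2+\lambda\|\varphi\|_{L^2(\partial\Omega)}^2+\ldots)$ would yield $\sqrt\lambda\|u\|_{L^2}\le c(\|f\|+\sqrt\lambda\|\varphi\|_{L^2})$ and hence the desired $\|u\|_{H^2(\Omega)}\le c_1\sqrt{\lambda}(\|f\|_{L^2(\Omega)}+\|\varphi\|_{H^{1/2}(\partial\Omega)})$, which is \eqref{(39)_CAC_2023}.

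The main obstacle is therefore the Rellich/Morawetz multiplier computation that produces the $\lambda$-uniform control of $\|\nabla u\|_{L^2}$ and the correctly weighted control of $\lambda\|u\|_{L^2}^2$ simultaneously: one has to choose the multiplier field carefully so that the bulk terms have favorable signs, absorb the potential $q$ for $\lambda\ge\lambda_0$, and handle the cross terms on $\partial\Omega$ using the sign of $a$ and the already-established boundary trace bound. Once this weighted estimate is in hand, the regularity statement $u\in H^2(\Omega)$ and \eqref{(39)_CAC_2023} follow from the elliptic shift as above, and \eqref{H^1_estimate} is a byproduct. Since the paper points to \cite[Section 7]{Choulli2023a} and \cite[Corollary 1.11]{Baskin2016}, I would also indicate that one may alternatively quote these references directly for the regularity and the $\sqrt\lambda$ scaling, and then only verify that the constants depend on the stated quantities $\zeta_0=(n,\Omega,\kappa)$ and $a$.
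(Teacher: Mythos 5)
Your second half (recasting \eqref{BVP_2} as a Neumann-type problem, invoking the Grisvard $H^2$ a priori estimate, and tracking powers of $\lambda$) is exactly the paper's argument, and you correctly identify that everything hinges on a $\lambda$-weighted $L^2$ bound for $u$. The first half, however, has two concrete problems. The Rellich--Morawetz multiplier $m(x)=x-x_0$ produces boundary terms with a favorable sign only when $\Omega$ is star-shaped with respect to $x_0$ (or after constructing a suitable vector field, which is nontrivial for a general domain); here $\Omega$ is an arbitrary $C^\infty$ bounded domain, and this is precisely why the paper does not re-derive the first-order estimate but quotes \cite[Theorem 1.8]{Baskin2016}, which gives, with no geometric hypothesis,
\[
\|\nabla u\|_{L^2(\Omega)}+\sqrt{\lambda}\,\|u\|_{L^2(\Omega)}\le c_0\left(\|qu+f\|_{L^2(\Omega)}+\|\varphi\|_{L^2(\partial\Omega)}\right).
\]
The potential is then absorbed by choosing $\lambda_0:=4c_0^2\kappa^2$ (this is the only role of $\lambda_0$), yielding \eqref{H^1_estimate} together with the decay $\|u\|_{L^2(\Omega)}\le 2c_0\lambda^{-1/2}\left(\|f\|_{L^2(\Omega)}+\|\varphi\|_{L^2(\partial\Omega)}\right)$; the unique $H^1$ solvability itself is taken from \cite[Proposition A.1]{Krupchyk2019}, as stated before the lemma.

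More importantly, the ``sharper $L^2$ bound'' you propose, $\lambda\|u\|_{L^2(\Omega)}^2\le c\left(\|f\|_{L^2(\Omega)}^2+\lambda\|\varphi\|_{L^2(\partial\Omega)}^2+\cdots\right)$, is too weak to close the argument: fed into the elliptic shift it gives $\lambda\|u\|_{L^2(\Omega)}\le c\left(\sqrt{\lambda}\|f\|_{L^2(\Omega)}+\lambda\|\varphi\|_{L^2(\partial\Omega)}\right)$, and $\lambda\|\varphi\|_{L^2(\partial\Omega)}$ cannot be dominated by $\sqrt{\lambda}\|\varphi\|_{H^{1/2}(\partial\Omega)}$ uniformly in $\lambda$, so you would only reach an estimate of order $\lambda\left(\|f\|_{L^2(\Omega)}+\|\varphi\|_{H^{1/2}(\partial\Omega)}\right)$, losing a factor $\sqrt{\lambda}$ against \eqref{(39)_CAC_2023}. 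What is actually needed, and what the quoted estimate provides, is the bound with no $\lambda$-weight on the boundary datum, namely $\sqrt{\lambda}\,\|u\|_{L^2(\Omega)}\le c\left(\|f\|_{L^2(\Omega)}+\|\varphi\|_{L^2(\partial\Omega)}\right)$. Your closing remark that one may instead quote \cite{Baskin2016} and \cite{Choulli2023a} directly is indeed the paper's route; with that substitution, the remainder of your write-up coincides with the paper's proof.
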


\begin{proof}
Let $u:=u_{q,\lambda}^\pm(f,\varphi)\in H^2(\Omega)$ be the unique solution to the BVP \eqref{BVP_2}. According to \cite[Theorem 1.8]{Baskin2016}, we have
\[
\|\nabla u\|_{L^2(\Omega)}+\sqrt{\lambda}\|u\|_{L^2(\Omega)}\le c_0\left(\|q u+f\|_{L^2(\Omega)}+\|\varphi\|_{L^2(\partial\Omega)}\right),
\]
where $c_0=c_0(n,\Omega,a)>0$ is a constant. Set $\lambda_0:=4c_0^2\kappa^2$. For any $\lambda\ge\lambda_0$ and $q\in\mathscr{Q}$, we have
\begin{align*}
\|\nabla u\|_{L^2(\Omega)}+\sqrt{\lambda}\|u\|_{L^2(\Omega)}&\le c_0\left(\kappa\|u\|_{L^2(\Omega)}+\|f\|_{L^2(\Omega)}+\|\varphi\|_{L^2(\partial\Omega)}\right)
\\
&\le (\sqrt{\lambda}/2)\|u\|_{L^2(\Omega)}+c_0\left(\|f\|_{L^2(\Omega)}+\|\varphi\|_{L^2(\partial\Omega)}\right),
\end{align*}
which implies
\[
\|\nabla u\|_{L^2(\Omega)}+\sqrt{\lambda}\|u\|_{L^2(\Omega)}\le 2c_0\left(\|f\|_{L^2(\Omega)}+\|\varphi\|_{L^2(\partial\Omega)}\right).
\]
From this inequality, we have
\[
\|u\|_{L^2(\Omega)}\le \frac{2c_0}{\sqrt{\lambda}}\left(\|f\|_{L^2(\Omega)}+\|\varphi\|_{L^2(\partial\Omega)}\right)
\]
and
\[
\|u\|_{H^1(\Omega)}\le 2c_0\sqrt{1+\lambda_0^{-1}}\left(\|f\|_{L^2(\Omega)}+\|\varphi\|_{L^2(\partial\Omega)}\right),
\]
from which we obtain \eqref{H^1_estimate} with $c_1=c_1(\zeta_0,a):=2c_0\sqrt{1+\lambda_0^{-1}}$.

Henceforth, $c_1=c_1(\zeta_0,a)>0$ will denote a generic constant. Putting together these inequalities and the usual a priori estimate in $H^2(\Omega)$ (e.g., \cite[(2.3.3.1)]{Grisvard1985} with \cite[(2.3.3.8)]{Grisvard1985}) yields
\begin{align*}
\|u\|_{H^2(\Omega)}&\le c_1\left(\|\Delta u\|_{L^2(\Omega)}+\|u\|_{L^2(\Omega)}+\|\partial_\nu u\|_{H^{1/2}(\partial\Omega)}\right)
\\
&\le c_1\left((\lambda+\kappa+1)\|u\|_{L^2(\Omega)}+\|f\|_{L^2(\Omega)}+\|\pm ia\sqrt{\lambda} u+\varphi\|_{H^{1/2}(\partial\Omega)}\right)
\\
&\le c_1\left(\lambda(1+(\kappa+1)/\lambda_0)\|u\|_{L^2(\Omega)}+\sqrt{\lambda}\|u\|_{H^1(\Omega)}\right.
\\
&\hskip 6cm \left.+\|f\|_{L^2(\Omega)}+\|\varphi\|_{H^{1/2}(\partial\Omega)}\right)
\\
&\le c_1\sqrt{\lambda}\left(\|f\|_{L^2(\Omega)}+\|\varphi\|_{H^{1/2}(\partial\Omega)}\right).
\end{align*}
The proof is then complete.
\end{proof}

\section{Local quantitative uniqueness of continuation}

Fix $\kappa>0$ arbitrarily and recall that $\zeta=(n,\Omega,g,\kappa)$ and
\[
H^1_{0,\Sigma}(\Omega)=\{u\in H^1(\Omega);\; u_{|\Sigma}=0\}.
\]

\begin{thm}{\cite[Theorem 2.2]{Choulli2023a}}\label{Thm2.2_CAC_2023}
Let $U\Subset\Omega$ be a nonempty open set. There exist constants $p=p(\zeta,\Sigma,U)>0$ and $C=C(\zeta,\Sigma,U)>0$ such that for all $\lambda\ge 1$, $q\in L^\infty(\Omega)$ satisfying $\|q\|_{L^\infty(\Omega)}\le\kappa$, $\varepsilon>0$ and $u\in H^2(\Omega)\cap H^1_{0,\Sigma}(\Omega)$ we have
\[
C\|u\|_{H^1(U)}\le\mathbf{b}_\lambda\left(\sqrt{\lambda}\varepsilon^p\|u\|_{H^1(\Omega)}+\varepsilon^{-1}\left(\|(\Delta_g-q+\lambda)u\|_{L^2(\Omega)}+\|\partial_{\nu_g} u\|_{L^2(\Sigma)}\right)\right).
\]
\end{thm}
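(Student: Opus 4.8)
The plan is to reduce the estimate, exactly as in the proof of Theorem~\ref{Thm2.3_CAC_2023}, to an \emph{interior} quantitative uniqueness of continuation estimate for a $\lambda$–free elliptic operator on the cylinder $Q:=\Omega\times(0,1)$. Given $u\in H^2(\Omega)\cap H^1_{0,\Sigma}(\Omega)$, set $v(x,t):=e^{\sqrt{\lambda}\,t}u(x)$. Since $\partial_t^2v=\lambda v$, the function $v\in H^2(Q)$ satisfies
\[
(\Delta_g+\partial_t^2-q)v=e^{\sqrt{\lambda}\,t}(\Delta_g-q+\lambda)u\quad\text{in }Q,\qquad v_{|\Sigma\times(0,1)}=0,
\]
and, the $t$–direction being tangential to the lateral boundary, $\partial_{\nu_g}v_{|\Sigma\times(0,1)}=e^{\sqrt{\lambda}\,t}\,\partial_{\nu_g}u_{|\Sigma}$.

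The analytic ingredient I would invoke is a local quantitative uniqueness of continuation statement for $\Delta_g+\partial_t^2-q$ on the Lipschitz domain $Q$ (of the type established in \cite{Choulli2020}): there are constants $p=p(\zeta,\Sigma,U)>0$ and $C_0=C_0(\zeta,\Sigma,U)>0$ such that, for every $w\in H^2(Q)$ with $w_{|\Sigma\times(0,1)}=0$ and every $\varepsilon>0$,
\[
C_0\|w\|_{H^1(U\times(1/4,3/4))}\le\varepsilon^{p}\|w\|_{H^1(Q)}+\varepsilon^{-1}\!\left(\|(\Delta_g+\partial_t^2-q)w\|_{L^2(Q)}+\|\partial_{\nu_g}w\|_{L^2(\Sigma\times(0,1))}\right).
\]
Such an estimate follows from propagation of smallness out of the Cauchy data on the portion $\Sigma\times(0,1)$ of the lateral boundary: a boundary three–ball (or doubling) inequality, which is applicable because $w_{|\Sigma\times(0,1)}=0$, starts the chain from the single datum $\partial_{\nu_g}w_{|\Sigma\times(0,1)}$, and a finite chain of interior three–ball inequalities whose length is governed by the relative position of $U$, $\Sigma$ and $\Omega$ transports the smallness into $U\times(1/4,3/4)$; hence $p$ and $C_0$ depend only on $n,\Omega,g,\kappa,\Sigma,U$. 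The only point calling for a remark is that $Q$ has corners along $\partial\Omega\times\{0,1\}$, but the propagation can be confined to the slab $\{1/8<t<7/8\}$, whose lateral boundary is as regular as $\partial\Omega$, so this is harmless; this is the local counterpart of the global (up to the lateral boundary) estimate used for Theorem~\ref{Thm2.3_CAC_2023}, and reproving it from scratch — the Carleman/three–ball machinery with explicit tracking of the exponent $p$ — is the substantive part of the argument.

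It then remains to substitute $w=v$ and carry out the bookkeeping of the $t$–weights. Using $\lambda\ge1$ one has $|\partial_tv|^2=\lambda e^{2\sqrt{\lambda}t}|u|^2\ge e^{2\sqrt{\lambda}t}|u|^2$, whence
\[
\|v\|_{H^1(U\times(1/4,3/4))}^2\ge\Big(\int_{1/4}^{3/4}e^{2\sqrt{\lambda}t}\,dt\Big)\|u\|_{H^1(U)}^2,
\]
while $\|v\|_{H^1(Q)}^2\le(1+\lambda)\big(\int_0^1e^{2\sqrt{\lambda}t}\,dt\big)\|u\|_{H^1(\Omega)}^2\le2\lambda\big(\int_0^1e^{2\sqrt{\lambda}t}\,dt\big)\|u\|_{H^1(\Omega)}^2$, and $\|(\Delta_g+\partial_t^2-q)v\|_{L^2(Q)}^2$ and $\|\partial_{\nu_g}v\|_{L^2(\Sigma\times(0,1))}^2$ equal $\big(\int_0^1e^{2\sqrt{\lambda}t}\,dt\big)$ times $\|(\Delta_g-q+\lambda)u\|_{L^2(\Omega)}^2$ and $\|\partial_{\nu_g}u\|_{L^2(\Sigma)}^2$ respectively. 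Dividing the displayed interior estimate by $\big(\int_{1/4}^{3/4}e^{2\sqrt{\lambda}t}\,dt\big)^{1/2}$ and using the elementary identity
\[
\frac{\int_0^1e^{2\sqrt{\lambda}t}\,dt}{\int_{1/4}^{3/4}e^{2\sqrt{\lambda}t}\,dt}=\frac{\sinh\sqrt{\lambda}}{\sinh(\sqrt{\lambda}/2)}=2\cosh(\sqrt{\lambda}/2)=\mathbf{b}_\lambda^2
\]
produces precisely the asserted inequality, the numerical factors being absorbed into a new generic constant $C$. Thus the only source of the spectral parameter in the modulus of continuity is the comparison of the two exponential $t$–weights, which is exactly what forces the factor $\mathbf{b}_\lambda$.
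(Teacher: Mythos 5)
Your proposal is correct and follows essentially the same route as the paper: lift $u$ to the cylinder via $v=e^{\sqrt{\lambda}t}u$, invoke a local quantitative uniqueness of continuation estimate for the $\lambda$–free operator $\Delta_g+\partial_t^2-q$ on $\Omega\times(0,1)$ with a homogeneous Dirichlet trace on $\Sigma\times(0,1)$, and perform the weight bookkeeping culminating in the identity $\sinh\sqrt{\lambda}/\sinh(\sqrt{\lambda}/2)=\mathbf b_\lambda^2$. The only divergence is that the paper simply cites the local estimate as \cite[Proposition A.3]{Bellassoued2023} (rather than sketching its three-ball/Carleman derivation as you do), so the "substantive part" you flag is already available as a black box.
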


\begin{proof}
According to \cite[Proposition A.3]{Bellassoued2023}, there exist constants $C=C(\zeta,\Sigma,U)>0$ and $p=p(\zeta,\Sigma,U)>0$ such that for any $\varepsilon>0$ and $v\in H^2(\Omega\times(0,1))$ satisfying $v_{|\Sigma\times(0,1)}=0$, we have
\begin{align*}
&C\|v\|_{H^1(U\times(1/4,3/4))}\le \varepsilon^p\|v\|_{H^1(\Omega\times(0,1))}
\\
&\hskip 3cm +\varepsilon^{-1}\left(\|(\Delta_g+\partial_t^2-q)v\|_{L^2(\Omega\times(0,1))}+\|\partial_{\nu_g} v\|_{L^2(\Sigma\times(0,1))}\right).
\end{align*}

Henceforth, $C=C(\zeta,\Sigma,U)>0$ will denote a generic constant. Since $v:=e^{\sqrt{\lambda}t}u$ with $u\in H^2(\Omega)\cap H^1_{0,\Sigma}(\Omega)$ and $\lambda\ge 1$ satisfies
\[
C\|v\|_{H^1(U\times(1/4,3/4))}\ge \left(\lambda^{-1/2}e^{\sqrt{\lambda}}\sinh(\sqrt{\lambda}/2)\right)^{1/2}\|u\|_{H^1(U)},
\]
\[
C\|v\|_{H^1(\Omega\times(0,1))}\le \sqrt{\lambda}\left(\lambda^{-1/2} e^{\sqrt{\lambda}}\sinh\sqrt{\lambda}\right)^{1/2}\|u\|_{H^1(\Omega)}
\]
and
\begin{align*}
&C\left(\|(\Delta_g+\partial_t^2-q)v\|_{L^2(\Omega\times(0,1))}+\|\partial_{\nu_g} v\|_{L^2(\Sigma\times(0,1))}\right)
\\
&\hskip 1cm \le \left(\lambda^{-1/2}e^{\sqrt{\lambda}}\sinh\sqrt{\lambda}\right)^{1/2}\left(\|(\Delta_g-q+\lambda)u\|_{L^2(\Omega)}+\|\partial_{\nu_g} u\|_{L^2(\Sigma)}\right),
\end{align*}
by using $\frac{\sinh\sqrt{\lambda}}{\sinh(\sqrt{\lambda}/2)}=2\cosh(\sqrt{\lambda}/2)=\mathbf{b}_\lambda^2$, we obtain the expected inequality.
\end{proof}

Next, recall that
\[
\mathscr{H}^\pm=\left\{u\in H^2(\Omega);\; (\partial_\nu\mp ia\sqrt{\lambda})u_{|\Sigma}=0\right\}
\]
and $\zeta_0=(n,\Omega,\kappa)$.

\begin{thm}{\cite[Theorem 7.1]{Choulli2023a}}\label{Thm7.1_CAC_2023}
Let $U\Subset\Omega$ be a nonempty open set and $\lambda_0=\lambda_0(\zeta_0,a)>0$ be the constant of Lemma \ref{auxiliary_estimates}. There exist constants $p=p(\zeta_0,\Sigma,U)>0$ and $C=C(\zeta_0,a,\Sigma,U)>0$  such that for all  $\lambda\ge\lambda_0$, $q\in L^\infty(\Omega)$ satisfying $\|q\|_{L^\infty(\Omega)}\le\kappa$, $\varepsilon>0$ and $u\in \mathscr{H}^\pm$ we have
\[
C\|u\|_{H^1(U)}\le\sqrt{\lambda}\mathbf{b}_\lambda\left(\varepsilon^p\|u\|_{H^1(\Omega)}+\varepsilon^{-1}\left(\|(\Delta-q+\lambda)u\|_{L^2(\Omega)}+\|u\|_{H^1(\Sigma)}\right)\right).
\]
\end{thm}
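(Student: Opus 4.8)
The plan is to reduce to a local quantitative uniqueness of continuation estimate free of the spectral parameter by the conformal device used in the proof of Theorem \ref{Thm2.2_CAC_2023}, the only new ingredient being that the lateral condition $v_{|\Sigma\times(0,1)}=0$ is now replaced by the Cauchy data produced by the impedance relation. Concretely, I would first invoke (a Cauchy-data version of) the interior Carleman estimate of \cite[Proposition A.3]{Bellassoued2023} for the elliptic operator $-\Delta-\partial_t^2+q$ on the Lipschitz cylinder $\Omega\times(0,1)$: there are $p=p(\zeta_0,\Sigma,U)>0$ and $C=C(\zeta_0,\Sigma,U)>0$ such that for every $\varepsilon>0$ and every $v\in H^2(\Omega\times(0,1))$ (with no condition on $v_{|\Sigma\times(0,1)}$),
\[
C\|v\|_{H^1(U\times(1/4,3/4))}\le \varepsilon^p\|v\|_{H^1(\Omega\times(0,1))}+\varepsilon^{-1}\bigl(\|(\Delta+\partial_t^2-q)v\|_{L^2(\Omega\times(0,1))}+\|\partial_\nu v\|_{L^2(\Sigma\times(0,1))}+\|v\|_{H^1(\Sigma\times(0,1))}\bigr).
\]
This is the same Carleman argument as in \cite[Proposition A.3]{Bellassoued2023}, retaining both boundary traces on $\Sigma\times(0,1)$ rather than discarding $v_{|\Sigma\times(0,1)}$.

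Next, for $u\in\mathscr{H}^\pm$ I would apply this with $v:=e^{\sqrt\lambda t}u$. Since $u$ does not depend on $t$, $\partial_t^2 v=\lambda v$ and $(\Delta+\partial_t^2-q)v=e^{\sqrt\lambda t}(\Delta-q+\lambda)u$, while the membership $u\in\mathscr{H}^\pm$ gives $\partial_\nu v=\pm ia\sqrt\lambda e^{\sqrt\lambda t}u=\pm ia\sqrt\lambda v$ on $\Sigma\times(0,1)$, so that $\|\partial_\nu v\|_{L^2(\Sigma\times(0,1))}\le\sqrt\lambda\,\|a\|_{L^\infty(\partial\Omega)}\|v\|_{L^2(\Sigma\times(0,1))}$. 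Then I would evaluate the cylinder norms by factoring out $\int_{1/4}^{3/4}e^{2\sqrt\lambda t}\,dt=\lambda^{-1/2}e^{\sqrt\lambda}\sinh(\sqrt\lambda/2)$ and $\int_0^1 e^{2\sqrt\lambda t}\,dt=\lambda^{-1/2}e^{\sqrt\lambda}\sinh\sqrt\lambda$, exactly as in Theorem \ref{Thm2.2_CAC_2023}: $C\|v\|_{H^1(U\times(1/4,3/4))}\ge(\lambda^{-1/2}e^{\sqrt\lambda}\sinh(\sqrt\lambda/2))^{1/2}\|u\|_{H^1(U)}$; $C\|v\|_{H^1(\Omega\times(0,1))}\le\sqrt\lambda(\lambda^{-1/2}e^{\sqrt\lambda}\sinh\sqrt\lambda)^{1/2}\|u\|_{H^1(\Omega)}$; $C\|(\Delta+\partial_t^2-q)v\|_{L^2(\Omega\times(0,1))}\le(\lambda^{-1/2}e^{\sqrt\lambda}\sinh\sqrt\lambda)^{1/2}\|(\Delta-q+\lambda)u\|_{L^2(\Omega)}$; and, using the impedance relation together with $\|u\|_{L^2(\Sigma)}\le\|u\|_{H^1(\Sigma)}$, $C\bigl(\|\partial_\nu v\|_{L^2(\Sigma\times(0,1))}+\|v\|_{H^1(\Sigma\times(0,1))}\bigr)\le\sqrt\lambda(\lambda^{-1/2}e^{\sqrt\lambda}\sinh\sqrt\lambda)^{1/2}\|u\|_{H^1(\Sigma)}$. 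Dividing the Carleman estimate through by $(\lambda^{-1/2}e^{\sqrt\lambda}\sinh(\sqrt\lambda/2))^{1/2}$ and using that the ratio of the two exponential factors equals $(\sinh\sqrt\lambda/\sinh(\sqrt\lambda/2))^{1/2}=(2\cosh(\sqrt\lambda/2))^{1/2}=\mathbf{b}_\lambda$, I would obtain $C\|u\|_{H^1(U)}\le\sqrt\lambda\,\mathbf{b}_\lambda\bigl(\varepsilon^p\|u\|_{H^1(\Omega)}+\varepsilon^{-1}(\|(\Delta-q+\lambda)u\|_{L^2(\Omega)}+\|u\|_{H^1(\Sigma)})\bigr)$ after absorbing the remaining powers of $\sqrt\lambda\ge\sqrt{\lambda_0}$ into the generic constant; the $a$-dependence of $C$ enters only through $\|a\|_{L^\infty(\partial\Omega)}$, so $C=C(\zeta_0,a,\Sigma,U)$ and $p=p(\zeta_0,\Sigma,U)$, as required.

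The only genuinely nontrivial step is the first one: justifying the parameter-free Cauchy-data local interpolation (three-cylinder) estimate on $\Omega\times(0,1)$, i.e.\ upgrading \cite[Proposition A.3]{Bellassoued2023} to allow nonvanishing lateral data at the cost of the boundary term $\|v\|_{H^1(\Sigma\times(0,1))}$. Once that is in hand, the remainder is a verbatim repetition of the $\lambda$-bookkeeping performed for Theorem \ref{Thm2.2_CAC_2023}, with the extra factor $\sqrt\lambda$ coming from the impedance coefficient $ia\sqrt\lambda$.
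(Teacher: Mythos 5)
Your proposal is essentially the paper's own proof: apply the Cauchy-data (no vanishing trace) version of \cite[Proposition A.3]{Bellassoued2023} on the cylinder $\Omega\times(0,1)$, substitute $v=e^{\sqrt\lambda t}u$, use $u\in\mathscr{H}^\pm$ to convert $\partial_\nu v$ into $\pm ia\sqrt\lambda\,v$ on $\Sigma\times(0,1)$, factor out the $t$-integrals, and divide by $(\lambda^{-1/2}e^{\sqrt\lambda}\sinh(\sqrt\lambda/2))^{1/2}$ so that the ratio of the two $t$-factors produces $\mathbf{b}_\lambda$, exactly as in Theorem \ref{Thm2.2_CAC_2023}. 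The "Cauchy-data version" you flag as the sole nontrivial point is precisely what the paper cites directly from \cite[Proposition A.3]{Bellassoued2023}, so no genuinely different ingredient is involved.
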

\begin{proof}
According to \cite[Proposition A.3]{Bellassoued2023}, there exist constants $C=C(\zeta_0,\Sigma,U)>0$ and $p=p(\zeta_0,\Sigma,U)>0$ such that for any $\varepsilon>0$ and $v\in H^2(\Omega\times(0,1))$, we have
\begin{align*}
&C\|v\|_{H^1(U\times(1/4,3/4))}
\\
&\hskip 1cm \le \varepsilon^p\|v\|_{H^1(\Omega\times(0,1))}+\varepsilon^{-1}\left(\|(\Delta+\partial_t^2-q)v\|_{L^2(\Omega\times(0,1))}\right.
\\
&\hskip 5cm \left.+\|\partial_\nu v\|_{L^2(\Sigma\times(0,1))}+\|v\|_{H^1(\Sigma\times(0,1))}\right).
\end{align*}

Since $v:=e^{\sqrt{\lambda}t}u$ with $u\in \mathscr{H}^\pm$ satisfies $\partial_\nu v=\pm ia\sqrt{\lambda} e^{\sqrt{\lambda}t} u$ on $\Sigma\times(0,1)$ and
\begin{align*}
&C\left(\|(\Delta+\partial_t^2-q)v\|_{L^2(\Omega\times(0,1))}+\|\partial_\nu v\|_{L^2(\Sigma\times(0,1))}+\|v\|_{H^1(\Sigma\times(0,1))}\right)
\\
&\hskip 1cm \le \sqrt{\lambda}\left(\lambda^{-1/2}e^{\sqrt{\lambda}}\sinh\sqrt{\lambda}\right)^{1/2}\left(\|(\Delta-q+\lambda)u\|_{L^2(\Omega)}+\|u\|_{H^1(\Sigma)}\right),
\end{align*}
where $C$ depends also on $\lambda_0$ and $a$, we complete the proof in the same way as that of Theorem \ref{Thm2.2_CAC_2023}.
\end{proof}

\section{Complex geometric optics solutions}\label{sectionC}

Let $X$ be a bounded domain of $\mathbb{R}^n$ and define for $\lambda\ge 0$ and $q\in L^\infty(X)$
\[
\mathscr{S}_{q,\lambda}^0=\left\{u\in H^2(X);\; (-\Delta+q-\lambda)u=0\quad \text{in}\; X\right\}.
\]

Fix $\kappa>0$ arbitrarily. The complex geometric optics solutions we used are contained in the following proposition (see also \cite[Proposition 6.1]{Choulli2023a} and \cite[Proposition C.2]{Krupchyk2019}).

\begin{prop}\label{Prop6.1_CAC_2023}
There exist constants $\varpi=\varpi(n,X,\kappa)\ge 1$, $C=C(n,X,\kappa)>0$ and $\varkappa=\varkappa(n,X,\kappa)>0$ such that for all $\lambda\ge 1$, $q\in L^\infty(\Omega)$ satisfying $\|q\|_{L^\infty(X)}\le \kappa$ and $\xi\in\mathbb{C}^n$ so that $\xi\cdot\xi=\lambda$ and $|\Im\xi|\ge\varpi$ we find 
\[
u_\xi=e^{-ix\cdot\xi}(1+w_\xi)\in\mathscr{S}_{q,\lambda}^0
\] 
in such a way that
\begin{align*}
&C\|w_\xi\|_{L^2(X)}\le |\Im\xi|^{-1},
\\
&C\|u_\xi\|_{L^2(X)}\le e^{\varkappa|\Im\xi|},
\\
&C\|u_\xi\|_{H^2(X)}\le \lambda e^{\varkappa|\Im\xi|}.
\end{align*}
\end{prop}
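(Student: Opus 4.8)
The plan is to construct $u_\xi$ as a Sylvester--Uhlmann complex geometric optics solution, following the scheme of \cite[Proposition 6.1]{Choulli2023a} and \cite[Proposition C.2]{Krupchyk2019}. First I would conjugate: writing $u_\xi=e^{-ix\cdot\xi}(1+w)$ and using $\xi\cdot\xi=\lambda$, the identity
\[
(-\Delta+q-\lambda)\big(e^{-ix\cdot\xi}v\big)=e^{-ix\cdot\xi}\big(-\Delta v+2i\xi\cdot\nabla v+(\xi\cdot\xi-\lambda)v+qv\big)
\]
reduces the problem to finding $w=w_\xi$ solving $L_\xi^0 w+qw=-q$ in $X$, where $L_\xi^0:=-\Delta+2i\xi\cdot\nabla$.

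The analytic heart of the argument is a right inverse of $L_\xi^0$ whose norm gains a factor $|\Im\xi|^{-1}$. I would invoke the standard weighted estimates of Sylvester--Uhlmann for the Fourier multiplier with symbol $m_\xi(k)^{-1}$, $m_\xi(k):=|k|^2-2\xi\cdot k$: there are $\varpi_0=\varpi_0(n)\ge1$ and $C=C(n)$ such that, for a fixed $-1<\delta<0$ and all $|\Im\xi|\ge\varpi_0$, this multiplier defines $G_\xi\colon L^2_{\delta+1}(\mathbb{R}^n)\to L^2_\delta(\mathbb{R}^n)$ with $L_\xi^0 G_\xi=I$ and $\|G_\xi f\|_{L^2_\delta}\le C|\Im\xi|^{-1}\|f\|_{L^2_{\delta+1}}$. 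The key point, which I expect to be the main obstacle, is that the characteristic set $\{m_\xi=0\}$ is a compact $(n-2)$-sphere near which $\Im m_\xi=-2\,\Im\xi\cdot k$ is transverse with $|\nabla\Im m_\xi|=2|\Im\xi|$, so the division estimate is uniform in $\Re\xi$ and hence in $\lambda$; this is exactly the delicate Faddeev-type Green function bound and it forces one to work in the weighted spaces $L^2_\delta$ rather than in $L^2(\mathbb{R}^n)$. Composing with zero extension of $L^2(X)$-functions and restriction to $X$, on which the weights $(1+|x|^2)^{\pm\delta/2}$ are comparable to constants, would give $G_\xi\in\mathscr{B}(L^2(X))$ with $\|G_\xi\|\le C_X|\Im\xi|^{-1}$; writing $|k|^2=m_\xi(k)+2\xi\cdot k$ and using the companion bounds for $k^\alpha m_\xi(k)^{-1}$, I would also record the crude polynomial bounds $\|G_\xi f\|_{H^j(X)}\le C_X(1+|\xi|)^j\|f\|_{L^2(X)}$, $j=1,2$, which are all that is needed for the higher-order norms.

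I would then solve $L_\xi^0 w+qw=-q$ by contraction: setting $T_\xi w:=-G_\xi\big(q(1+w)\big)$ and choosing $\varpi\ge\varpi_0$ so large that $C_X\kappa/|\Im\xi|\le\tfrac12$ for $|\Im\xi|\ge\varpi$, the map $T_\xi$ is a $\tfrac12$-contraction on $L^2(X)$, with a unique fixed point $w_\xi$; the fixed-point identity gives $\|w_\xi\|_{L^2(X)}\le C_X\kappa|\Im\xi|^{-1}(|X|^{1/2}+\|w_\xi\|_{L^2(X)})$, hence $\|w_\xi\|_{L^2(X)}\le 2C_X\kappa|X|^{1/2}|\Im\xi|^{-1}$, which is the first asserted bound. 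Since $q(1+w_\xi)\in L^2(X)$ with norm $\le\kappa(|X|^{1/2}+1)$, the polynomial bounds give $w_\xi\in H^2(X)$ with $\|w_\xi\|_{H^2(X)}\le C_X(1+|\xi|)^2$, so $u_\xi=e^{-ix\cdot\xi}(1+w_\xi)\in\mathscr{S}_{q,\lambda}^0$. For the last two bounds I would use that on $X$ one has $|e^{-ix\cdot\xi}|=e^{x\cdot\Im\xi}\le e^{R|\Im\xi|}$ with $R:=\sup_{x\in\overline X}|x|$, so $\|u_\xi\|_{L^2(X)}\le e^{R|\Im\xi|}(|X|^{1/2}+\|w_\xi\|_{L^2(X)})\le Ce^{R|\Im\xi|}$, and by the Leibniz rule (each derivative falling on $e^{-ix\cdot\xi}$ costs a factor $\le|\xi|$) together with $\|w_\xi\|_{H^2(X)}\le C_X(1+|\xi|)^2$,
\[
\|u_\xi\|_{H^2(X)}\le Ce^{R|\Im\xi|}\big((1+|\xi|)^2\|1+w_\xi\|_{L^2(X)}+(1+|\xi|)\|\nabla w_\xi\|_{L^2(X)}+\|w_\xi\|_{H^2(X)}\big)\le C(1+|\xi|)^2e^{R|\Im\xi|}.
\]
Finally, $|\xi|^2=|\Re\xi|^2+|\Im\xi|^2=2|\Im\xi|^2+\lambda\le\lambda(2|\Im\xi|^2+1)$ because $\lambda\ge1$, so $(1+|\xi|)^2e^{R|\Im\xi|}\le C\lambda(|\Im\xi|^2+1)e^{R|\Im\xi|}$; choosing $\varkappa:=R+1$ and enlarging $\varpi$ so that $C(|\Im\xi|^2+1)e^{R|\Im\xi|}\le e^{\varkappa|\Im\xi|}$ and $Ce^{R|\Im\xi|}\le e^{\varkappa|\Im\xi|}$ for $|\Im\xi|\ge\varpi$ absorbs the polynomial factors and delivers $C\|u_\xi\|_{L^2(X)}\le e^{\varkappa|\Im\xi|}$ and $C\|u_\xi\|_{H^2(X)}\le\lambda e^{\varkappa|\Im\xi|}$.

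In short, apart from the Sylvester--Uhlmann weighted estimate for $G_\xi$ with a gain uniform in $\lambda$ — the single nontrivial ingredient, whose subtlety is the nonempty characteristic variety of $m_\xi$ — the remaining steps (the conjugation identity, the contraction, the crude elliptic/polynomial bounds for $w_\xi$, and the exponential bookkeeping) are routine, and the whole construction can be transcribed almost verbatim from \cite{Choulli2023a, Krupchyk2019}.
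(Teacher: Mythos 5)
Your proof is correct and arrives at the three stated bounds, but the technical engine driving the key gain $\|G_\xi\|\lesssim|\Im\xi|^{-1}$ is different from the one used in the paper. You go through the Sylvester--Uhlmann weighted estimates for the Faddeev symbol $m_\xi(k)^{-1}$ on $L^2_\delta(\mathbb{R}^n)$, then transfer to $L^2(X)$ via zero-extension and the boundedness of $X$, together with a bootstrap for the polynomial $H^j$ bounds. The paper instead invokes a H\"{o}rmander-type parametrix on the bounded domain directly (Choulli's \cite[Th\'{e}or\`{e}me 2.3]{Choulli2009}): an operator $E_\xi\in\mathscr{B}(L^2(X))$ with $P_\xi(D)E_\xi=I$ and $\|E_\xi\|_{\mathscr{B}(L^2(X))}\le c_0\sup_\eta\widetilde{P}_\xi(\eta)^{-1}$, where $\widetilde{P}_\xi(\eta)^2=\sum_{|\alpha|\le 2}|D^\alpha P_\xi(\eta)|^2$. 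That route makes the $\lambda$-uniformity of the constant completely automatic, since $\widetilde{P}_\xi(\eta)\ge\big(\sum_j|D_jP_\xi(\eta)|^2\big)^{1/2}=2|\eta-\xi|\ge2|\Im\xi|$ with no reference to $\Re\xi$ at all, whereas in your route the uniformity in $\Re\xi$ (hence $\lambda$) of the Sylvester--Uhlmann constant is exactly the subtle point you flag and would have to verify carefully (it is true when $\Re\xi\perp\Im\xi$, which holds here since $\xi\cdot\xi$ is real, but it is not the case usually proved in the $\xi\cdot\xi=0$ Calder\'{o}n setting). The paper also gets the $H^2(X)$ bound for $E_\xi$ directly from the quotients $|Q_j(\eta)|/\widetilde{P}_\xi(\eta)$ and $|Q_{j,k}(\eta)|/\widetilde{P}_\xi(\eta)$, landing on $\|E_\xi f\|_{H^2(X)}\lesssim\lambda|\Im\xi|\|f\|_{L^2(X)}$, which is sharper in the $\lambda$-dependence than your crude $(1+|\xi|)^2$; in both cases the surplus polynomial in $|\Im\xi|$ is swallowed into $e^{\varkappa|\Im\xi|}$, so the looseness is harmless for the final statement. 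After the parametrix is in hand, the contraction argument and the exponential bookkeeping are identical to the paper's.
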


\begin{proof}
Let $\xi\in\mathbb{C}^n$ satisfying $\xi\cdot\xi=\lambda$. We first note that if $u_\xi=e^{-ix\cdot\xi}(1+w_\xi)\in \mathscr{S}_{q,\lambda}^0$, then $w_\xi$ must satisfies 
\[
-\Delta w_\xi+2i\xi\cdot \nabla w_\xi=-q(1+w_\xi)\quad \text{in}\; X.
\]

Before solving this equation, we introduce several notations. Set $D_j:=-i\partial_j$, $D:=(D_1,\ldots,D_n)$ and $P_\xi(\eta):=\eta\cdot\eta-2\xi\cdot\eta$, $\eta=(\eta^1,\ldots,\eta^n)\in\mathbb{R}^n$ so that $P_\xi(D)=-\Delta+2i\xi\cdot\nabla$. Let 
\[
\widetilde{P}_\xi(\eta):=\left(\sum_{|\alpha|\le 2}|D^\alpha P_\xi(\eta)|^2\right)^{1/2}.
\]

Also, for $1\le j,k\le n$, set $Q_j(\eta):=\eta^j$ and $Q_{j,k}(\eta):=\eta^j\eta^k$ and note that $|Q_k(\eta)|\le|\eta|$ and $|Q_{j,k}(\eta)|\le |\eta|^2$ hold. For $P_\xi(\eta)=\eta\cdot\eta-2\xi\cdot\eta$, we have
\begin{align*}
\widetilde{P}_\xi(\eta)^2\ge |P_\xi(\eta)|^2+\sum_j|D_j P_\xi(\eta)|^2&\ge \left(|\eta|^2-2|\Re\xi||\eta|\right)^2+4|\Im\xi|^2
\\
&\ge\begin{cases}|\eta|^4/4,\quad &4|\Re\xi|< |\eta|,
\\
4|\Im\xi|^2,\quad &4|\Re\xi|\ge |\eta|.
\end{cases}
\end{align*}
Assume that $|\Im\xi|\ge 1$. As $\xi\cdot\xi=\lambda$ and $\lambda\ge 1$, we have  
\[
|\Im\xi|\le |\Im\xi|^2\le |\Re\xi|^2=|\Im\xi|^2+\lambda\le (|\Im\xi|+\sqrt{\lambda})^2\le 4\lambda|\Im\xi|^2.
\]
Then, we find 
\begin{align}
&\sup_{\eta\in\mathbb{R}^n}\frac{|Q_j(\eta)|}{\widetilde{P}_\xi(\eta)}\le\max((2|\Re\xi|)^{-1},2|\Re\xi|)= 2|\Re\xi|\le 4\lambda|\Im\xi| \label{y1}
\\
&\sup_{\eta\in\mathbb{R}^n}\frac{|Q_{j,k}(\eta)|}{\widetilde{P}_\xi(\eta)}\le \max(2,8|\Re\xi|^2/|\Im\xi|)=8|\Re\xi|^2/|\Im\xi|\le 32\lambda |\Im\xi|. \label{y2}
\end{align}

By \cite[Th\'{e}or\`{e}me 2.3]{Choulli2009} there exists $E_\xi\in \mathscr{B}(L^2(X))$ and $c_0=c_0(n,X)>0$ such that
\[
P_\xi(D)E_\xi f=f,\quad f\in L^2(X)
\]
and
\begin{equation}\label{y3}
\|E_\xi\|_{\mathscr{B}(L^2(X))}\le c_0\sup_{\eta\in\mathbb{R}^n}\frac{1}{\widetilde{P}_\xi(\eta)}\le c_0\sup_{\eta\in\mathbb{R}^n}\frac{1}{\sqrt{\sum_j|D_j P_\xi(\eta)|^2}}=\frac{c_0}{2|\Im\xi|}.
\end{equation}
Furthermore, in light of \eqref{y1} and \eqref{y2}, for all $f\in L^2(X)$, $E_\xi f\in H^2(X)$ and
\begin{equation}\label{y4}
\|E_\xi f\|_{H^2(X)}\le 32c_0\lambda |\Im\xi|\|f\|_{L^2(X)}.
\end{equation}

Next, consider the operator
\[
F_\xi\colon L^2(X)\ni f\mapsto E_\xi[-q(1+f)]\in L^2(X).
\]
We have
\[
\|F_\xi f-F_\xi g\|_{L^2(X)}=\|E_\xi[-q(f-g)]\|_{L^2(X)}\le\frac{c_0\kappa}{2|\Im\xi|}\|f-g\|_{L^2(X)}.
\]
In consequence, if $|\Im\xi|\ge \varpi:=\max(c_0\kappa,1)(>c_0\kappa/2)$, then $F_\xi$ admits a unique fixed point $w_\xi\in L^2(X)$. Therefore, $w_\xi =E_\xi[-q(1+w_\xi)]\in H^2(X)$ and applying $P_\xi(D)$ to the both sides of the identity $w_\xi =E_\xi[-q(1+w_\xi)]$ yields
\[
-\Delta w_\xi+2i\xi\cdot\nabla w_\xi=P_\xi(D)E_\xi[-q(1+w_\xi)]=-q(1+w_\xi)\in L^2(X).
\]
Using \eqref{y3}, we deduce
\begin{align*}
\|w_\xi\|_{L^2(X)}=\|F_\xi w_\xi\|_{L^2(X)}&\le\frac{c_0\kappa}{2|\Im\xi|}\|1+w_\xi\|_{L^2(X)}
\\
&\le \frac{c_0\kappa}{2|\Im\xi|}\left(\sqrt{|X|}+\|w_\xi\|_{L^2(X)}\right)
\end{align*}
and as $c_0\kappa\le|\Im\xi|$, we obtain
\begin{equation}\label{L^2_estimate}
\|w_\xi\|_{L^2(X)}\le\frac{c_0\kappa\sqrt{|X|}}{2|\Im\xi|-c_0\kappa}\le\frac{c_0\kappa\sqrt{|X|}}{|\Im\xi|}.
\end{equation}
On the other hand, \eqref{y4} implies
\[
\|w_\xi\|_{H^2(X)}=\|E_\xi(-q(1+w_\xi))\|_{H^2(X)}\le 32c_0\lambda |\Im\xi|\|q(1+w_\xi)\|_{L^2(X)}.
\]
This and \eqref{L^2_estimate} imply
\begin{equation}\label{y5}
\|w_\xi\|_{H^2(X)}\le c_1\lambda|\Im\xi|,
\end{equation}
for some constant $c_1=c_1(n,X,\kappa)>0$.

Finally, by construction $u_\xi=e^{-ix\cdot\xi}(1+w_\xi)\in \mathscr{S}_{q,\lambda}^0$ and, using \eqref{L^2_estimate} and \eqref{y5}, we obtain the expected inequalities.
\end{proof}

\section{Boundedness of a multiplication operator}

\begin{lem}\label{multiplication_operator}
Let $j=1,2$, $\Omega\subset\mathbb{R}^n$ be a $C^{j-1,1}$ bounded domain and $\psi\in C^{j-1,1}(\overline{\Omega})$. Then, the multiplication operator
\[
H^{j-1/2}(\partial\Omega)\ni f\mapsto \psi_{|\partial\Omega}f\in H^{j-1/2}(\partial\Omega)
\]
is bounded.
\end{lem}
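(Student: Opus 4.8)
The plan is to reduce the assertion to a multiplier estimate on $\mathbb{R}^{n-1}$ by localization, and then prove that estimate by hand, treating $j=1$ first and bootstrapping to $j=2$. Since $\partial\Omega$ is a compact $C^{j-1,1}$ hypersurface, I would fix a finite atlas $\{(V_i,\phi_i)\}$ of $C^{j-1,1}$ charts $\phi_i\colon V_i\to W_i\subset\mathbb{R}^{n-1}$, a subordinate partition of unity $\{\chi_i\}$ with $\chi_i\circ\phi_i^{-1}\in C_0^\infty(W_i)$, and cut-offs $\widetilde\chi_i\in C_0^\infty(V_i)$ with $\widetilde\chi_i\equiv1$ on $\mathrm{supp}\,\chi_i$. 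Because the transition maps of such an atlas are $C^{j-1,1}$ and $0\le j-1/2\le j$, the space $H^{j-1/2}(\partial\Omega)$ is well defined through this atlas and
\[
\|f\|_{H^{j-1/2}(\partial\Omega)}^2\ \sim\ \sum_i\big\|(\widetilde\chi_i f)\circ\phi_i^{-1}\big\|_{H^{j-1/2}(\mathbb{R}^{n-1})}^2 .
\]
Writing $\psi f=\sum_i(\chi_i\psi)(\widetilde\chi_i f)$ and pushing each term forward by $\phi_i$, one obtains $(\chi_i\psi\,\widetilde\chi_i f)\circ\phi_i^{-1}=\Phi_i\,g_i$, where $\Phi_i:=(\chi_i\psi)\circ\phi_i^{-1}$ is a compactly supported function of class $C^{j-1,1}(\mathbb{R}^{n-1})$ (a product/composition of $C^{j-1,1}$ maps) and $g_i:=(\widetilde\chi_i f)\circ\phi_i^{-1}\in H^{j-1/2}(\mathbb{R}^{n-1})$ with $\|g_i\|_{H^{j-1/2}(\mathbb{R}^{n-1})}\lesssim\|f\|_{H^{j-1/2}(\partial\Omega)}$. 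Thus it suffices to prove that multiplication by a compactly supported $\Phi\in C^{j-1,1}(\mathbb{R}^{n-1})$ is bounded on $H^{j-1/2}(\mathbb{R}^{n-1})$.

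For $j=1$ I would argue directly from the Gagliardo characterization $\|u\|_{H^{1/2}(\mathbb{R}^{n-1})}^2\sim\|u\|_{L^2}^2+[u]^2$, where $[u]^2=\iint_{\mathbb{R}^{n-1}\times\mathbb{R}^{n-1}}|u(x)-u(y)|^2|x-y|^{-n}\,dx\,dy$. Using $|(\Phi u)(x)-(\Phi u)(y)|^2\le 2|\Phi(x)|^2|u(x)-u(y)|^2+2|u(y)|^2|\Phi(x)-\Phi(y)|^2$, the first term integrates to at most $2\|\Phi\|_\infty^2[u]^2$; for the second term I would split the $x$-integral into $|x-y|<1$ and $|x-y|\ge1$, using $|\Phi(x)-\Phi(y)|\le\mathrm{Lip}(\Phi)|x-y|$ together with $\int_{|z|<1}|z|^{2-n}\,dz<\infty$ on the near region and $|\Phi(x)-\Phi(y)|\le2\|\Phi\|_\infty$ together with $\int_{|z|\ge1}|z|^{-n}\,dz<\infty$ on the far region, so that this contribution is $\lesssim\|\Phi\|_{C^{0,1}}^2\|u\|_{L^2}^2$. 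Combined with the trivial bound $\|\Phi u\|_{L^2}\le\|\Phi\|_\infty\|u\|_{L^2}$ this yields $\|\Phi u\|_{H^{1/2}}\lesssim\|\Phi\|_{C^{0,1}}\|u\|_{H^{1/2}}$.

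For $j=2$ I would deduce the claim from the case $j=1$ using the equivalence $\|u\|_{H^{3/2}}\sim\|u\|_{L^2}+\|\nabla u\|_{H^{1/2}}$ (immediate from $(1+|\xi|^2)^{3/2}\sim1+|\xi|^2(1+|\xi|^2)^{1/2}$ on the Fourier side) together with the Leibniz identity $\nabla(\Phi u)=(\nabla\Phi)u+\Phi\nabla u$. Since $\Phi\in C^{1,1}$, we have $\nabla\Phi\in C^{0,1}$ with compact support, so the $j=1$ estimate gives $\|(\nabla\Phi)u\|_{H^{1/2}}\lesssim\|u\|_{H^{1/2}}\lesssim\|u\|_{H^{3/2}}$ and $\|\Phi\nabla u\|_{H^{1/2}}\lesssim\|\nabla u\|_{H^{1/2}}\lesssim\|u\|_{H^{3/2}}$, while $\|\Phi u\|_{L^2}\le\|\Phi\|_\infty\|u\|_{L^2}$; adding the three bounds gives $\|\Phi u\|_{H^{3/2}}\lesssim\|\Phi\|_{C^{1,1}}\|u\|_{H^{3/2}}$.

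The only genuinely delicate ingredient is the localization step: it rests on the standard but nontrivial fact that composition with a $C^{k,1}$ diffeomorphism of $\mathbb{R}^{n-1}$ preserves $H^s$ for $0\le s\le k+1$ — equivalently, that $H^{j-1/2}(\partial\Omega)$ is a well-defined, chart-independent space once $\partial\Omega\in C^{j-1,1}$ — which is exactly what allows the Gagliardo and Leibniz computations above to be carried out one chart at a time. Everything else is elementary.
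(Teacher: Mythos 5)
Your proof is correct, but it takes a genuinely different route from the paper's. The paper argues ``from the bulk'': it invokes the trace theorem for $C^{j-1,1}$ domains (Grisvard, Theorem~1.5.1.2), which says $\gamma_j\colon H^j(\Omega)\to H^{j-1/2}(\partial\Omega)$ is bounded with a bounded right inverse, so that the natural norm on $H^{j-1/2}(\partial\Omega)$ is equivalent to the quotient norm $\inf\{\|F\|_{H^j(\Omega)} : \gamma_j F=f\}$; it then picks the minimal-norm extension $F_f$ via the projection theorem, multiplies by $\psi$ inside $\Omega$ using the elementary bulk bound $\|\psi F_f\|_{H^j(\Omega)}\lesssim\|\psi\|_{C^{j-1,1}(\overline{\Omega})}\|F_f\|_{H^j(\Omega)}$, and traces back. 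You instead stay on the boundary: you localize to $\mathbb{R}^{n-1}$ with a partition of unity and prove the multiplier estimate there directly, via the Gagliardo seminorm for $j=1$ and the reduction $\|u\|_{H^{3/2}}\sim\|u\|_{L^2}+\|\nabla u\|_{H^{1/2}}$ plus Leibniz for $j=2$. The paper's route is shorter and pushes all the real-analysis work into the trace theorem as a black box; yours is more self-contained at the level of estimates but needs the chart-invariance of $H^{j-1/2}$ under $C^{j-1,1}$ diffeomorphisms, which you correctly flag as the one nontrivial input. Both arguments exploit that $j-1/2\le j$, so the relevant Sobolev index sits below the available regularity of the boundary and of $\psi$, and both yield the same conclusion with a constant proportional to $\|\psi\|_{C^{j-1,1}(\overline{\Omega})}$.
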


\begin{proof}
Let $j=1,2$. From \cite[Theorem 1.5.1.2]{Grisvard1985}, the mapping $C^{j-1,1}(\overline{\Omega})\ni u\mapsto u_{|\partial\Omega}$ has the bounded extension, denoted by $\gamma_j\colon H^j(\Omega)\rightarrow H^{j-1/2}(\partial\Omega)$. This operator $\gamma_j$ has right continuous inverse. In particular, we deduce that the natural norm of $H^{j-1/2}(\partial\Omega)$ is equivalent to the following quotient norm
\[
\|f\|_{H^{j-1/2}(\partial\Omega)}:=\inf_{F\in\dot{f}}\|F\|_{H^j(\Omega)},
\]
where $\dot{f}=\{F\in H^j(\Omega);\; \gamma_j(F)=f\}$.

Let $f\in H^{j-1/2}(\partial\Omega)$. Since $\dot{f}$ is a nonempty closed convex subset of $H^j(\Omega)$, according to the projection theorem, there exists a unique $F_f\in\dot{f}$ such that
\[
\inf_{F\in\dot{f}}\|F\|_{H^j(\Omega)}=\inf_{F\in\dot{f}}\|F-0\|_{H^j(\Omega)}=\|F_f\|_{H^j(\Omega)}.
\]
In other words, we have
\[
\|f\|_{H^{j-1/2}(\partial\Omega)}=\|F_f\|_{H^j(\Omega)}.
\]

By density, using that $\gamma_j(\psi u)=\gamma_j(\psi)\gamma_j(u)$ for all $u\in C^{j-1,1}(\overline{\Omega})$, we obtain
\[
\gamma_j(\psi u)=\gamma_j(\psi)\gamma_j(u),\quad u\in H^j(\Omega).
\]
If $f\in H^{j-1/2}(\partial\Omega)$, then
\[
\gamma_j(\psi)f=\gamma_j(\psi)\gamma_j(F_f)=\gamma_j(\psi F_f).
\]
Whence, we have
\begin{align*}
\|\gamma_j(\psi)f\|_{H^{j-1/2}(\partial\Omega)}\le \|\gamma_j\|\|\psi F_f\|_{H^j(\Omega)}&\le c_0\|\gamma_j\|\|\psi\|_{C^{j-1,1}(\overline{\Omega})}\|F_f\|_{H^j(\Omega)}
\\
&=c_0\|\gamma_j\|\|\psi\|_{C^{j-1,1}(\overline{\Omega})}\|f\|_{H^{j-1/2}(\partial\Omega)},
\end{align*}
where $\|\gamma_j\|$ is the norm of $\gamma_j$ in $\mathscr{B}(H^j(\Omega);H^{j-1/2}(\partial\Omega))$ and $c_0>0$ is an universal constant.
\end{proof}

\section*{Acknowledgement}

This work was supported by JSPS KAKENHI Grant Numbers JP25K17280, JP23KK0049.

\section*{Declarations}
\subsection*{Conflict of interest}
The authors declare that they have no conflict of interest.

\subsection*{Data availability}
Data sharing not applicable to this article as no datasets were generated or analyzed during the current study.

\printbibliography

@book{Lions1972I,
   author = {J. L. Lions and E. Magenes},
   city = {Berlin, Heidelberg},
   isbn = {9783662117750},
   journal = {Vorlesungen über numerisches Rechnen},
   pages = {xvi+357 pp},
   publisher = {Springer-Verlag, New York-Heidelberg},
   title = {Non-Homogeneous Boundary Value Problems and Applications},
   volume = {I},
   year = {1972}
}

@article{Bellassoued2023,
   abstract = {We discuss a Cauchy problem for anisotropic wave equations. Precisely, we address the question to know which kind of Cauchy data on the lateral boundary are necessary to guarantee the uniqueness of continuation of solutions of an anisotropic wave equation. In the case where the uniqueness holds, the natural problem that arise naturally in this context is to estimate the solutions, in some appropriate space, in terms of norms of the Cauchy data. We aim in this paper to convert, via a reduced Fourier–Bros–Iagolnitzer transform, the known stability of the Cauchy problem for anisotropic elliptic equations to stability of a Cauchy problem for anisotropic waves equations. By proceeding in that way one the main difficulties is to control the residual terms, induced by the reduced Fourier–Bros–Iagolnitzer transform, by a Cauchy data. Also, a uniqueness of continuation result, from Cauchy data, is obtained as byproduct of stability results.},
   author = {M. Bellassoued and M. Choulli},
   doi = {10.1007/s42985-023-00242-2},
   issn = {26622971},
   journal = {Partial Differential Equations and Applications},
   keywords = {Anisotropic wave equations,Cauchy problem,Reduced Fourier–Bros–Iagolnitzer transform,Stability estimate,Uniqueness of continuation},
   pages = {44 pp},
   publisher = {Springer International Publishing},
   title = {Global logarithmic stability of a Cauchy problem for anisotropic wave equations},
   volume = {4(3), No. 23},
   year = {2023}
}

@article{Choulli2020,
   abstract = {We prove the global logarithmic stability of the Cauchy problem for -solutions of an anisotropic elliptic equation in a Lipschitz domain. The result is based on existing techniques used to establish stability estimates for the Cauchy problem combined with related tools used to study an inverse medium problem.},
   author = {M. Choulli},
   doi = {10.1017/S0004972719000789},
   issn = {17551633},
   journal = {Bulletin of the Australian Mathematical Society},
   keywords = {Cauchy problem,elliptic equation,global logarithmic stability},
   pages = {141-145},
   publisher = {Cambridge University Press},
   title = {New global logarithmic stability results on the Cauchy problem for elliptic equations},
   volume = {101(1)},
   year = {2020}
}

@book{Choulli2009,
   author = {M. Choulli},
   city = {Berlin},
   pages = {xxii+249 pp},
   publisher = {Springer-Verlag, Berlin},
   title = {Une introduction aux problèmes inverses elliptiques et paraboliques},
   year = {2009}
}

@article{Choulli2023a,
   author = {M. Choulli},
   doi = {10.3934/cac.2023012},
   issn = {2837-0562},
   journal = {Communications on Analysis and Computation},
   pages = {214-233},
   title = {Stability of determining the potential from partial boundary data in a Schrödinger equation in the high frequency limit},
   volume = {1(3)},
   url = {https://www.aimsciences.org//article/doi/10.3934/cac.2023012},
   year = {2023}
}

@article{Ruland2019,
   abstract = {In this short note, we provide a quantitative version of the classical Runge approximation property for second-order elliptic operators. This relies on quantitative unique continuation results and duality arguments. We show that these estimates are essentially optimal. As a model application, we provide a new proof of the result from [8], [2] on stability for the Calderón problem with local data.},
   author = {A. Rüland and M. Salo},
   doi = {10.1093/imrn/rnx301},
   issn = {16870247},
   journal = {International Mathematics Research Notices},
   pages = {6216-6234},
   publisher = {Oxford University Press},
   title = {Quantitative Runge approximation and inverse problems},
   volume = {20},
   year = {2019}
}

@article{Krupchyk2019,
   abstract = {We consider the partial data inverse boundary problem for the Schrödinger operator at a frequency k>0 on a bounded domain in R n , n≥3, with impedance boundary conditions. Assuming that the potential is known in a neighborhood of the boundary, we first show that the knowledge of the partial Robin–to–Dirichlet map at the fixed frequency k>0 along an arbitrarily small portion of the boundary, determines the potential in a logarithmically stable way. We prove, as the principal result of this work, that the logarithmic stability can be improved to the one of Hölder type in the high frequency regime.},
   author = {K. Krupchyk and G. Uhlmann},
   doi = {10.1016/j.matpur.2019.02.017},
   issn = {00217824},
   journal = {Journal des Mathematiques Pures et Appliquees},
   keywords = {Carleman estimates,High frequency limit,Partial data inverse problems,Stability estimates},
   pages = {273-291},
   publisher = {Elsevier Masson SAS},
   title = {Stability estimates for partial data inverse problems for Schrödinger operators in the high frequency limit},
   volume = {126},
   year = {2019}
}

@article{Baskin2016,
   abstract = {We consider three problems for the Helmholtz equation in interior and exterior domains in ℝd (d = 2, 3): the exterior Dirichlet-to-Neumann and Neumann-to-Dirichlet problems for outgoing solutions, and the interior impedance problem. We derive sharp estimates for solutions to these problems that, in combination, give bounds on the inverses of the combined-field boundary integral operators for exterior Helmholtz problems.},
   author = {D. Baskin and E. A. Spence and J. Wunsch},
   doi = {10.1137/15M102530X},
   issn = {10957154},
   journal = {SIAM Journal on Mathematical Analysis},
   keywords = {Boundary integral equation,Helmholtz equation,High frequency,Scattering theory},
   pages = {229-267},
   publisher = {Society for Industrial and Applied Mathematics Publications},
   title = {Sharp high-frequency estimates for the Helmholtz equation and applications to boundary integral equations},
   volume = {48(1)},
   year = {2016}
}

@book{Grisvard1985,
   author = {P. Grisvard},
   pages = {xiv+410 pp},
   publisher = {Pitman (Advanced Publishing Program), Boston, MA},
   title = {Elliptic Problems in Nonsmooth Domains},
   url = {https://epubs.siam.org/terms-privacy},
   year = {1985}
}

@article{GarciaFerrero2022,
   author = {M. Á. García-Ferrero and A. Rüland and W. Zatoń},
   doi = {10.3934/ipi.2021049},
   issn = {1930-8337},
   journal = {Inverse Problems and Imaging},
   keywords = {Runge approximation,high frequency limit,partial data inverse problem,quantitative unique continuation,stability estimates},
   pages = {251-281},
   title = {Runge approximation and stability improvement for a partial data Calderón problem for the acoustic Helmholtz equation},
   volume = {16(1)},
   url = {https://www.aimsciences.org/article/doi/10.3934/ipi.2021049},
   year = {2022}
}

\end{document}